\documentclass{amsart}

\usepackage{graphicx,amsthm, amsmath, amssymb, mathrsfs}
\usepackage{mathtools}


\newtheorem{theorem}{Theorem}[section]

\newtheorem{prop}[theorem]{Proposition}

\newtheorem{lemma}[theorem]{Lemma}

\theoremstyle{definition}

\newtheorem{cor}[theorem]{Corollary}
\newtheorem{remark}[theorem]{Remark}

\numberwithin{equation}{section}

\def\Xint#1{\mathchoice
   {\XXint\displaystyle\textstyle{#1}}%
   {\XXint\textstyle\scriptstyle{#1}}%
   {\XXint\scriptstyle\scriptscriptstyle{#1}}%
   {\XXint\scriptscriptstyle\scriptscriptstyle{#1}}%
   \!\int}
\def\XXint#1#2#3{{\setbox0=\hbox{$#1{#2#3}{\int}$}
     \vcenter{\hbox{$#2#3$}}\kern-.5\wd0}}

\def\dashint{\Xint-}

\makeatletter
\def\moverlay{\mathpalette\mov@rlay}
\def\mov@rlay#1#2{\leavevmode\vtop{%
   \baselineskip\z@skip \lineskiplimit-\maxdimen
   \ialign{\hfil$\m@th#1##$\hfil\cr#2\crcr}}}
\newcommand{\charfusion}[3][\mathord]{
    #1{\ifx#1\mathop\vphantom{#2}\fi
        \mathpalette\mov@rlay{#2\cr#3}
      }
    \ifx#1\mathop\expandafter\displaylimits\fi}
\makeatother

\newcommand{\bigcupdot}{\charfusion[\mathop]{\bigcup}{\cdot}}
\newcommand{\R}{\mathbb R}
\newcommand{\al}{\alpha}
\newcommand{\M}{\mathcal{M}}

\begin{document}

\title[Bilinear estimates for general commutators]{Weighted estimates for bilinear fractional integral operators and their commutators}

\author{Cong Hoang}
\address{Cong Hoang, Department of Mathematics, University of Alabama, Tuscaloosa, AL 35487-0350}

\author{Kabe Moen}
\address{Kabe Moen, Department of Mathematics, University of Alabama, Tuscaloosa, AL 35487-0350}
\thanks{The second author was supported by NSF Grant \#1201504.}

\allowdisplaybreaks


\keywords{Bilinear operators, fractional operators, commutators, weighted inequalities, bump conditions}

\maketitle

\begin{abstract}
In this paper we will prove several weighted estimates for bilinear fractional integral operators and their commutators with BMO functions.  We also prove maximal function control theorem for these operators, that is, we prove the weighted $L^p$ norm is bounded by the weighted $L^p$ norm of a natural maximal operator when the weight belongs to $A_\infty$.  As a corollary we are able to obtain new weighted estimates for the bilinear maximal function associated to the bilinear Hilbert transform.

\hfill

\end{abstract}

\section{Introduction}

We are interested in the family of bilinear fractional integrals 
$$\mathsf{BI}_\al(f,g)(x)= \int_{\R^n}\frac{f(x-y)g(x+y)}{|y|^{n-\al}}\,dy, \qquad 0<\al<n.$$
The study of $\mathsf{BI}_{\alpha}$ was initiated by Kenig and Stein in \cite{C. E. Kenig and E. M. Stein} and Grafakos in \cite{L. Grafakos On multi} who proved that $\mathsf{BI}_{\alpha}:L^{p_1}\times L^{p_2}\rightarrow L^q$ when $1<p_1,p_2<\infty$ and $q$ satisfies $\frac1q=\frac{1}{p_1}+\frac{1}{p_2}-\frac{\al}{n}.$  The main interest of these operators is the singular nature of the kernel.  In fact, $\mathsf{BI}_\al$ has the same relationship to the bilinear Hilbert transform,
$$\mathsf{BH}(f,g)(x)=p.v.\int_{\mathbb{R}}\frac{f(x-y)\thinspace g(x+y)}{y}\thinspace dy$$
as the linear fractional integral has to the Hilbert transform.  We aim to study weighted norm inequalities of the form
$$
\mathsf{BI}_{\alpha}: L^{p_1}(v_1)\times L^{p_2}(v_2)\longrightarrow L^{q}(u).
$$
Weighted inequalities for these singular operators were unknown until the second author made some progress in  \cite{K. Moen bilinear} for the case when $p\leqslant q\leqslant 1$.   The main results of \cite{K. Moen bilinear} are stated in Theorem \ref{thmD} in the next section.

This paper was originally an attempt to expand the range for $p$ and $q$, but then as the theory developed, we were also interested in considering the effects of several types of commutators on $\mathsf{BI}_{\alpha}$. Given a linear operator $T$ and a function $b$, the commutator $[b,T]$ is defined to be
$$
[b,T]f=b \thinspace T(f)-T(bf).
$$
Coifman, Rochberg and Weiss \cite{R. R. Coifman and R. Rochberg and G. Weiss} introduced commutators of singular integral operators as a tool to extend the classical factorization theory of $H^p$ spaces. They proved that if $b\in BMO$ and $T$ is a singular integral operator, then $[b,T]$ in bounded on $L^p(\mathbb{R}^n)$ for $1<p<\infty$.

Weighted estimates for the linear fractional integral operator were done by Muckenhoupt and Wheeden \cite{B. Muckenhoupt and R.L. Wheeden} in the one weight case.  P\'erez \cite{CP} proved sufficient two weight bump conditions for the boundedness of $I_\al$.  The commutator of $I_\alpha$ was first considered by Chanillo \cite{S. Chanillo}, who showed that if $b\in BMO$, then $[b,I_\alpha]$ maps $L^p(\mathbb{R}^n)$ into $L^q(\mathbb{R}^n)$ with $\frac{1}{p}-\frac{1}{q}=\frac{\alpha}{n}$.  Weighted estimates for $[b,I_\al]$ were studied by D. Cruz-Uribe and the second author in \cite{D. Cruz-Uribe and K. Moen sharp norm for commutators} where it was shown that if $b \in BMO$, $1<p\leqslant q<\infty$ and $(u,v)$ is a pair of weights satisfying
$$
\sup_Q |Q|^{\frac{\alpha}{n}+\frac{1}{q}-\frac{1}{p}} \|u^\frac{1}{q}\|
_{A,Q}\| v^{-\frac{1}{p}}\|_{B,Q} < \infty
$$
where $A(t)=t^q\log(e+t)^{2q-1+\delta}$ and $B(t)=t^{p'}\log(e+t)^{2p'-1+\delta}$, we have
$$
\bigl\|[b,I_\alpha]\bigr\|_{L^q(u)} \lesssim \|b\|_{BMO} \|f\|_{L^p(v)}.
$$



When considering a bilinear operator $\mathsf{BT}$, we define the commutators on the first and the second components to be
$$
[b,\mathsf{BT}]_1(f,g)=b \thinspace \mathsf{BT}(f,g)-\mathsf{BT}(bf,g)
$$
and
$$
[b,\mathsf{BT}]_2(f,g)=b \thinspace \mathsf{BT}(f,g)-\mathsf{BT}(f,bg).
$$
Let $\vec{b}=(b_1,...,b_N)$, where $b_i$'s are given functions, and $\vec{\beta}=(\beta_1,...,\beta_N)\in\ \{1,2\}^N$, the iterated product commutators of a bilinear operator $\mathsf{BT}$ is defined (from inner to outer) to be
$$
[\vec{b},\mathsf{BT}]_{\vec{\beta}}=[b_N,[b_{N-1}...,[b_2,[b_1,\mathsf{BT}]_{\beta_1}]_{\beta_2}...]_{\beta_{N-1}}]_{\beta_N}.
$$
In the linear case, these type of commutators were studied by P\'erez and Rivera-Rios \cite{C. Perez and Israel P. Rivera-Rios}. Given a bilinear operator $\mathsf{BT}$, we may rearrange the commutators in any order as the following Proposition states.

\begin{prop}
For any permutation $\sigma$ on $\{1,...,N\}$,
\begin{equation} \label{eq11}
[\sigma(\vec{b}),\mathsf{BT}]_{\sigma(\vec{\beta})}=[\vec{b},\mathsf{BT}]_{\vec{\beta}}
\end{equation}
where $\sigma(\vec{b})=(b_{\sigma(1)},...,b_{\sigma(N)})$ and $\sigma(\vec{\beta})=(\beta_{\sigma(1)},...,\beta_{\sigma(N)})$. In particular, equality \eqref{eq11} is valid for any permutation $\sigma_0$ be such that $\sigma_0(\vec{\beta})=(1,...,1,2,...,2)$.
\end{prop}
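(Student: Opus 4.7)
My plan is to recast the iterated commutator as a composition of operators on the space of bilinear operators and to reduce the proposition to a purely algebraic commutativity statement. For a function $b$ and an index $j\in\{1,2\}$, let $L_{b,j}$ act on any bilinear operator $T$ by
\begin{equation*}
L_{b,1}(T)(f,g) := b\, T(f,g) - T(bf, g), \qquad L_{b,2}(T)(f,g) := b\, T(f,g) - T(f, bg),
\end{equation*}
so that $[\vec{b},\mathsf{BT}]_{\vec\beta} = \bigl(L_{b_N,\beta_N}\circ\cdots\circ L_{b_1,\beta_1}\bigr)(\mathsf{BT})$. In this language, \eqref{eq11} is the statement that the composition is invariant under every reordering of its $N$ factors.

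Since the symmetric group $S_N$ is generated by adjacent transpositions, it suffices to check that $L_{b,j}$ and $L_{c,k}$ commute for every choice of functions $b,c$ and indices $j,k\in\{1,2\}$. This reduces to four cases $(j,k)\in\{(1,1),(1,2),(2,1),(2,2)\}$, each handled by expanding both compositions directly from the definitions. In each case one obtains a four-term alternating sum that is manifestly symmetric under exchanging the pairs $(b,j)\leftrightarrow(c,k)$; for example, the mixed case $(j,k)=(1,2)$ yields
\begin{equation*}
L_{b,1}L_{c,2}(T)(f,g) \;=\; bc\, T(f,g) - b\,T(f,cg) - c\,T(bf,g) + T(bf,cg),
\end{equation*}
and the same expression arises from $L_{c,2}L_{b,1}(T)(f,g)$; the remaining three cases are analogous.

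With pairwise commutativity in hand, any $\sigma\in S_N$ reorders the composition without changing its value, which is precisely \eqref{eq11}. The final assertion is then immediate: any sequence $\vec\beta\in\{1,2\}^N$ admits a permutation $\sigma_0$ that sorts its entries into nondecreasing order, thereby putting $\sigma_0(\vec\beta)$ in the block form $(1,\ldots,1,2,\ldots,2)$. The only step carrying any real content, and thus the main (though minor) obstacle, is the four-case verification of $L_{b,j}L_{c,k}=L_{c,k}L_{b,j}$; the rest is purely formal. A subtle point worth flagging is that a permutation must act on $\vec b$ and $\vec\beta$ simultaneously, but this pairing is built into the operator formulation and therefore preserved automatically.
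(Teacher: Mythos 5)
Your proof is correct. Worth noting: the paper states this proposition without proof (evidently treating it as routine), so there is no proof in the paper to compare against; your write-up fills that gap with the natural argument. Casting each single commutator as an operator $L_{b,j}$ on the space of bilinear operators, reducing via the generation of $S_N$ by adjacent transpositions to pairwise commutativity $L_{b,j}L_{c,k}=L_{c,k}L_{b,j}$, and then checking the four cases by direct expansion is exactly the right structure, and your explicit verification of the mixed case $(1,2)$ is correct; the case $(1,1)$ uses (scalar) commutativity $bc=cb$ of the multiplication operators but that is of course available here since $b,c$ are real-valued functions, and $(2,2)$, $(2,1)$ follow by the same computation with the argument roles swapped. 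Your concluding remark that any $\vec\beta\in\{1,2\}^N$ can be sorted into block form by some $\sigma_0$, and that the pairing of $\vec b$ with $\vec\beta$ is preserved automatically by the operator formulation, are both accurate and complete the argument.
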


For simplicity in the notation and proof, from now on we will always assume that $\vec{\beta}=(1,...,1,2,...,2)$, and reserve the notation $m=m(\vec{\beta})$ to denote the number of 1's in $\vec{\beta}$.


\section{Main Results}

Throughout this paper we will work with 2 different cases. The first case is when $p_1$ and $p_2$ get close enough to 1, which will force $p=\frac{p_1p_2}{p_1+p_2}\leqslant1$, while the second case is when $p>1$. Our departure is the following result of the second author \cite{K. Moen bilinear}.

\begin{theorem}[\cite{K. Moen bilinear}] \label{thmD}
Suppose $1<p_1,p_2$ and $q\leqslant1$ are such that $\frac{1}{2}<p=\frac{p_1p_2}{p_1+p_2}\leqslant q\leqslant1$. If $(u,v_1,v_2)$ are weights satisfying
\begin{equation*}
    \sup_Q |Q|^{\frac{\alpha}{n}+\frac{1}{q}-\frac{1}{p}} \left(\dashint_Qu^\frac{1}{1-q}\right)^\frac{1-q}{q}\bigl\|v_1^{-\frac{1}{p_1}}\bigr\|_{\phi_1,Q}\thinspace\bigl\|v_2^{-\frac{1}{p_2}}\bigr\|_{\phi_2,Q} < \infty
\end{equation*}
where $\phi_i(t)=t^{p_i'}\log(e+t)^{p_i'-1+\delta}$, $i\in\{1,2\}$, and $\left(\dashint_Qu^\frac{1}{1-q}\right)^{1-q}=\sup_Qu$ when $q=1$. Then, the inequality
$$
\bigl\|\mathsf{BI}_{\alpha}(f,g)\bigr\|_{L^q(u)} \lesssim \left\|f\right\|_{L^{p_1}(v_1)} \left\|g\right\|_{L^{p_2}(v_2)}
$$
holds for all $f\in L^{p_1}(v_1)$ and $g\in L^{p_2}(v_2)$.
\end{theorem}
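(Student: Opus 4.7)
My plan is a dyadic-sparse reduction followed by the Orlicz bump H\"older inequality, with the case $q\leqslant 1$ handled by pointwise $L^q$-subadditivity rather than duality. First, since $|y|^{\alpha-n}$ is essentially constant on dyadic annuli, a standard shifted-grid argument gives the pointwise bound
\begin{equation*}
\mathsf{BI}_\alpha(f,g)(x) \lesssim \sum_{\mathcal{D}}\sum_{Q\in\mathcal{D}} |Q|^{\alpha/n}\langle f\rangle_Q\langle g\rangle_Q\chi_Q(x),
\end{equation*}
where the outer sum ranges over finitely many translates of the standard dyadic grid. A Calder\'on-Zygmund stopping-time applied simultaneously to the averages of $f$ and $g$ then extracts a sparse subfamily $\mathcal{S}\subset\mathcal{D}$, with pairwise disjoint sets $E_Q\subset Q$ satisfying $|E_Q|\geqslant\tfrac{1}{2}|Q|$, dominating the dyadic fractional sum.

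Next, since $q\leqslant 1$, pointwise subadditivity of $t\mapsto t^q$ on nonnegative numbers gives
\begin{equation*}
\Bigl\|\sum_{Q\in\mathcal{S}}|Q|^{\alpha/n}\langle f\rangle_Q\langle g\rangle_Q\chi_Q\Bigr\|_{L^q(u)}^q \leqslant \sum_{Q\in\mathcal{S}}|Q|^{q\alpha/n}\langle f\rangle_Q^q\langle g\rangle_Q^q\, u(Q),
\end{equation*}
and H\"older's inequality with exponents $1/(1-q)$ and $1/q$ bounds $u(Q)$ by $|Q|\bigl(\dashint_Q u^{1/(1-q)}\bigr)^{1-q}$, producing exactly the $u$-factor in the hypothesis. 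The Orlicz H\"older inequality $\langle f\rangle_Q \leqslant 2\bigl\|v_1^{-1/p_1}\bigr\|_{\phi_1,Q}\bigl\|fv_1^{1/p_1}\bigr\|_{\bar\phi_1,Q}$ (and its analogue for $g$) together with the hypothesized bump condition raised to the $q$-th power absorbs every weight-dependent factor at scale $Q$, reducing matters to
\begin{equation*}
\sum_{Q\in\mathcal{S}}|Q|^{q/p}\bigl\|fv_1^{1/p_1}\bigr\|_{\bar\phi_1,Q}^q\bigl\|gv_2^{1/p_2}\bigr\|_{\bar\phi_2,Q}^q.
\end{equation*}

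I would handle this sparse sum by a Carleson embedding. Writing $|Q|^{q/p} = |Q|^{q/p_1}\cdot|Q|^{q/p_2}$ and applying H\"older's inequality in the summation with exponents $p_1/q$ and $p_2/q$ (valid in the critical case $p=q$, with an extra exponent absorbing the surplus $|Q|^{q/p-1}$ when $p<q$), each factor reduces to a standard sparse embedding $\sum_{Q\in\mathcal{S}}|Q|\bigl\|fv_1^{1/p_1}\bigr\|_{\bar\phi_1,Q}^{p_1} \lesssim \bigl\|M_{\bar\phi_1}(fv_1^{1/p_1})\bigr\|_{L^{p_1}}^{p_1}$ and its counterpart for $g$, the sparse structure being used through $|Q|\leqslant 2|E_Q|$ and the disjointness of $\{E_Q\}$. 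The specific choice $\phi_i(t)=t^{p_i'}\log(e+t)^{p_i'-1+\delta}$ guarantees that the complementary function $\bar\phi_i$ belongs to P\'erez's $B_{p_i}$ class, whence $M_{\bar\phi_i}\colon L^{p_i}\to L^{p_i}$ is bounded and the proof closes with the factors $\|f\|_{L^{p_1}(v_1)}^q\|g\|_{L^{p_2}(v_2)}^q$.

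The principal obstacle is that $q\leqslant 1$ eliminates the standard duality/testing approach to two-weight bump estimates. The subadditivity step combined with the weight-exchange H\"older inequality above is the replacement, and it is precisely this maneuver that produces the unusual $\bigl(\dashint_Q u^{1/(1-q)}\bigr)^{(1-q)/q}$ factor in the hypothesis rather than a Luxemburg norm of $u^{1/q}$. A secondary complication is extracting a sparse dominator for the bilinear dyadic fractional operator, which requires a stopping-time argument simultaneously controlling the averages of $f$ and $g$, and care is needed at the threshold $p=q$ where the H\"older step in the final Carleson embedding is critical.
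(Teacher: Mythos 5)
Your proposal is correct and follows essentially the same route the paper uses: the paper does not reprove Theorem \ref{thmD} (it is cited from \cite{K. Moen bilinear}), but its proof of the commutator generalization, Theorem \ref{thmA}, specializes when $N=0$ to exactly your argument --- dyadic/shifted-grid discretization, $L^q$-subadditivity with H\"older in the pair $\bigl(\tfrac1q,\tfrac{1}{1-q}\bigr)$ to isolate the $u$-factor, a simultaneous stopping time on $\|f\|_Q\|g\|_Q$ yielding the sparse family with $|Q_j^k|\leqslant 2|E_j^k|$, the Orlicz bump H\"older step, and the final $\ell^{p}\hookrightarrow\ell^{q}$ plus discrete H\"older with $\bigl(\tfrac{p_1}{p},\tfrac{p_2}{p}\bigr)$ and the $B_{p_i}$ maximal function bound. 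No gaps.
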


By using a different technique, we are able to prove a similar result in the case when $1< p\leqslant q<\infty$.

\begin{theorem} \label{thmE}
Suppose $0<\alpha<n$, $p_1>r>1$, $p_2>s>1$,  $\frac{1}{r}+\frac{1}{s}=1$, $\frac{1}{p}=\frac{1}{p_1}+\frac{1}{p_2}$, $1<p\leqslant q < \infty$, and the set of weights $(u,v_1,v_2)$ satisfies
\begin{equation*}
    \sup_Q |Q|^{\frac{\alpha}{n}+\frac{1}{q}-\frac{1}{p}}\bigl\|u^\frac{1}{q}\bigr\|_{\psi,Q}\thinspace\bigl\|v_1^{-\frac{r}{p_1}}\bigr\|_{\phi_1,Q}^\frac{1}{r}\thinspace\bigl\|v_2^{-\frac{s}{p_2}}\bigr\|_{\phi_2,Q}^\frac{1}{s} < \infty
\end{equation*}
where $\psi,\phi_1,\phi_2$ are Young functions satisfying $\bar{\psi}\in B_{q'}$, $\bar{\phi}_1\in B_{\frac{p_1}{r}}$ and $\bar{\phi}_2\in B_{\frac{p_2}{s}}$. Then the inequality
$$
\bigl\|\mathsf{BI}_{\alpha}(f,g)\bigr\|_{L^q(u)} \lesssim \left\|f\right\|_{L^{p_1}(v_1)} \left\|g\right\|_{L^{p_2}(v_2)}
$$
holds for all $f\in L^{p_1}(v_1)$ and $g\in L^{p_2}(v_2)$.
\end{theorem}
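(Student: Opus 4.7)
The plan is to approach Theorem~\ref{thmE} by duality and a sparse (or dyadic) domination of $\mathsf{BI}_\al$, followed by a generalized Orlicz H\"older argument tailored to the bump hypothesis. First, by $L^q$--$L^{q'}$ duality,
\[
\|\mathsf{BI}_\al(f,g)\|_{L^q(u)}=\sup_{\substack{h\geq 0,\\ \|h\|_{L^{q'}}=1}}\int_{\R^n}\mathsf{BI}_\al(f,g)(x)\,h(x)\,u(x)^{1/q}\,dx.
\]
Applying a standard dyadic/sparse pointwise domination of $\mathsf{BI}_\al$ by a bilinear fractional averaging operator over a sparse family $\mathcal{S}=\mathcal{S}(f,g)$, the problem reduces to controlling the trilinear form
\[
\Sigma:=\sum_{Q\in\mathcal{S}}|Q|^{\al/n+1}\langle f\rangle_Q\langle g\rangle_Q\langle hu^{1/q}\rangle_Q
\]
by $\|f\|_{L^{p_1}(v_1)}\|g\|_{L^{p_2}(v_2)}$.

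Setting $F=fv_1^{1/p_1}$ and $G=gv_2^{1/p_2}$, so that $\|F\|_{L^{p_1}}=\|f\|_{L^{p_1}(v_1)}$ and $\|G\|_{L^{p_2}}=\|g\|_{L^{p_2}(v_2)}$, I would apply Jensen with exponents $r$ and $s$ (exploiting the crucial conjugacy $\tfrac{1}{r}+\tfrac{1}{s}=1$) together with generalized Orlicz H\"older: on each cube $Q$,
\[
\langle f\rangle_Q\leq\langle f^r\rangle_Q^{1/r}=\langle F^r v_1^{-r/p_1}\rangle_Q^{1/r}\leq\|F^r\|_{\bar\phi_1,Q}^{1/r}\,\|v_1^{-r/p_1}\|_{\phi_1,Q}^{1/r},
\]
symmetrically for $\langle g\rangle_Q$ with $s$ in place of $r$, and $\langle hu^{1/q}\rangle_Q\leq\|h\|_{\bar\psi,Q}\|u^{1/q}\|_{\psi,Q}$. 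Multiplying these bounds cube by cube and absorbing the three weighted Orlicz averages via the bump hypothesis---which contributes the factor $\lesssim|Q|^{-\al/n-1/q+1/p}$---yields
\[
\Sigma\lesssim\sum_{Q\in\mathcal{S}}|Q|^{1+\frac{1}{p}-\frac{1}{q}}\|F^r\|_{\bar\phi_1,Q}^{1/r}\|G^s\|_{\bar\phi_2,Q}^{1/s}\|h\|_{\bar\psi,Q}.
\]

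To close the estimate I would use sparseness ($|E_Q|\sim|Q|$ with $\{E_Q\}$ pairwise disjoint) and the pointwise dominations $\|F^r\|_{\bar\phi_1,Q}\leq M_{\bar\phi_1}(F^r)(x)$ for $x\in E_Q$, and similarly for the other two averages, so the discrete sum collapses into an integral of the three Young-maximal functions. A three-function H\"older inequality in the Lebesgue exponents $p_1,p_2,q'$, combined with the P\'erez characterization of the $B_p$ condition---which under $\bar\phi_1\in B_{p_1/r}$, $\bar\phi_2\in B_{p_2/s}$, $\bar\psi\in B_{q'}$ gives the $L^{p_1/r}$, $L^{p_2/s}$, $L^{q'}$ boundedness of the corresponding Young-maximal operators---bounds the right-hand side by $\|F\|_{L^{p_1}}\|G\|_{L^{p_2}}\|h\|_{L^{q'}}$, which is exactly the desired estimate.

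The principal technical obstacle is absorbing the volume factor $|Q|^{1+1/p-1/q}$ in the sparse sum when $p<q$: in the diagonal case $p=q$ the exponent is precisely one, the sum converts cleanly via $|E_Q|\sim|Q|$, and H\"older closes because $1/p_1+1/p_2+1/q'=1$, but when $p<q$ there is an excess power $|Q|^{1/p-1/q}>0$ to be distributed. The careful introduction of the auxiliary conjugate pair $r,s$ with $p_1>r$, $p_2>s$ (together with the $B_{p_1/r}$ and $B_{p_2/s}$ hypotheses, which are weaker than what would be required in the fully diagonal estimate) is exactly designed to spread this gap across the three Young-maximal operators so the argument closes; verifying that the Jensen step and the resulting H\"older inequality are compatible with the prescribed $B_p$ conditions is the technical heart of the proof.
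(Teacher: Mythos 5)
Your overall strategy (duality, a dyadic/sparse domination, Orlicz--H\"older, discrete H\"older, $B_p$ maximal boundedness) matches the paper's (Theorem~\ref{thmE} is the $N=0$ specialization of the proof of Theorem~\ref{thmB}), but there is a genuine gap at the first step. The "standard sparse pointwise domination" you invoke, i.e.\ the claim that the dualized quantity is controlled by
$\Sigma=\sum_{Q\in\mathcal{S}}|Q|^{\alpha/n+1}\langle f\rangle_Q\langle g\rangle_Q\langle hu^{1/q}\rangle_Q$,
is false for $\mathsf{BI}_\alpha$. Unlike the Kenig--Stein bilinear fractional integral, $\mathsf{BI}_\alpha$ integrates over a single $y$-variable, and this diagonal structure destroys the $\langle f\rangle_Q\langle g\rangle_Q$ domination. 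Concretely, in $\R^1$ take $f=\chi_{[0,\epsilon]}$ and $g=\chi_{[1-\epsilon,1]}$: one computes $\mathsf{BI}_\alpha(f,g)(x)\sim\epsilon$ for $x$ near $1/2$, but any cube $Q\ni x$ with $\langle f\rangle_Q\langle g\rangle_Q\neq0$ must meet both supports and hence have $\ell(Q)\gtrsim1$, so $|Q|^{\alpha}\langle f\rangle_Q\langle g\rangle_Q\lesssim\epsilon^2$ and the sparse sum is $O(\epsilon^2)$, smaller by a factor of $\epsilon$. Consequently your Jensen step $\langle f\rangle_Q\leq\langle f^r\rangle_Q^{1/r}$ is enlarging a quantity that is not an upper bound in the first place, so the chain of inequalities does not start.

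The H\"older conjugacy $1/r+1/s=1$ is not there to run a Jensen inequality on cube averages; it is needed \emph{before} any averaging, to split the single $y$-integral. The correct route, as in the proof of Theorem~\ref{thmB}, is to first dyadically decompose the kernel,
\[
\mathsf{BI}_\alpha(f,g)(x)\lesssim\sum_{Q\in\mathscr{D}}|Q|^{\alpha/n-1}\int_{|y|_\infty\leq\ell(Q)}f(x-y)\,g(x+y)\,dy\;\chi_Q(x),
\]
then apply H\"older in $y$ with the conjugate exponents $(r,s)$ and change variables $t=x-y$, $z=x+y$ (both landing in $3Q$ when $x\in Q$), which produces $\langle f^r\rangle_{3Q}^{1/r}\langle g^s\rangle_{3Q}^{1/s}$ directly. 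After that, everything you describe is correct and is exactly the paper's argument: Lerner's lemma to pass from $3Q$ to shifted dyadic grids, a Calder\'on--Zygmund stopping time to extract the sparse family $\{Q_j^k\}$ with disjoint majorizing sets $E_j^k$ satisfying $|Q_j^k|\leq2|E_j^k|$, Orlicz--H\"older with $\bar\phi_1,\bar\phi_2,\bar\psi$ against $\phi_1,\phi_2,\psi$ to bring in the bump condition, discrete H\"older (Proposition~\ref{thm9}) with exponents $(p_1,p_2,q')$ (which handles $p<q$ since $1/p_1+1/p_2+1/q'\geq1$), and P\'erez's $B_p$ theorem for $M_{\bar\phi_1},M_{\bar\phi_2},M_{\bar\psi}$. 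The required fix is thus to replace your first step with the kernel decomposition plus inner H\"older; the rest stands.
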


For the general commutators defined on $\mathsf{BI}_\alpha$, we have Theorems \ref{thmA} and \ref{thmB} as stated below.

\begin{theorem} \label{thmA}
Suppose $0<\alpha<n$, $\vec{b}\in BMO^N$, $p_1>1$, $p_2>1$,  $\frac{1}{p}=\frac{1}{p_1}+\frac{1}{p_2}$, $\frac{1}{2}<p\leqslant q\leqslant1$, and the set of weights $(u,v_1,v_2)$ satisfies
\begin{equation*}
    \sup_Q |Q|^{\frac{\alpha}{n}+\frac{1}{q}-\frac{1}{p}}\bigl\|u^\frac{1}{1-q}\bigr\|_{\psi,Q}^\frac{1-q}{q}\thinspace\bigl\|v_1^{-\frac{1}{p_1}}\bigr\|_{\phi_1,Q}\thinspace\bigl\|v_2^{-\frac{1}{p_2}}\bigr\|_{\phi_2,Q} < \infty
\end{equation*}
where $\phi_1(t)=t^{p_1'}\log(e+t)^{(m+1)p_1'-1+\delta}$, $\phi_2(t)=t^{p_2'}\log(e+t)^{(N-m+1)p_2'-1+\delta}$, $\delta>0$, and $\psi(t)=t\log(e+t)^\frac{qN}{1-q}$ with $\bigl\|u^\frac{1}{1-q}\bigr\|_{\psi,Q}^{1-q}=\sup_Qu$ when $q=1$. Then, the inequality
$$
\bigl\|[\vec{b},\mathsf{BI}_{\alpha}]_{\vec{\beta}}(f,g)\bigr\|_{L^q(u)} \lesssim \|\vec{b}\|\left\|f\right\|_{L^{p_1}(v_1)} \left\|g\right\|_{L^{p_2}(v_2)}
$$
holds for all $f\in L^{p_1}(v_1)$ and $g\in L^{p_2}(v_2)$, where $\|\vec{b}\|=\prod_{i=1}^{N}\|b_i\|_{BMO}$.
\end{theorem}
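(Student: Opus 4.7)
The plan is to reduce the iterated commutator estimate to the corresponding estimate for $\mathsf{BI}_\alpha$ itself (with extra Orlicz bumps) via a John--Nirenberg expansion of the $BMO$ oscillations. The expansion costs only additional logarithmic factors, and these are precisely the extra log-powers $(m+1)p_1'-1+\delta$ on $\phi_1$, $(N-m+1)p_2'-1+\delta$ on $\phi_2$, and $qN/(1-q)$ on $\psi$ appearing in the hypothesis.

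Starting from the kernel representation
$$[\vec b,\mathsf{BI}_\alpha]_{\vec\beta}(f,g)(x)=\int_{\R^n}\prod_{i=1}^{m}(b_i(x)-b_i(x-y))\prod_{j=m+1}^{N}(b_j(x)-b_j(x+y))\frac{f(x-y)g(x+y)}{|y|^{n-\alpha}}\,dy,$$
dyadically decompose the $y$-integral into annuli, each corresponding to a cube $Q\ni x$ of matching scale. On every such annulus, split $b_i(x)-b_i(x\pm y)=(b_i(x)-b_{i,Q})-(b_i(x\pm y)-b_{i,Q})$ and expand the product. This generates $2^N$ pieces in which the $x$-dependent factors $\prod_{i\in S}(b_i(x)-b_{i,Q})$ pull out of the local integral, leaving products of $(b_i-b_{i,Q})$ inside multiplying $f$ or $g$. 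Absorbing these latter oscillations via the Orlicz H\"older inequality together with John--Nirenberg $\|b-b_Q\|_{\exp L,Q}\lesssim\|b\|_{BMO}$, and summing over scales, yields a sparse-type pointwise domination of $\bigl|[\vec b,\mathsf{BI}_\alpha]_{\vec\beta}(f,g)(x)\bigr|$ by
$$\|\vec b\|\sum_{Q\in\mathcal S}|Q|^{\frac{\alpha}{n}}\bigl\|f\bigr\|_{L(\log L)^m,Q}\bigl\|g\bigr\|_{L(\log L)^{N-m},Q}\prod_{i\in S_Q}\!(b_i(x)-b_{i,Q})\,\chi_Q(x),$$
with an outer sum over distinguished subsets $S_Q\subset\{1,\ldots,N\}$.

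The remaining task is to take the $L^q(u)$ norm of this sum. Because $q\leqslant 1$ rules out duality, I would follow the path of Theorem~\ref{thmD}: evaluate $\|\cdot\|_{L^q(u)}^q$ directly through the layer-cake formula, exploit sparseness to reduce to a Carleson-type embedding, and on each cube apply the triple Orlicz H\"older inequality against $\psi$, $\phi_1$, $\phi_2$ to invoke the hypothesized bump bound. The outer $(b_i-b_{i,Q})$ factors are integrated against $u$ using John--Nirenberg once more, producing the $qN/(1-q)$ log-power on $u$; the extra $m p_1'$ and $(N-m)p_2'$ log-powers on $\phi_1,\phi_2$ appear from the initial Orlicz H\"older pairing of $L(\log L)^m,L(\log L)^{N-m}$ against the inverse weights.

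The main obstacle is this last step: in the absence of duality, the weighted norm must be handled by a sublinear level-set/Carleson argument that simultaneously tracks all three Orlicz bumps together with the outer $BMO$ oscillation factors, and the bookkeeping of logarithmic exponents so that they match the prescribed $\phi_1,\phi_2,\psi$ is delicate. The boundary case $q=1$, where the bump on $u$ degenerates to $\sup_Q u$, requires a separate treatment respecting the $L^\infty$-type structure there.
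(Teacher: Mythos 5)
Your roadmap matches the paper's proof: kernel representation, expansion of the oscillations around (triple) cube averages, absorption of the inner oscillations via the generalized H\"older inequality and John--Nirenberg, reduction to a Calder\'on--Zygmund stopping-time family of cubes $\{Q_j^k\}$ satisfying $|Q_j^k|\leqslant 2|E_j^k|$, and the triple Orlicz/$B_p$-maximal estimate against the bump condition. The step worth making explicit, since it is what makes the $q\leqslant 1$ case work, is H\"older's inequality on each cube with exponents $\bigl(\tfrac1q,\tfrac{1}{1-q}\bigr)$ applied to $\int_Q(\cdots)^q\,\bigl[\prod_{i\in A\cup B}|b_i-\lambda_i|\bigr]^q u\,dx$ immediately after using $q$-subadditivity; it is this step, not ``John--Nirenberg once more'' alone, that places the exponent $\tfrac{q}{1-q}$ on the outer BMO factors, so that the $\exp(L^{(1-q)/q})$ versus $L(\log L)^{q/(1-q)}$ pairing then yields the $L(\log L)^{qN/(1-q)}$ norm of $u^{1/(1-q)}$ dictating $\psi$.
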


\begin{theorem} \label{thmB}
Suppose $0<\alpha<n$, $\vec{b}\in BMO^N$, $p_1>r>1$, $p_2>s>1$,  $\frac{1}{r}+\frac{1}{s}=1$, $\frac{1}{p}=\frac{1}{p_1}+\frac{1}{p_2}$, $1<p\leqslant q < \infty$, and the set of weights $(u,v_1,v_2)$ satisfies
\begin{equation*}
    \sup_Q |Q|^{\frac{\alpha}{n}+\frac{1}{q}-\frac{1}{p}}\bigl\|u^\frac{1}{q}\bigr\|_{\psi,Q}\thinspace\bigl\|v_1^{-\frac{r}{p_1}}\bigr\|_{\phi_1,Q}^\frac{1}{r}\thinspace\bigl\|v_2^{-\frac{s}{p_2}}\bigr\|_{\phi_2,Q}^\frac{1}{s} < \infty
\end{equation*}
where
$$\phi_1(t)=t^{\left(\frac{p_1}{r}\right)'}\log(e+t)^{(mr+1)\left(\frac{p_1}{r}\right)'-1+\delta}
$$
$$
\phi_2(t)=t^{\left(\frac{p_2}{s}\right)'}\log(e+t)^{((N-m)s+1)\left(\frac{p_2}{s}\right)'-1+\delta}
$$
and $\psi(t)=t^q\log(e+t)^{(N+1)q-1+\delta}$, $\delta>0$. Then, the inequality
$$
\bigl\|[\vec{b},\mathsf{BI}_{\alpha}]_{\vec{\beta}}(f,g)\bigr\|_{L^q(u)} \lesssim \|\vec{b}\|\left\|f\right\|_{L^{p_1}(v_1)} \left\|g\right\|_{L^{p_2}(v_2)}
$$
holds for all $f\in L^{p_1}(v_1)$ and $g\in L^{p_2}(v_2)$.
\end{theorem}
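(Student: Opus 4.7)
The strategy is to combine the dyadic/sparse framework underlying Theorem \ref{thmE} with the standard BMO commutator expansion. As a first step I would replace $\mathsf{BI}_\alpha$ by a sparse dyadic model. Following the sparse/Sawyer--Wheeden approach one has a pointwise bound
$$\mathsf{BI}_\alpha(f,g)(x)\lesssim \sum_{Q\in\mathcal{S}} |Q|^{\alpha/n}\left(\dashint_Q f\right)\left(\dashint_Q g\right)\chi_Q(x)$$
for a sparse family $\mathcal{S}$ in an appropriate dyadic grid, and $[\vec b,\mathsf{BI}_\alpha]_{\vec\beta}$ then satisfies an analogous bound in which each $b_i$ with $\beta_i=1$ contributes a factor $b_i(x)-b_i(y)$ and each $b_i$ with $\beta_i=2$ contributes a factor $b_i(x)-b_i(z)$.

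Next, inserting $\pm b_{i,Q}$ inside each BMO factor and expanding yields a finite sum of $2^N$ operators of the form
$$\sum_{Q\in\mathcal{S}}|Q|^{\alpha/n}\prod_{i\in\sigma}(b_i(x)-b_{i,Q})\left(\dashint_Q \prod_{i\in\tau_1}(b_i-b_{i,Q})\,f\right)\left(\dashint_Q \prod_{i\in\tau_2}(b_i-b_{i,Q})\,g\right)\chi_Q,$$
where $\sigma\cupdot\tau_1\cupdot\tau_2=\{1,\dots,N\}$, $\tau_1\subset\{1,\dots,m\}$ and $\tau_2\subset\{m+1,\dots,N\}$. It suffices to bound the representative term with $|\tau_1|=m$ and $|\tau_2|=N-m$; the remaining terms, in which some BMO factors stay outside, are handled by the same argument after transferring those factors to the $\psi$-side via John--Nirenberg.

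For the dominant term I would dualize against $h\in L^{q'}$ with $\|h\|_{q'}=1$ and apply the generalized H\"older inequality in Orlicz spaces cube by cube: on each $Q$, $\psi$ absorbs $hu^{1/q}$, $\phi_1^{1/r}$ absorbs $v_1^{-r/p_1}$ after an inner H\"older step with exponent $r$ that recasts the $f$-average on $L^{p_1/r}$, and $\phi_2^{1/s}$ absorbs $v_2^{-s/p_2}$ symmetrically. The BMO factors inside the $f$-average are controlled by the John--Nirenberg estimate $\bigl\|\prod_{i=1}^k(b_i-b_{i,Q})\bigr\|_{\exp L^{1/k},Q}\lesssim\prod_{i=1}^k\|b_i\|_{BMO}$, and the log exponents in $\phi_1,\phi_2,\psi$ are tuned so that these $\exp L$-type norms can be absorbed into the Orlicz norms of $v_i^{-r/p_i}$, $v_i^{-s/p_i}$ and $u^{1/q}$ via a further Orlicz H\"older step, leaving $\delta>0$ slack so that the complementary functions remain in $B_{p_1/r}$, $B_{p_2/s}$ and $B_{q'}$. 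Summing over $\mathcal{S}$ by sparseness, the bump hypothesis, and the $B_p$-bounds for the associated Orlicz maximal operators yields the conclusion.

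The main obstacle I expect is the combinatorial bookkeeping of the log exponents. Each of the $N$ BMO factors must be charged to exactly one of $\phi_1$, $\phi_2$ or $\psi$ according to whether the factor sits inside the $f$-average, the $g$-average, or outside; the extra factor of $r$ (respectively $s$) multiplying $m$ (respectively $N-m$) in the exponent of $\phi_1$ (respectively $\phi_2$) arises precisely from the H\"older step that lifts $L^{p_1}$ to $L^{p_1/r}$, since absorbing $k$ inner BMO factors at the $L^{p_1/r}$ level costs $kr$ extra log-powers rather than $k$. Verifying that the apportionment $(mr+1)(p_1/r)'-1+\delta$, $((N-m)s+1)(p_2/s)'-1+\delta$ and $(N+1)q-1+\delta$ is consistent across all $2^N$ expansion terms, and that the remaining $\delta$-slack suffices to close the $B_p$-summation, is the delicate step. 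Once this accounting is in place, the proof closes exactly as in Theorem \ref{thmE}.
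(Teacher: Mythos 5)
Your overall roadmap—dualizing against $h\in L^{q'}$, expanding the commutator by inserting $\pm b_{i,Q}$, using John–Nirenberg and the generalized Orlicz Hölder inequality to distribute the $N$ BMO factors among $\phi_1$, $\phi_2$, $\psi$, and closing with $B_p$-bounds for Orlicz maximal operators—is the same route the paper follows. However, the very first step is wrong, and not in a repairable-by-hand-waving way.

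The pointwise sparse domination you assert,
\[
\mathsf{BI}_\alpha(f,g)(x)\lesssim \sum_{Q\in\mathcal{S}} |Q|^{\alpha/n}\Big(\dashint_Q f\Big)\Big(\dashint_Q g\Big)\chi_Q(x),
\]
is \emph{false} for this operator. The ``Sawyer–Wheeden'' sparse bound you are thinking of applies to the multilinear fractional integral with uncoupled variables $\int f(y_1)g(y_2)|(x-y_1,x-y_2)|^{\alpha-2n}\,d\vec y$, where the $L^1$-averages do dominate. For $\mathsf{BI}_\alpha$, the diagonal coupling $f(x-y)g(x+y)$ makes the simple $L^1$-averages too small. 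A concrete counterexample on $\mathbb R$: take $f=\chi_{[0,\epsilon]}$, $g=\chi_{[2-\epsilon,2]}$, and evaluate at $x=1$. Then $\mathsf{BI}_\alpha(f,g)(1)=\int_{1-\epsilon}^{1}|y|^{\alpha-1}\,dy\approx\epsilon$, while any sparse family gives $\sum_{Q\ni 1}|Q|^{\alpha}(\dashint_Q f)(\dashint_Q g)\lesssim\epsilon^2\sum_{|Q|\geqslant 2}|Q|^{\alpha-2}\approx\epsilon^2$, because $Q$ must contain $[0,2]$ to see both supports. Since $\epsilon\not\lesssim\epsilon^2$, the claimed bound fails. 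This is exactly why the paper's Theorem \ref{thmF} controls $\mathsf{BI}_\alpha$ by $\M^{r,s}_\alpha$ with a \emph{nontrivial} Hölder pair $(r,s)$, never by $\M^{1,1}_\alpha$.

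The fix is structural, not cosmetic: the inner Hölder step with exponents $(r,s)$ must be applied \emph{before} any sparse reduction, and the stopping-time selection must then be driven by the resulting $(r,s)$-type Orlicz averages. That is precisely the order in the paper's proof: from \eqref{eq42} one passes to the full dyadic sum $\sum_{Q\in\mathscr D}$ with the $|y|_\infty\leqslant\ell(Q)$ truncation, dualizes, applies Hölder with $(r,s)$ inside the $y$-integral to produce $(\dashint_{3Q}\prod|b_i-\lambda_i|^r f^r)^{1/r}(\dashint_{3Q}\prod|b_i-\lambda_i|^s g^s)^{1/s}$, absorbs the BMO factors by John–Nirenberg to get $\|f^r\|^{1/r}_{L(\log L)^{mr},Q}\|g^s\|^{1/s}_{L(\log L)^{(N-m)s},Q}$, and only then performs the Calderón–Zygmund selection of a sparse family $\{Q_j^k\}$ maximal with respect to that very product of Orlicz averages. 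A sparse family chosen with respect to $L^1$-averages would not have the property needed for the stopping-time sum to telescope. Your later discussion of the log-exponent bookkeeping and the $\delta$-slack is on the right track, but it is built on a starting estimate that does not hold, and the sparse selection you would feed it is keyed to the wrong stopping quantity.

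One secondary point: your expansion writes the inner BMO factors as $\prod_{i\in\tau_1}(b_i-b_{i,Q})$ and $\prod_{i\in\tau_2}(b_i-b_{i,Q})$ multiplied against $f$ and $g$ \emph{inside the same cube $Q$}. After the $(r,s)$-Hölder step, the inner factors appear raised to the power $r$ (resp. $s$) inside the $f^r$ (resp. $g^s$) average, which is why Corollary \ref{thm8} with exponent $\xi=r$ (resp. $s$) is used, and why the log-exponents in $\phi_1,\phi_2$ carry the factors $mr$ and $(N-m)s$. You identify this correctly in prose, but it would not emerge from the $L^1$-averaged sparse model you started from.
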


We note here that the H\"older pairs $(r,s)$ in Theorems \ref{thmE} and \ref{thmB} exist and there are many such pairs. To name a few, we can start with $r=\frac{p_1}{p}$ and $s=\frac{p_2}{p}$, then we use the facts that $r<p_1$ and $s<p_2$ to obtain more choices by considering either $r=\frac{p_1}{p}+\epsilon$ or $s=\frac{p_2}{p}+\epsilon$, for small $\epsilon>0$. Also, we have completed just two thirds of the whole picture (i.e. the two cases: $p\leqslant q\leqslant1$ and $1<p\leqslant q$). The case when $p\leqslant1\leqslant q$ is still open.

While studying $\mathsf{BI}_\alpha$ and $[\vec{b},\mathsf{BI}_\alpha]_{\vec{\beta}}$, we need the following maximal operators: given two Young functions $\Phi$ and $\Psi$
$$\mathcal{M}^{\Phi,\Psi}_{\alpha}(f,g)(x)=\sup_{Q\ni x}|Q|^{\frac{\alpha}{n}}\|f\|_{\Phi,Q}\|g\|_{\Psi,Q}.$$
When $\al=0$ we write $\mathcal{M}^{\Phi,\Psi}_0=\M^{\Phi,\Psi}$.  When $\Phi(t)=t^r$ and $\Psi(t)=t^s$ we write 
$$\M^{\Phi,\Psi}_{\alpha}(f,g)(x)=\M_\al^{r,s}(f,g)(x)=\sup_{Q\ni x}|Q|^{\frac{\al}{n}}\left(\dashint_Q |f|^r\right)^{\frac1r}\left(\dashint_Q |g|^s\right)^{\frac1s}$$
and when $\Phi(t)=\Psi(t)=t$ we write
$$\M_\al(f,g)(x)=\M^{1,1}_\al(f,g)(x)=\sup_{Q\ni x}|Q|^{\frac{\al}{n}}\left(\dashint_Q |f|\right)\left(\dashint_Q |g|\right).$$
The controls that we have mentioned above are stated in the two theorems below.

\begin{theorem} \label{thmF}
Suppose $0<\alpha<n$, $0<q<\infty$ and $(r,s)$ is a H\"older pair. If the weight $w\in A_{\infty}$, then
$$
\int_{\mathbb{R}^n} \bigl|\mathsf{BI}_{\alpha}(f,g)(x)\bigr|^qw(x) \thinspace dx \lesssim \int_{\mathbb{R}^n} \M^{r,s}_{\alpha}(f,g)(x)^qw(x) \thinspace dx.
$$
\end{theorem}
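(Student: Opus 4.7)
The plan is to prove the inequality via a good-$\lambda$ inequality comparing $\mathsf{BI}_\alpha(f,g)$ with $\M_\alpha^{r,s}(f,g)$. Specifically, I want to establish that for some $\eta > 0$ (coming from the $A_\infty$ condition on $w$) and all $\gamma \in (0,1)$, $\lambda > 0$,
$$w\bigl(\{|\mathsf{BI}_\alpha(f,g)| > 2\lambda,\ \M_\alpha^{r,s}(f,g) \leqslant \gamma\lambda\}\bigr) \leqslant C\gamma^\eta \thinspace w\bigl(\{|\mathsf{BI}_\alpha(f,g)| > \lambda\}\bigr).$$
The standard integration-in-$\lambda$ argument then gives the theorem, first under the qualitative assumption $\mathsf{BI}_\alpha(f,g) \in L^q(w)$ (obtained by initially truncating $f$ and $g$) and then in general by taking a limit.

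For the good-$\lambda$ inequality I decompose $\Omega_\lambda := \{|\mathsf{BI}_\alpha(f,g)| > \lambda\}$ into its Whitney cubes $\{Q_j\}$. Using the $A_\infty$ property of $w$, the problem reduces to proving the Lebesgue-measure estimate
$$|\{x \in Q_j : |\mathsf{BI}_\alpha(f,g)(x)| > 2\lambda,\ \M_\alpha^{r,s}(f,g)(x) \leqslant \gamma\lambda\}| \leqslant C\gamma^{q_0}|Q_j|$$
on each Whitney cube, where $q_0 := n/(n-\alpha)$. Assuming the set in question is non-empty, there exist $x_0 \in Q_j$ with $\M_\alpha^{r,s}(f,g)(x_0) \leqslant \gamma\lambda$ and, by the Whitney property, a reference point $y^* \in 2Q_j \setminus \Omega_\lambda$ with $|\mathsf{BI}_\alpha(f,g)(y^*)| \leqslant \lambda$. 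I then split $f = f_1 + f_2$ and $g = g_1 + g_2$ with $f_1 = f\chi_{\tilde Q_j}$ and $g_1 = g\chi_{\tilde Q_j}$ for a suitable enlargement $\tilde Q_j$ of $Q_j$.

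The local piece $\mathsf{BI}_\alpha(f_1, g_1)$ is handled by the strong-type estimate $\mathsf{BI}_\alpha : L^r \times L^s \to L^{q_0}$ of Kenig--Stein and Grafakos, which is valid at this H\"older endpoint since $q_0 > 1$. Chebyshev followed by H\"older on $\tilde Q_j$ yields
$$|\{x \in Q_j : |\mathsf{BI}_\alpha(f_1, g_1)(x)| > \lambda/2\}| \lesssim \frac{\|f_1\|_r^{q_0}\|g_1\|_s^{q_0}}{\lambda^{q_0}} \lesssim |\tilde Q_j|\left(\frac{\M_\alpha^{r,s}(f,g)(x_0)}{\lambda}\right)^{q_0} \lesssim \gamma^{q_0}|Q_j|,$$
where the last step uses $(1-\alpha/n)q_0 = 1$. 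For the three non-local pieces the key geometric observation is that if $x \in Q_j$ and at least one of $x \pm z$ lies outside $\tilde Q_j$, then $|z| \geqslant c\ell(Q_j)$; I would then compare the non-local integrals at $x \in Q_j$ with their values at the Whitney reference point $y^*$ in order to reduce matters to the bound $|\mathsf{BI}_\alpha(f,g)(y^*)| \leqslant \lambda$ up to controlled errors.

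The difficult step is the non-local estimate. In the classical linear Muckenhoupt--Wheeden argument for $I_\alpha$, the kernel $|x-y|^{\alpha-n}$ depends on the base point $x$ and a mean-value estimate on the kernel yields the geometric series $\sum_k 2^{-k}M_\alpha f(x)$. The kernel $|z|^{\alpha-n}$ of $\mathsf{BI}_\alpha$, however, is in the integration variable $z$ alone; the base point $x$ enters only through the arguments $f(x-z)$ and $g(x+z)$, which shift in opposite directions under $x \mapsto y^*$, so no direct kernel-smoothness argument is available. A naive annular decomposition bounds each dyadic piece of the non-local part by $C\M_\alpha^{r,s}(f,g)(x)$ but gives a divergent sum. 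The crux will therefore be to extract a scale-dependent saving --- either by a two-step change of variables comparing $f(x-z)$ with $f(y^*-z)$ and $g(x+z)$ with $g(y^*+z)$ on each annulus $\{|z| \sim 2^k\ell(Q_j)\}$ (exploiting $|x-y^*| \lesssim \ell(Q_j) \ll 2^k\ell(Q_j)$), or through a bilinear sparse-type domination that bypasses pointwise comparisons.
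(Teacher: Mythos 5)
Your proposal correctly identifies the obstruction, but does not overcome it, and I believe it cannot be overcome within the good-$\lambda$ framework as you set it up. The local piece is fine and follows exactly as you describe from Kenig--Stein/Grafakos. The genuine gap is the non-local estimate, which you explicitly leave open. The reason it is not merely ``difficult'' but structurally broken is the one you yourself point out: in $\mathsf{BI}_\alpha(f,g)(x)=\int f(x-z)g(x+z)|z|^{\alpha-n}\,dz$ the two arguments always sum to $2x$, whereas at the Whitney reference point $y^*$ they sum to $2y^*$. No single change of variables $z\mapsto z'$ can simultaneously carry $x-z\mapsto y^*-z'$ and $x+z\mapsto y^*+z'$; the required shifts are $z'=z-(x-y^*)$ and $z'=z+(x-y^*)$, which are incompatible unless $x=y^*$. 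Since the kernel $|z|^{\alpha-n}$ carries no $x$-dependence, there is also no Muckenhoupt--Wheeden mean-value/kernel-smoothness saving to exploit. So neither of your two suggested escape routes closes the argument: the ``two-step change of variables'' runs into this opposite-shift obstruction, and ``bilinear sparse-type domination'' is not a repair of the good-$\lambda$ step but an abandonment of it in favor of a different proof.

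That different proof is in fact what the paper does. The paper's route (it states that Theorem \ref{thmF} follows the pattern of Theorem \ref{thmC}) is: (i) invoke the $A_\infty$ extrapolation result of Cruz-Uribe--Martell--P\'erez (Theorem \ref{thm6}) to reduce to the single exponent $q_0=1$; (ii) discretize $\mathsf{BI}_\alpha$ over dyadic cubes and apply H\"older on $3Q$ to get $|Q|^{\alpha/n}\bigl(\dashint_{3Q}f^r\bigr)^{1/r}\bigl(\dashint_{3Q}g^s\bigr)^{1/s}$ as the localized quantity, then pass to a stopping-time (Calder\'on--Zygmund) family $\{Q_j^k\}$ on which the sum over nested dyadic cubes collapses by a geometric series $\sum_r 2^{-r\alpha}$; (iii) use reverse H\"older and the $A_\infty$ property $w(Q_j^k)\lesssim w(E_j^k)$ to replace cubes by pairwise-disjoint sets, and conclude by summing. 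No good-$\lambda$, no Whitney decomposition, no pointwise comparison with a reference point. Your plan is thus a genuinely different route, but one with a hole precisely at its crux, and the hole is filled in the paper not by a cleverer non-local estimate but by replacing the entire good-$\lambda$ scheme with extrapolation plus sparse/CZ discretization. If you want to salvage your outline, I would recommend dropping the good-$\lambda$ framing entirely and instead carrying out the dyadic sparse domination of $\mathsf{BI}_\alpha$ directly, which is where your ``alternative 2'' was pointing.
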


\begin{theorem} \label{thmC}
Suppose $0<\alpha<n$, $0<q<\infty$ and $(r,s)$ is a H\"older pair. If the weight $w\in A_{\infty}$, then
$$
\int_{\mathbb{R}^n} \bigl|[\vec{b},\mathsf{BI}_{\alpha}]_{\vec{\beta}}(f,g)(x)\bigr|^qw(x) \thinspace dx \lesssim \|\vec{b}\|^q \int_{\mathbb{R}^n} \M^{\Phi,\Psi}_{\alpha}(f,g)(x)^qw(x) \thinspace dx
$$
where $\Phi(t)=t^r\log(e+t)^{mr}$ and $\Psi(t)=t^s\log(e+t)^{(N-m)s}$.
\end{theorem}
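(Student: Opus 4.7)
My plan is to prove Theorem \ref{thmC} by induction on $N$, the total order of the commutator, with the base case $N=0$ being precisely Theorem \ref{thmF} (for which $\Phi(t)=t^r$ and $\Psi(t)=t^s$, the log-factors vanishing). For the inductive step I would establish a pointwise Fefferman--Stein sharp maximal function estimate for $[\vec b,\mathsf{BI}_\alpha]_{\vec\beta}(f,g)$ and conclude via the standard $A_\infty$ extrapolation $\|h\|_{L^q(w)}\lesssim\|M^\#_\delta h\|_{L^q(w)}$, valid for $w\in A_\infty$, $0<q<\infty$, and $\delta\in(0,1)$. Throughout, normalize $\|b_i\|_{BMO}=1$.

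The central algebraic tool is the Leibniz-type identity, valid for any constants $c_1,\dots,c_N$,
\[
[\vec b,\mathsf{BI}_\alpha]_{\vec\beta}(f,g)(x)=\sum_{S_1\subseteq\{1,\dots,m\}}\sum_{S_2\subseteq\{m+1,\dots,N\}}(-1)^{N-|S_1|-|S_2|}\prod_{i\in S_1^c\cup S_2^c}(b_i(x)-c_i)\cdot\mathsf{BI}_\alpha(F_{S_1},G_{S_2})(x),
\]
where $F_{S_1}=\prod_{i\in S_1}(b_i-c_i)f$ and $G_{S_2}=\prod_{j\in S_2}(b_j-c_j)g$. For a point $x$ inside a cube $Q$ one selects $c_i=(b_i)_Q$. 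Every term with $(S_1,S_2)\ne(\{1,\dots,m\},\{m+1,\dots,N\})$ is a strict sub-commutator of order $<N$, which will be handled by the inductive hypothesis (applied to the smaller index set). The remaining full-order term reduces to estimating $\mathsf{BI}_\alpha\bigl(\prod_{i=1}^m(b_i-(b_i)_Q)f,\prod_{j=m+1}^N(b_j-(b_j)_Q)g\bigr)$ split into a local part over $2Q$ and a global part over $(2Q)^c$. On the local part, Theorem \ref{thmF} reduces the bilinear expression to $\mathcal{M}^{r,s}_\alpha$ acting on the two products; the generalized H\"older inequality for Orlicz norms, combined with the John--Nirenberg bound $\|b_i-(b_i)_Q\|_{\exp L,Q}\lesssim 1$, distributes the $m$ exponential-class factors against $f\in L^r$ and the $N-m$ against $g\in L^s$. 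This is exactly the mechanism producing the log-weights $\log(e+t)^{mr}$ in $\Phi$ and $\log(e+t)^{(N-m)s}$ in $\Psi$, giving pointwise control by $\mathcal{M}^{\Phi,\Psi}_\alpha(f,g)(x)$. The global part is treated by the standard annular decomposition of $|y|^{\alpha-n}$ on $(2Q)^c$, producing a geometrically summable sequence of averages on $2^kQ$ that collapses into the same maximal function.

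Packaging the local and global estimates yields a sharp-function inequality of the schematic form
\[
M^\#_\delta\bigl([\vec b,\mathsf{BI}_\alpha]_{\vec\beta}(f,g)\bigr)(x)\lesssim \mathcal{M}^{\Phi,\Psi}_\alpha(f,g)(x)+\sum_{\sigma\subsetneq\{1,\dots,N\}}M_r\bigl([\vec b_\sigma,\mathsf{BI}_\alpha]_{\vec\beta_\sigma}(f,g)\bigr)(x).
\]
Applying the Fefferman--Stein $A_\infty$ inequality to the left-hand side and the inductive hypothesis to each sub-commutator appearing on the right (whose Young functions are pointwise dominated by $\Phi$ and $\Psi$) absorbs every lower-order contribution into $\int\mathcal{M}^{\Phi,\Psi}_\alpha(f,g)^qw$ and closes the induction.

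The step I expect to be the main obstacle is producing the sharp-function estimate with the \emph{precise} log exponents $mr$ and $(N-m)s$ rather than a strictly larger power. This requires grouping the $m$ exponential BMO factors against $f$ and the remaining $N-m$ against $g$ in exactly the right order when invoking the generalized H\"older inequality on Orlicz spaces, so that the Young functions produced on the $f$- and $g$-sides match $\Phi$ and $\Psi$ on the nose. A secondary technical nuisance is controlling the global part of $\mathsf{BI}_\alpha$ when several BMO factors sit inside the operator: the standard annular kernel estimates must cooperate with the $L\log L$-type norms generated by the John--Nirenberg step, and ensuring that the telescoping sum collapses cleanly into $\mathcal{M}^{\Phi,\Psi}_\alpha$ (as opposed to a larger maximal function) is where bookkeeping is most delicate.
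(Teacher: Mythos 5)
The paper's proof of Theorem \ref{thmC} takes a quite different route from yours: it first invokes the Cruz-Uribe--Martell--P\'erez $A_\infty$ extrapolation theorem (Theorem \ref{thm6}) to reduce to the single exponent $q_0=1$, then carries out the Calder\'on--Zygmund stopping-time / sparse-domination argument already developed in the proof of Theorem~\ref{thmB}. In that argument the sum over all dyadic cubes is replaced by a sum over the sparse family $\{Q_j^k\}$ defined by the stopping condition $\|f^r\|^{1/r}_{L(\log L)^{mr},Q}\,\|g^s\|^{1/s}_{L(\log L)^{(N-m)s},Q}>a^k$, the $A_\infty$ reverse-H\"older and measure-comparability properties of $w$ replace $\|w\|_{L(\log L)^N,Q_j^k}$ by $\dashint_{Q_j^k}w$ and then $w(Q_j^k)$ by $w(E_j^k)$, and the disjointness of the $E_j^k$ closes the estimate. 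No sharp-function inequality is ever established.

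Your proposal hinges on a Fefferman--Stein pointwise sharp maximal function estimate for $\mathsf{BI}_\alpha$, and I do not believe this estimate is available. After the change of variables $u=x-y$ one has
\[
\mathsf{BI}_\alpha(f,g)(x)=\int_{\R^n}\frac{f(u)\,g(2x-u)}{|x-u|^{n-\alpha}}\,du,
\]
so the effective kernel $K(x,u)=\dfrac{g(2x-u)}{|x-u|^{n-\alpha}}$ carries the factor $g(2x-u)$, which does not oscillate smoothly in the output variable. For the linear $I_\alpha$, the global-part oscillation estimate produces a crucial geometric gain $2^{-k}$ from the gradient bound $\bigl|\nabla_x|x-u|^{\alpha-n}\bigr|\lesssim|x-u|^{\alpha-n-1}$, which sums the annular series. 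For $\mathsf{BI}_\alpha$ the analogous decomposition of the difference $\mathsf{BI}_\alpha^{glob}(f,g)(z)-\mathsf{BI}_\alpha^{glob}(f,g)(c_Q)$ splits into a piece involving $\bigl|\,|z-u|^{\alpha-n}-|c_Q-u|^{\alpha-n}\bigr|$, which does gain $2^{-k}$, and a piece involving $|g(2z-u)-g(2c_Q-u)|/|z-u|^{n-\alpha}$, which gains nothing because $g\in L^{p_2}$ has no modulus of continuity. Dropping the cancellation and estimating crudely gives $\sum_k(2^k\ell(Q))^\alpha\bigl(\dashint_{2^kQ}f^r\bigr)^{1/r}\bigl(\dashint_{2^kQ}g^s\bigr)^{1/s}$, which is not controlled by $\mathcal{M}^{r,s}_\alpha(f,g)(x)$: taking $f=g=\chi_A$ for $A$ a lacunary union of intervals $[2^j,2^j+2^{j(1-\alpha)}]$ (in $n=1$), each dyadic term is of order $1$ while $\mathcal{M}^{r,s}_\alpha(f,g)(0)$ stays bounded, and a direct computation shows the oscillation of $\mathsf{BI}_\alpha(f,g)$ over $[-\ell_0/2,\ell_0/2]$ grows like $\log\ell_0$. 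This reflects the fact that $\mathsf{BI}_\alpha$ is a \emph{degenerate} bilinear operator whose kernel is supported on the hyperplane $y_1+y_2=2x$; the usual Coifman--Fefferman good-$\lambda$/sharp-function machinery, which requires smoothness of the kernel in the output variable, simply does not apply to it, and that is exactly why the paper works with a sparse domination constructed directly from the stopping-time condition rather than via $M^\#$.

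A secondary remark: you take Theorem \ref{thmF} as the base case of the induction, but in the paper Theorem~\ref{thmF} is not independently established---it is explicitly described as a special case of the same sparse-domination technique used for Theorem~\ref{thmC}. So even granting the inductive step, the base case would require a self-contained proof, and the only one available is the one you are trying to avoid.
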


\hfill

In Theorem \ref{thmE}, if we consider special power-bump Young functions, then the condition on the weights $(u,v_1,v_2)$ become
\begin{equation} \label{eq21}
    \sup_Q |Q|^{\frac{\alpha}{n}+\frac{1}{q}-\frac{1}{p}}\left(\dashint_Q{u}\right)^{\frac{1}{q}} \left(\dashint_Q{v_1^{-\frac{r}{p_1-r}}}\right)^{\frac{p_1-r}{rp_1}} \left(\dashint_Q{v_2^{-\frac{s}{p_2-s}}}\right)^{\frac{p_2-s}{sp_2}} < \infty
\end{equation}
where $\left(\dashint_Q{v_1^{-\frac{r}{p_1-r}}}\right)^\frac{p_1-r}{r}=\left(\inf_Q{v_1}\right)^{-1}$ when $p_1=r$, and $\left(\dashint_Q{v_2^{-\frac{s}{p_2-s}}}\right)^\frac{p_2-s}{s}=\left(\inf_Q{v_2}\right)^{-1}$ when $p_2=s$.

It turns out that condition \eqref{eq21} can be characterized via the weak type and the strong type weighted boundedness of the maximal operator $\M^{r,s}_{\alpha}$, not only for $0<\alpha<n$ but also for $\alpha=0$. These results are stated in the following theorems.

\begin{theorem} \label{thmG}
Suppose $0\leqslant\alpha<n$, $p_1\geqslant r>1$, $p_2\geqslant s>1$, $1<p=\frac{p_1p_2}{p_1+p_2}\leqslant q$. Then $(u,v_1,v_2)$ satisfies condition \eqref{eq21} if and only if the inequality
$$
\sup_{\lambda>0}{\lambda \thinspace u\left(\{x:\M^{r,s}_{\alpha}(f,g)(x)>\lambda\}\right)^{\frac{1}{q}}} \lesssim \left\|f\right\|_{L^{p_1}(v_1)} \left\|g\right\|_{L^{p_2}(v_2)}
$$
holds for all $f\in L^{p_1}(v_1)$ and $g\in L^{p_2}(v_2)$.
\end{theorem}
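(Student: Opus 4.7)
My plan is to prove the two directions of the equivalence separately.

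For \emph{necessity}, I would test the weak bound against $f_0=v_1^{-1/(p_1-r)}\chi_{Q_0}$ and $g_0=v_2^{-1/(p_2-s)}\chi_{Q_0}$ for an arbitrary cube $Q_0$, with the obvious reinterpretation via essential infima in the endpoints $p_1=r$ or $p_2=s$. The exponents are calibrated so that $|f_0|^{p_1}v_1=v_1^{-r/(p_1-r)}\chi_{Q_0}$, giving
$$
\|f_0\|_{L^{p_1}(v_1)}=|Q_0|^{1/p_1}\Bigl(\dashint_{Q_0}v_1^{-r/(p_1-r)}\Bigr)^{1/p_1}
$$
and analogously for $g_0$; and so that $(\dashint_{Q_0}|f_0|^r)^{1/r}(\dashint_{Q_0}|g_0|^s)^{1/s}$ matches the reciprocal-bump averages appearing in \eqref{eq21}. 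Since $Q_0$ is itself admissible in the supremum defining $\M^{r,s}_\alpha$, one has $\M^{r,s}_\alpha(f_0,g_0)\geq\Lambda$ throughout $Q_0$, where $\Lambda$ is the resulting product including the $|Q_0|^{\alpha/n}$ factor; taking $\lambda\uparrow\Lambda$ and applying the weak-type inequality then rearranges to \eqref{eq21} after cancelling common averages.

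For \emph{sufficiency}, I would first reduce to a single dyadic grid $\mathcal{D}$ via the standard shifted-dyadic-grid decomposition $\M^{r,s}_\alpha\lesssim\sum_{i=1}^{C_n}\M^{r,s,\mathcal{D}_i}_\alpha$, and then run a Calder\'on--Zygmund-type extraction: for each $\lambda>0$, let $\{Q_j\}\subset\mathcal{D}$ be the maximal disjoint cubes at which the relevant average exceeds $\lambda$, so that $\{\M^{r,s,\mathcal{D}}_\alpha(f,g)>\lambda\}=\bigcup_j Q_j$. On each $Q_j$, H\"older with exponents $p_1/r$ and $p_2/s$ inserts the weights,
$$
\Bigl(\dashint_{Q_j}|f|^r\Bigr)^{1/r}\leq\Bigl(\dashint_{Q_j}|f|^{p_1}v_1\Bigr)^{1/p_1}\Bigl(\dashint_{Q_j}v_1^{-\frac{r}{p_1-r}}\Bigr)^{\frac{p_1-r}{rp_1}}
$$
and similarly for $g$. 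Combining this with the defining inequality for $Q_j$, raising to the $q$-th power, multiplying through by $u(Q_j)$, and using \eqref{eq21} (raised to the $q$-th power) to absorb $\dashint_{Q_j}u$ together with the two reciprocal-bump factors, the geometric factors collapse to
$$
\lambda^q u(Q_j)\lesssim\Bigl(\int_{Q_j}|f|^{p_1}v_1\Bigr)^{q/p_1}\Bigl(\int_{Q_j}|g|^{p_2}v_2\Bigr)^{q/p_2}.
$$

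To finish, I would sum over $j$, take $q$-th roots, and set $y_j:=\bigl(\int_{Q_j}|f|^{p_1}v_1\bigr)^{1/p_1}\bigl(\int_{Q_j}|g|^{p_2}v_2\bigr)^{1/p_2}$. Two elementary steps then do the rest: first, the $\ell^p\hookrightarrow\ell^q$ embedding (valid because $p\leq q$) gives $(\sum_j y_j^q)^{1/q}\leq(\sum_j y_j^p)^{1/p}$; second, H\"older on the sum with conjugate exponents $p_1/p$ and $p_2/p$ (valid because $1/p=1/p_1+1/p_2$), combined with the disjointness of $\{Q_j\}$, produces $\sum_j y_j^p\leq\|f\|_{L^{p_1}(v_1)}^p\|g\|_{L^{p_2}(v_2)}^p$. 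The main obstacle I expect is the exponent bookkeeping inside the collapse step and the endpoint cases $p_1=r$, $p_2=s$, where the reciprocal-bump averages degenerate to essential infima and the necessity test function becomes a constant multiple of $\chi_{Q_0}$; the structural observation that makes the sufficiency argument go through is the \emph{simultaneous} availability of the embedding (which forces $p\leq q$) and of the H\"older conjugacy of $p_1/p$ with $p_2/p$ (which is forced by $1/p=1/p_1+1/p_2$).
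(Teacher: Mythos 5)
Your proposal is correct and follows essentially the same route as the paper: the same test functions $v_1^{-1/(p_1-r)}\chi_Q$, $v_2^{-1/(p_2-s)}\chi_Q$ for necessity, and for sufficiency the same reduction to a dyadic grid, Calder\'on--Zygmund extraction of the disjoint maximal cubes $\{Q_j\}$ comprising $E_\lambda$, H\"older with exponents $p_1/r$, $p_2/s$ to insert the weights, absorption of the geometric factors via \eqref{eq21}, the $\ell^p\hookrightarrow\ell^q$ embedding (here $p\leq q$), and a final discrete H\"older with conjugates $p_1/p$, $p_2/p$. The only cosmetic difference is that you collapse the defining inequality, H\"older, and \eqref{eq21} into a per-cube bound $\lambda^q u(Q_j)\lesssim y_j^q$ before summing and then applying the embedding, whereas the paper applies the embedding to the raw sum first and then performs the H\"older/\eqref{eq21} absorption inside; these operations commute, so the arguments are the same.
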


\begin{theorem} \label{thmH}
Suppose $0\leqslant\alpha<n$, $p_1>r>1$, $p_2>s>1$, $1<p=\frac{p_1p_2}{p_1+p_2}\leqslant q$. If $(u,v_1,v_2)$ satisfies
\begin{equation} \label{eq22}
    \sup_Q |Q|^{\frac{\alpha}{n}+\frac{1}{q}-\frac{1}{p}}\bigl\|u^\frac{1}{q}\bigr\|_{\psi,Q}\thinspace\bigl\|v_1^{-\frac{r}{p_1}}\bigr\|_{\phi_1,Q}^\frac{1}{r}\thinspace\bigl\|v_2^{-\frac{s}{p_2}}\bigr\|_{\phi_2,Q}^\frac{1}{s} < \infty
\end{equation}
where $\psi,\phi_1,\phi_2$ are Young functions satisfying $\bar{\psi}\in B_{q'}$, $\bar{\phi}_1\in B_{\frac{p_1}{r}}$ and $\bar{\phi}_2\in B_{\frac{p_2}{s}}$, then the inequality
$$
\|\M^{r,s}_{\alpha}(f,g)\|_{L^q(u)} \lesssim \left\|f\right\|_{L^{p_1}(v_1)} \left\|g\right\|_{L^{p_2}(v_2)}
$$
holds for all $f\in L^{p_1}(v_1)$ and $g\in L^{p_2}(v_2)$.
\end{theorem}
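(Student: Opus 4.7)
My plan is to combine a Calder\'on--Zygmund stopping-time decomposition with duality on $L^q(u)$, the generalized H\"older inequality for Orlicz spaces, and the $B_p$-boundedness of Orlicz maximal operators. First, by the one-third trick I would reduce to a dyadic maximal operator $\mathcal{M}^{r,s,\mathcal{D}}_\alpha$ relative to a single grid $\mathcal{D}$, and since $q>1$ dualize on $L^q(u)$: it suffices to estimate $\int \mathcal{M}^{r,s,\mathcal{D}}_\alpha(f,g)\,hu\,dx$ for nonnegative $h$ with $\|h\|_{L^{q'}(u)}=1$. I would then perform a Calder\'on--Zygmund decomposition at levels $\Omega_k=\{\mathcal{M}^{r,s,\mathcal{D}}_\alpha(f,g)>a^k\}$ (with $a>1$ chosen large enough to force sparsity), extracting the maximal dyadic cubes $Q_j^k$ in $\Omega_k$ and pairwise disjoint sets $E_j^k=Q_j^k\setminus\Omega_{k+1}$ with $|Q_j^k|\lesssim|E_j^k|$. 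Dominating $\mathcal{M}^{r,s,\mathcal{D}}_\alpha(f,g)$ by $a\sum_{k,j}a^k\chi_{E_j^k}$ and using $a^k\lesssim|Q_j^k|^{\alpha/n}\bigl(\dashint_{Q_j^k}|f|^r\bigr)^{1/r}\bigl(\dashint_{Q_j^k}|g|^s\bigr)^{1/s}$ on each $E_j^k$ reduces the task to controlling
\[
\sum_{k,j}|Q_j^k|^{\alpha/n}\Big(\dashint_{Q_j^k}|f|^r\Big)^{\!1/r}\Big(\dashint_{Q_j^k}|g|^s\Big)^{\!1/s}\int_{E_j^k}hu.
\]

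\textbf{Main estimate.}
Setting $F=|f|^r v_1^{r/p_1}$, $G=|g|^s v_2^{s/p_2}$, and $H=hu^{1/q'}$, the generalized H\"older inequality in Orlicz spaces yields $\bigl(\dashint_Q|f|^r\bigr)^{1/r}\lesssim\|F\|_{\bar\phi_1,Q}^{1/r}\|v_1^{-r/p_1}\|_{\phi_1,Q}^{1/r}$, its $g$-analogue, and $\int_Qhu\lesssim|Q|\,\|H\|_{\bar\psi,Q}\|u^{1/q}\|_{\psi,Q}$. Substituting, and invoking the bump hypothesis \eqref{eq22} to swallow the product $\|u^{1/q}\|_{\psi,Q}\|v_1^{-r/p_1}\|_{\phi_1,Q}^{1/r}\|v_2^{-s/p_2}\|_{\phi_2,Q}^{1/s}$, the estimate becomes at most a constant times
\[
\sum_{k,j}|Q_j^k|^{1+1/p-1/q}\|F\|_{\bar\phi_1,Q_j^k}^{1/r}\|G\|_{\bar\phi_2,Q_j^k}^{1/s}\|H\|_{\bar\psi,Q_j^k}.
\]
Passing from $|Q_j^k|$ to $|E_j^k|$ via sparsity and using the disjointness of $\{E_j^k\}$ then converts this sum into $\int\sup_{Q\ni x}\bigl[|Q|^{1/p-1/q}\|F\|_{\bar\phi_1,Q}^{1/r}\|G\|_{\bar\phi_2,Q}^{1/s}\|H\|_{\bar\psi,Q}\bigr]\,dx$, which I would split factorwise (placing $|Q|^{1/p-1/q}$ on the $H$-factor) to majorize by $\int M_{\tilde\phi_1}(F^{1/r})\cdot M_{\tilde\phi_2}(G^{1/s})\cdot M_{\bar\psi,\delta}(H)\,dx$, where $\tilde\phi_1(t)=\bar\phi_1(t^r)$, $\tilde\phi_2(t)=\bar\phi_2(t^s)$, and $\delta=n(1/p-1/q)$. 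A change of variables shows that $\bar\phi_1\in B_{p_1/r}$ implies $\tilde\phi_1\in B_{p_1}$ (and similarly for $\tilde\phi_2$), so $M_{\tilde\phi_i}:L^{p_i}\to L^{p_i}$; meanwhile $\bar\psi\in B_{q'}$ combined with a Hedberg-type interpolation gives $M_{\bar\psi,\delta}:L^{q'}\to L^{p'}$ (note $1/q'-1/p'=1/p-1/q=\delta/n$). Finally, because $1/p_1+1/p_2+1/p'=1/p+1/p'=1$, H\"older's inequality closes the estimate using $\|F^{1/r}\|_{L^{p_1}}=\|f\|_{L^{p_1}(v_1)}$, $\|G^{1/s}\|_{L^{p_2}}=\|g\|_{L^{p_2}(v_2)}$, and $\|H\|_{L^{q'}}=1$.

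\textbf{Main obstacle.}
The principal technical hurdle will be securing the sparsity $|Q_j^k|\lesssim|E_j^k|$ for the Calder\'on--Zygmund cubes associated to the bilinear fractional maximal, which I expect to handle by choosing the stopping-time multiplier $a$ sufficiently large or, equivalently, by passing to a Lerner-style principal-cube subcollection. A secondary but standard ingredient is the fractional Orlicz maximal bound $M_{\bar\psi,\delta}:L^{q'}\to L^{p'}$ from the single condition $\bar\psi\in B_{q'}$, which follows from the pointwise interpolation $M_{\bar\psi,\delta}H(x)\lesssim\|H\|_{L^{q'}}^{\delta q'/n}\,M_{\bar\psi}H(x)^{1-\delta q'/n}$ combined with the $L^{q'}$-boundedness of $M_{\bar\psi}$.
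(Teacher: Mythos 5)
Your decomposition, duality, and bump-absorption steps match the paper's proof of Theorem \ref{thmH} exactly: reduce to a single dyadic grid via the one-third trick, dualize on $L^q(u)$, perform the Calder\'on--Zygmund stopping-time with cubes $Q_j^k$ and disjoint majorants $E_j^k$ satisfying $|Q_j^k|\lesssim|E_j^k|$, apply generalized H\"older to each local average, and invoke condition \eqref{eq22} to eliminate the weight bumps, arriving at $\sum_{k,j}|Q_j^k|^{1/p+1/q'}\|f^rv_1^{r/p_1}\|_{\bar\phi_1,Q_j^k}^{1/r}\|g^sv_2^{s/p_2}\|_{\bar\phi_2,Q_j^k}^{1/s}\|hu^{1/q'}\|_{\bar\psi,Q_j^k}$. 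The only genuine divergence is in how this final sum is closed. The paper replaces $|Q_j^k|^{1/p+1/q'}$ by $|E_j^k|^{1/p_1}|E_j^k|^{1/p_2}|E_j^k|^{1/q'}$ and applies Proposition \ref{thm9}, a discrete H\"older inequality for exponents $(p_1,p_2,q')$ whose reciprocals sum to $\geqslant 1$ (valid because $p\leqslant q$), which reduces matters to the $L^{p_1/r}$-, $L^{p_2/s}$-, and $L^{q'}$-boundedness of $M_{\bar\phi_1}$, $M_{\bar\phi_2}$, $M_{\bar\psi}$. You instead peel off only $|E_j^k|$, keep $|Q_j^k|^{1/p-1/q}$ attached to the $h$-factor to form a fractional Orlicz maximal $M_{\bar\psi,\delta}$ with $\delta=n(1/p-1/q)$, and close with \emph{exact} H\"older over $(p_1,p_2,p')$. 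This route requires the off-diagonal bound $M_{\bar\psi,\delta}:L^{q'}\to L^{p'}$, which your Hedberg-type pointwise interpolation correctly produces; the needed ingredient $\bar\psi(t)\lesssim t^{q'}$ for large $t$ does follow from $\bar\psi\in B_{q'}$, since otherwise one could choose $t_k$ with $t_{k+1}>2t_k$ and $\bar\psi(t_k)/t_k^{q'}\to\infty$, making $\int_c^\infty\bar\psi(t)t^{-q'-1}\,dt$ diverge over the disjoint intervals $[t_k,2t_k]$. Both closings are correct. Your version avoids the paper's nonstandard three-term H\"older (Proposition \ref{thm9}) but needs an $L^{q'}\to L^{p'}$ fractional Orlicz maximal estimate the paper never states; the paper's version is self-contained given its preliminaries and uses only single-exponent maximal bounds.
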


So far, we have seen that the weak type weighted boundedness for $\M^{r,s}_{\alpha}$ is equivalent to condition \eqref{eq21}. Since strong type boundedness implies weak type one, it obviously implies condition \eqref{eq21}. In order to get the other way around, besides the stricter requirements that $p_1>r$ and $p_2>s$, we have to ``bump" up our condition on the weights by using the Orlicz norms as appeared in condition \eqref{eq22} of Theorem \ref{thmH}. However, things become much nicer in 1-weight settings [i.e. when $u^\frac{1}{q}=v_1^\frac{1}{p_1}v_2^\frac{1}{p_2}$].

\begin{theorem} \label{thmI}
Suppose $0\leqslant\alpha<n$, $p_1>r>1$, $p_2>s>1$, $\frac{1}{q}=\frac{1}{p}-\frac{\alpha}{n}$. Then the inequality
$$
\|\M^{r,s}_{\alpha}(f,g)\|_{L^q(w_1^\frac{q}{p_1}w_2^{\frac{q}{p_2}})} \lesssim \left\|f\right\|_{L^{p_1}(w_1)} \left\|g\right\|_{L^{p_2}(w_2)}
$$
holds if and only if the weights $(w_1,w_2)$ satisfy
\begin{equation}\label{onew}\sup_Q \left(\dashint_Q w_1^\frac{q}{p_1}w_2^{\frac{q}{p_2}}\right)^{\frac1q} \left(\dashint_Q{w_1^{-\frac{r}{p_1-r}}}\right)^{\frac{p_1-r}{rp_1}} \left(\dashint_Q{w_2^{-\frac{s}{p_2-s}}}\right)^{\frac{p_2-s}{sp_2}}<\infty.\end{equation}
\end{theorem}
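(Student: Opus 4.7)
The plan is to prove the two directions separately.

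\emph{Necessity.} This is handled by a standard testing argument. Fix a cube $Q$ and set $f = w_1^{-1/(p_1-r)}\chi_Q$ and $g = w_2^{-1/(p_2-s)}\chi_Q$; a short calculation gives
$$\left(\dashint_Q f^r\right)^{1/r}=\left(\dashint_Q w_1^{-\frac{r}{p_1-r}}\right)^{1/r},\qquad \|f\|_{L^{p_1}(w_1)}=\left(\int_Q w_1^{-\frac{r}{p_1-r}}\right)^{1/p_1},$$
with symmetric formulas for $g$. Pointwise on $Q$ we have $\M_\alpha^{r,s}(f,g)\geqslant |Q|^{\alpha/n}(\dashint_Q f^r)^{1/r}(\dashint_Q g^s)^{1/s}$, and inserting this lower bound into the assumed strong-type inequality, together with the scaling identity $\alpha/n+1/q-1/p=0$, rearranges exactly into \eqref{onew}.

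\emph{Sufficiency.} Here the plan is to deduce the strong bound from Theorem~\ref{thmH} after upgrading the power bumps in \eqref{onew} to Orlicz bumps. I would choose
$$\psi(t)=t^q\log(e+t)^{q-1+\delta},\qquad \phi_1(t)=t^{(p_1/r)'}\log(e+t)^{(p_1/r)'-1+\delta},\qquad \phi_2(t)=t^{(p_2/s)'}\log(e+t)^{(p_2/s)'-1+\delta}$$
for some small $\delta>0$; routine Young-function calculations then confirm $\bar\psi\in B_{q'}$, $\bar\phi_1\in B_{p_1/r}$, and $\bar\phi_2\in B_{p_2/s}$, so these are admissible in Theorem~\ref{thmH}. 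It therefore suffices to verify the bumped condition \eqref{eq22} for this triple.

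Set $u:=w_1^{q/p_1}w_2^{q/p_2}$, $\sigma_1:=w_1^{-r/(p_1-r)}$, and $\sigma_2:=w_2^{-s/(p_2-s)}$. The crux is to show that \eqref{onew} forces each of $u,\sigma_1,\sigma_2$ to lie in $A_\infty$. Granted this, each of these weights satisfies a reverse H\"older inequality that upgrades power averages to the chosen Orlicz norms, yielding
$$\|u^{1/q}\|_{\psi,Q}\lesssim \left(\dashint_Q u\right)^{1/q},\qquad \bigl\|w_1^{-r/p_1}\bigr\|_{\phi_1,Q}\lesssim \left(\dashint_Q \sigma_1\right)^{\frac{p_1-r}{p_1}},\qquad \bigl\|w_2^{-s/p_2}\bigr\|_{\phi_2,Q}\lesssim \left(\dashint_Q \sigma_2\right)^{\frac{p_2-s}{p_2}}.$$
Substituting these bounds into \eqref{onew} produces exactly \eqref{eq22}, at which point Theorem~\ref{thmH} finishes the proof.

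The main obstacle I expect is the implication ``\eqref{onew}$\Longrightarrow u,\sigma_1,\sigma_2\in A_\infty$'', the multilinear analog of the classical fact that every $A_p$ weight is $A_\infty$. My plan for this step is to test \eqref{onew} on pairs of the form $(\chi_E,\sigma_2^{1/s})$, $(\sigma_1^{1/r},\chi_E)$, and $(\sigma_1^{1/r},\sigma_2^{1/s})$ for measurable $E\subset Q$; combining the resulting inequalities with H\"older produces comparisons of the form $u(E)/u(Q)\lesssim (|E|/|Q|)^\theta$ for some $\theta>0$ (and symmetric statements for $\sigma_1,\sigma_2$), one of the equivalent characterizations of $A_\infty$. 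Once this step is secured, the passage from \eqref{onew} to \eqref{eq22} via reverse H\"older and Orlicz averaging is routine bookkeeping.
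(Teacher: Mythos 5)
Your overall strategy is correct and, for the sufficiency direction, it takes a somewhat different route from the paper's. For necessity, the testing argument you give (with $f=w_1^{-1/(p_1-r)}\chi_Q$, $g=w_2^{-1/(p_2-s)}\chi_Q$) is sound and is exactly the standard computation; the paper does not write this out explicitly, instead relying on the chain ``strong $\Rightarrow$ weak $\Rightarrow$ condition'' already established in Theorem~\ref{thmG}.

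For sufficiency, the paper does not prove the $A_\infty$ membership of $u=w_1^{q/p_1}w_2^{q/p_2}$, $\sigma_1=w_1^{-r/(p_1-r)}$, $\sigma_2=w_2^{-s/(p_2-s)}$ from scratch. It observes that after the substitutions $W_i=w_i^{1/p_i}$, condition~\eqref{onew} is an $A_{\vec P,q}$ condition in disguise, so Theorem~\ref{thm5} (quoted from \cite{K. Moen multilinear}) immediately gives the stronger conclusions $u\in A_{2q}$, $\sigma_1\in A_{2p_1r/(p_1-r)}$, $\sigma_2\in A_{2p_2s/(p_2-s)}$. It then applies reverse H\"older and chooses power bumps $\psi(t)=t^{mq}$, $\phi_1(t)=t^{mp_1/(p_1-r)}$, $\phi_2(t)=t^{mp_2/(p_2-s)}$ rather than your log bumps (both families are admissible, since $\bar\psi\in B_{q'}$ etc.\ hold for either). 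So both proofs route through Theorem~\ref{thmH}; the difference is that the paper quotes the $A_{\vec P,q}\Rightarrow A_\infty$ structure theorem, while you propose to reprove it.

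Your step that is only sketched is exactly that structural implication, and the phrase ``test \eqref{onew} on pairs of the form $(\chi_E,\sigma_2^{1/s})$\ldots'' is not quite the right description: \eqref{onew} is a supremum condition on the weights, not an operator inequality you can test functions against. What you actually want is the H\"older argument: for $E\subset Q$ write
$$\frac{|E|}{|Q|}=\dashint_Q \chi_E\,u^{1/q}\sigma_1^{a_1}\sigma_2^{a_2}\cdot\bigl(u^{1/q}\sigma_1^{a_1}\sigma_2^{a_2}\bigr)^{-1},\qquad a_1=\tfrac{p_1-r}{rp_1},\ a_2=\tfrac{p_2-s}{sp_2},$$
note that the second factor is identically $1$ because of the relation $u^{1/q}=w_1^{1/p_1}w_2^{1/p_2}$, and then apply H\"older with exponents $q$, $1/a_1$, $1/a_2$ (whose reciprocals sum to $1-\alpha/n\leqslant 1$), keeping $\chi_E$ with whichever factor you want to isolate. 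Keeping it with $u$ and using \eqref{onew} yields $|E|/|Q|\lesssim\bigl(u(E)/u(Q)\bigr)^{1/q}$, hence $u\in A_\infty$; keeping it with $\sigma_1$ yields $|E|/|Q|\lesssim\bigl(\sigma_1(E)/\sigma_1(Q)\bigr)^{a_1}$, hence $\sigma_1\in A_\infty$; and symmetrically for $\sigma_2$. With that lemma in hand, your reverse-H\"older upgrade to the bumped condition~\eqref{eq22} and the application of Theorem~\ref{thmH} go through as you describe. So the proposal is correct once this step is filled in, and it is the honest content that the paper outsourced to Theorem~\ref{thm5}.
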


Roughly speaking, when $p_1>r$ and $p_2>s$, in the multiple weight setting, $(u,v_1,v_2)$ we have
$$
\text{Strong bound for} \hspace{1mm} \M^{r,s}_\alpha \hspace{1mm} \Rightarrow \hspace{1mm} \text{Weak bound for} \hspace{1mm} \M^{r,s}_\alpha \hspace{1mm} \Leftrightarrow \hspace{1mm} \text{Condition \eqref{eq21}}
$$
and in vector weight setting $(w_1,w_2)$ and $u=w_1^{\frac{q}{p_1}}w_2^{\frac{q}{p_2}}$ we have
$$
\text{Strong bound for} \hspace{1mm} \M^{r,s}_\alpha \hspace{1mm} \Leftrightarrow \hspace{1mm} \text{Weak bound for} \hspace{1mm} \M^{r,s}_\alpha \hspace{1mm} \Leftrightarrow \hspace{1mm} \text{Condition\eqref{onew}}.
$$

As an immediate consequence of Theorems \ref{thmF} and \ref{thmI}, we have the following result.

\begin{cor}
Under the same assumptions as in Theorem \ref{thmI}, condition \eqref{onew} implies
$$
\|\mathsf{BI}_{\alpha}(f,g)\|_{L^q(w_1^\frac{q}{p_1}w_2^{\frac{q}{p_2}})} \lesssim \left\|f\right\|_{L^{p_1}(w_1)} \left\|g\right\|_{L^{p_2}(w_2)}
$$
for all $f\in L^{p_1}(w_1)$ and $g\in L^{p_2}(w_2)$.
\end{cor}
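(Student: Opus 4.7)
The plan is straightforward since the corollary is explicitly advertised as an immediate consequence of Theorems \ref{thmF} and \ref{thmI}. Set $w=w_1^{q/p_1}w_2^{q/p_2}$. First I would verify that $w\in A_\infty$, then chain together Theorems \ref{thmF} and \ref{thmI}:
\begin{equation*}
\|\mathsf{BI}_\alpha(f,g)\|_{L^q(w)}\lesssim \|\M_\alpha^{r,s}(f,g)\|_{L^q(w)}\lesssim \|f\|_{L^{p_1}(w_1)}\|g\|_{L^{p_2}(w_2)}.
\end{equation*}
The only real content is therefore the verification that condition \eqref{onew} forces $w\in A_\infty$.

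To establish $w\in A_\infty$, I would in fact show $w\in A_t$ for the explicit value $t=1+q(1-\tfrac1p)>1$ (which is $>1$ because $p>1$). Starting from
\begin{equation*}
w^{-1/(t-1)}=w_1^{-q/((t-1)p_1)}\,w_2^{-q/((t-1)p_2)},
\end{equation*}
I apply H\"older's inequality on $Q$ with conjugate exponents
\begin{equation*}
a=\frac{r(t-1)p_1}{q(p_1-r)},\qquad b=\frac{s(t-1)p_2}{q(p_2-s)},
\end{equation*}
which are chosen precisely so that the power of $w_1$ after H\"older becomes $-r/(p_1-r)$ and the power of $w_2$ becomes $-s/(p_2-s)$. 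A direct computation using $1/r+1/s=1$ and $1/p_1+1/p_2=1/p$ shows that $1/a+1/b=1$ is equivalent to the choice $t-1=q(1-1/p)$. With these exponents,
\begin{equation*}
\left(\dashint_Q w\right)\!\left(\dashint_Q w^{-1/(t-1)}\right)^{\!t-1}\!\leq\! \left(\dashint_Q w\right)\!\left(\dashint_Q w_1^{-r/(p_1-r)}\right)^{\!q(p_1-r)/(rp_1)}\!\left(\dashint_Q w_2^{-s/(p_2-s)}\right)^{\!q(p_2-s)/(sp_2)},
\end{equation*}
which is exactly the $q$-th power of the quantity in \eqref{onew} and is therefore uniformly bounded in $Q$. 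Hence $w\in A_t\subset A_\infty$.

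With $w\in A_\infty$ in hand, Theorem \ref{thmF} gives the first inequality above, and Theorem \ref{thmI} gives the second (its hypotheses are exactly the hypotheses of the corollary), completing the proof. The only step requiring thought is the bookkeeping of the H\"older exponents in the $A_t$ verification; beyond that, the argument is a direct concatenation of the two cited theorems.
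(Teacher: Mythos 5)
Your proof is correct. The high-level plan (chain Theorem \ref{thmF} with Theorem \ref{thmI}) is exactly what the paper intends, and the only nontrivial step is indeed establishing $w=w_1^{q/p_1}w_2^{q/p_2}\in A_\infty$. Where you diverge from the paper is in how you verify this. The paper does not spell out a proof of the corollary, but in its proof of Theorem \ref{thmI} it observes that condition \eqref{onew} is precisely an $A_{\vec{P},q}$-type condition (with $P_1'=\tfrac{rp_1}{p_1-r}$, $P_2'=\tfrac{sp_2}{p_2-s}$), and then invokes Theorem \ref{thm5} to conclude $w\in A_{2q}\subset A_\infty$. You instead give a self-contained, elementary H\"older computation showing $w\in A_t$ with $t=1+q(1-\tfrac1p)$, which I checked: the exponents $a,b$ are positive, $1/a+1/b=1$ reduces via $1/r+1/s=1$ and $1/p_1+1/p_2=1/p$ to the stated value of $t$, and the resulting product is exactly the $q$-th power of the supremand in \eqref{onew}. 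Your route avoids the machinery of Theorem \ref{thm5} and in fact yields a quantitatively sharper $A_t$ membership since $1+q(1-\tfrac1p)<2q$ whenever $p>1$ and $q\geqslant p$. One minor remark: you implicitly use that $(r,s)$ is a H\"older pair (needed both for Theorem \ref{thmF} and in your computation); Theorem \ref{thmI} as stated does not make this explicit, but it is the standing assumption throughout, so this is not a real gap.
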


Finally, we end with an application of our estimates.  The associated maximal operator to the bilinear Hilbert transform is defined as
$$\mathsf{BM}(f,g)(x)=\sup_{r>0}\frac{1}{(2r)^n}\int_{[-r,r]^n}|f(x-y)g(x+y)|\,dy.$$
In the one dimensional case, this operator is studied in \cite{M. Lacey}, where it is shown that it satisfies
$$\mathsf{BM}:L^{p_1}(\R)\times L^{p_2}(\R)\rightarrow L^p(\R)$$
when $\frac1p=\frac1{p_1}+\frac1{p_2}$ and $p>2/3$.  Surprisingly, and contrary to the usual paradigm in harmonic analysis, the boundedness of the bilinear Hilbert transform was shown first and used to prove the boundedness of $\mathsf{BM}.$   Other than trivial conditions on the weights (i.e., assuming separate conditions on the weights such as {\it both} $w_1$ and $w_2$ belong to $A_p$ there are no known weighted estimates for $\mathsf{BM}$.  By H\"older's inequality we have that
$$\mathsf{BM}(f,g)(x)\leq \M^{r,s}(f,g)(x)$$
for any H\"older's pair of exponents $r$ and $s$ and therefore have the following corollaries.

\begin{cor} \label{BMtw}
Suppose $p_1>r>1$, $p_2>s>1$, $1<p=\frac{p_1p_2}{p_1+p_2}$. If $(u,v_1,v_2)$ satisfies
\begin{equation} 
    \sup_Q \bigl\|u^\frac{1}{p}\bigr\|_{\psi,Q}\thinspace\bigl\|v_1^{-\frac{r}{p_1}}\bigr\|_{\phi_1,Q}^\frac{1}{r}\thinspace\bigl\|v_2^{-\frac{s}{p_2}}\bigr\|_{\phi_2,Q}^\frac{1}{s} < \infty
\end{equation}
where $\psi,\phi_1,\phi_2$ are Young functions satisfying $\bar{\psi}\in B_{p'}$, $\bar{\phi}_1\in B_{\frac{p_1}{r}}$ and $\bar{\phi}_2\in B_{\frac{p_2}{s}}$, then the inequality
$$
\|\mathsf{BM}(f,g)\|_{L^p(u)} \lesssim \left\|f\right\|_{L^{p_1}(v_1)} \left\|g\right\|_{L^{p_2}(v_2)}
$$
holds for all $f\in L^{p_1}(v_1)$ and $g\in L^{p_2}(v_2)$.
\end{cor}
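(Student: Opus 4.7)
The plan is to deduce this corollary directly from the pointwise domination $\mathsf{BM}(f,g)(x) \leq \M^{r,s}(f,g)(x)$ (flagged just above the statement) combined with the strong-type weighted bound for the maximal operator $\M^{r,s}_0$ supplied by Theorem \ref{thmH} in the limiting case $\alpha = 0$, $q = p$.

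First I would justify the pointwise bound carefully. Fix $x \in \R^n$ and $t > 0$, and let $Q_t = x + [-t,t]^n$, a cube of side length $2t$ centered at $x$. Translating variables $y \mapsto y - x$ and $y \mapsto y + x$ in the two factors, and applying H\"older's inequality with the conjugate exponents $r$ and $s$, yields
$$\frac{1}{(2t)^n}\int_{[-t,t]^n}|f(x-y)g(x+y)|\,dy \leq \left(\dashint_{Q_t}|f|^r\right)^{1/r}\left(\dashint_{Q_t}|g|^s\right)^{1/s}.$$
Taking the supremum over $t > 0$ produces a cube containing $x$ on the right-hand side, so $\mathsf{BM}(f,g)(x) \leq \M^{r,s}_0(f,g)(x)$.

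Second, I would invoke Theorem \ref{thmH} with the specialization $\alpha = 0$ and $q = p$. With these choices, the prefactor $|Q|^{\alpha/n + 1/q - 1/p}$ collapses to $1$, the Orlicz-bump functional becomes exactly the one in the hypothesis of the corollary, and the $B$-class constraints $\bar\psi \in B_{q'} = B_{p'}$, $\bar\phi_1 \in B_{p_1/r}$, $\bar\phi_2 \in B_{p_2/s}$ match verbatim. The range hypothesis $1 < p \leq q$ of Theorem \ref{thmH} is satisfied in the borderline form $p = q$. Hence Theorem \ref{thmH} delivers
$$\|\M^{r,s}_0(f,g)\|_{L^p(u)} \lesssim \|f\|_{L^{p_1}(v_1)}\|g\|_{L^{p_2}(v_2)},$$
and feeding the pointwise bound into the left side completes the proof.

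No genuine obstacle is anticipated, since both ingredients are already in hand. The only delicate point is verifying that the $q = p$ endpoint of Theorem \ref{thmH} is actually allowed (it is, as the statement reads ``$1 < p \leq q < \infty$'') and that the change-of-variables step in the H\"older estimate really does recover an average over a cube centered at $x$; both are routine.
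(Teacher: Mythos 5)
Your proposal is correct and is precisely the argument the paper intends: the pointwise bound $\mathsf{BM}(f,g)\leq\M^{r,s}(f,g)$ via H\"older (which the paper flags immediately before the corollary) combined with Theorem \ref{thmH} specialized to $\alpha=0$, $q=p$. The paper leaves both steps implicit, and your write-up of the change of variables and the collapse of the $|Q|^{\alpha/n+1/q-1/p}$ factor fills in the details accurately.
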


Finally we end with a one vector weight theorem.  In this case we will take the natural definition of $r=\frac{p_1}{p}$ and $s=\frac{p_2}{p}$.  

\begin{cor}
Suppose $p_1,p_2>1$, and $(w_1,w_2)$ are weights satisfying
\begin{equation} \label{onevecwp=q}
    \sup_Q\left(\dashint_Qw_1^\frac{p}{p_1}w_2^\frac{p}{p_2}\right)^\frac{1}{p} \left(\dashint_Qw_1^\frac{1}{1-p}\right)^\frac{p-1}{p_1} \left(\dashint_Qw_2^\frac{1}{1-p}\right)^\frac{p-1}{p_2} < \infty
\end{equation}
where $\left(\dashint_Qw_i^\frac{1}{1-p}\right)^{p-1}=(\inf_Qw_i)^{-1}$ when $p=1$, $i\in\{1,2\}$.\\
Then, $\mathsf{BM}$ is bounded from $L^{p_1}(w_1)\times L^{p_2}(w_2)$ to $L^{p,\infty}(w_1^{\frac{p}{p_1}}w_2^{\frac{p}{p_2}})$ whenever  $p\geqslant1$. \\
Moreover, $\mathsf{BM}$ is bounded from $L^{p_1}(w_1)\times L^{p_2}(w_2)$ to $L^{p}(w_1^{\frac{p}{p_1}}w_2^{\frac{p}{p_2}})$  whenever  $p>1$.
\end{cor}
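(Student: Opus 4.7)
My plan is to reduce the corollary to the previously established bounds for the bilinear maximal function $\M^{r,s}$ via the pointwise domination $\mathsf{BM}(f,g)(x) \leq \M^{r,s}(f,g)(x)$, valid for every H\"older pair $(r,s)$. The canonical choice, dictated by matching exponents, is $r = p_1/p$ and $s = p_2/p$. A routine check gives $1/r + 1/s = p/p_1 + p/p_2 = 1$, that $r, s > 1$ (from $1/p = 1/p_1 + 1/p_2$), and that $r \leq p_1$ and $s \leq p_2$ exactly when $p \geq 1$, with both inequalities strict when $p > 1$.

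Next I would verify that with the identification $u := w_1^{p/p_1}w_2^{p/p_2}$, $v_i := w_i$, $q := p$, and $\alpha := 0$, the algebraic identities $r/(p_1-r) = 1/(p-1)$, $(p_1-r)/(rp_1) = (p-1)/p_1$, and their analogues for $s$, make hypothesis \eqref{onevecwp=q} coincide verbatim with condition \eqref{eq21} of Theorem \ref{thmG} and with condition \eqref{onew} of Theorem \ref{thmI}. Once these identifications are in place, the strong $L^p$ bound for $p > 1$ is immediate from Theorem \ref{thmI} (its hypotheses $p_1 > r > 1$ and $p_2 > s > 1$ reduce to $p > 1$), and the weak $L^{p,\infty}$ bound for $p > 1$ is immediate from Theorem \ref{thmG}; combining each with the pointwise domination finishes both cases.

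The hard part is the weak-type endpoint $p = 1$, which lies outside the range $p > 1$ required by Theorem \ref{thmG}. There one is forced to take the boundary pair $r = p_1$, $s = p_2$; the convention recorded after \eqref{onevecwp=q} then turns the two dual-exponent averages into essential infima, reducing the hypothesis to the $A_1$-type condition
$$\sup_Q \left(\dashint_Q w_1^{1/p_1}w_2^{1/p_2}\right)(\inf_Q w_1)^{-1/p_1}(\inf_Q w_2)^{-1/p_2} < \infty$$
on the product weight $w_1^{1/p_1}w_2^{1/p_2}$. I would handle this case directly: a Vitali selection from the level set $\{\M^{p_1,p_2}(f,g) > \lambda\}$ produces disjoint maximal cubes $Q_j$ on which $(\dashint_{Q_j}|f|^{p_1})^{1/p_1}(\dashint_{Q_j}|g|^{p_2})^{1/p_2} > \lambda$; the $A_1$-type hypothesis then converts the weighted measure of the level set into a sum of unweighted integrals of $|f|^{p_1}$ and $|g|^{p_2}$ over the $Q_j$, and H\"older's inequality in the conjugate exponents $(p_1,p_2)$ yields the required bound by $\lambda^{-1}\|f\|_{L^{p_1}(w_1)}\|g\|_{L^{p_2}(w_2)}$, which transfers to $\mathsf{BM}$ through the pointwise domination.
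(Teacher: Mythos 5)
Your proposal is correct and takes the route the paper itself signals: immediately before the corollary the authors announce $r=p_1/p$, $s=p_2/p$, and the result is evidently meant to follow from the pointwise bound $\mathsf{BM}\leq \M^{r,s}$ together with Theorems \ref{thmG} and \ref{thmI}. Your verification that with these choices condition \eqref{onevecwp=q} is literally condition \eqref{eq21} (and \eqref{onew}) with $\alpha=0$, $q=p$, $u=w_1^{p/p_1}w_2^{p/p_2}$ is exactly right, and the range checks $r>1$, $s>1$, $r\leq p_1$, $s\leq p_2$ are correct.

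The one substantive thing you add, and correctly so, is the endpoint $p=1$. Theorem \ref{thmG} as stated assumes $1<p$, so it does not literally deliver the weak bound at $p=1$, and you are right to supply a direct argument. In fact the proof of Theorem \ref{thmG} goes through essentially verbatim at $p=q=1$, $r=p_1$, $s=p_2$ once one invokes the stated $\inf$-convention (the step $\sum a_j^{q/p}\le(\sum a_j)^{q/p}$ becomes an equality, and the H\"older step on the averages degenerates to the $L^\infty/\inf$ bound), which is the computation you describe; the paper just never says this. One minor technical remark: if you run the decomposition for the dyadic version $\M^{r,s,\mathscr{D}}$ and reduce to it via Lerner's grid lemma (Theorem \ref{thm1}), as the paper does in the proof of Theorem \ref{thmG}, the maximal cubes are automatically disjoint and you avoid the dilation loss inherent in a Vitali selection; either route is fine, but the dyadic one matches the paper's conventions and is cleaner.
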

In this paper, we will first give proof for Theorems \ref{thmA}, \ref{thmB} and \ref{thmC}. Theorems \ref{thmE} and \ref{thmF} can be proved using similar techniques as those in the proof of theorems \ref{thmB} and \ref{thmC}, respectively. The later Threorems are in fact easier than the former and follow from the exact techniques, so we choose to leave them for interested readers. We then will sketch the proof for Theorems \ref{thmG}, \ref{thmH} and \ref{thmI}, and end with an application of our results: a bilinear Stein-Weiss inequality.

\hfill

\section{Preliminaries}
A dyadic grid $\mathscr{D}$ is a countable collection of cubes that satisfies the following properties:

\hspace{0.5cm} (1) $Q\in \mathscr{D} \Rightarrow \ell(Q)=2^k$ for some $k\in\mathbb{Z}$.

\hspace{0.5cm} (2) For each $k\in\mathbb{Z}$, the set $\{Q\in\mathscr{D}:\thinspace \ell(Q)=2^k\}$ forms a partition of $\mathbb{R}^n$.

\hspace{0.5cm} (3) $Q,P\in \mathscr{D} \Rightarrow Q\cap P \in \{\emptyset,P,Q\}$.

One very clear example for this concept is the dyadic grid that is formed by translating and then dilating the unit cube $[0,1)^n$ all over $\mathbb{R}^n$. More precisely, it is formulated as
$$
\mathscr{D}=\left\{2^{-k}\left([0,1)^n+m\right):\thinspace k\in\mathbb{Z},m\in \mathbb{Z}^n\right\}.
$$

In practice, we also make extensive use of the following family of dyadic grids.
$$
\mathscr{D}^t=\left\{2^{-k}\left([0,1)^n+m+(-1)^kt\right):\thinspace k\in\mathbb{Z},m\in \mathbb{Z}^n\right\}, \quad t\in\{0,1/3\}^n.
$$

Lerner \cite{A. K. Lerner} proved the following result.

\begin{theorem} \label{thm1}
Given any cube $Q$ in $\mathbb{R}^n$, there exists a $t\in \{0,1/3\}^n$ and a cube $Q_t\in\mathscr{D}^t$ such that $Q\subset Q_t$ and $\ell(Q_t)\leqslant 6\thinspace \ell(Q)$.
\end{theorem}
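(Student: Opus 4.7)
The plan is to use the so-called ``one-third trick'' coordinate by coordinate, so the heart of the argument is a one-dimensional claim applied $n$ times.

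First, given the cube $Q$, choose the integer $k\in\mathbb{Z}$ satisfying $2^{k-1}\le 3\ell(Q)<2^k$; equivalently $\ell(Q)<2^k/3$ and $2^k\le 6\ell(Q)$. This will be the common side length of the candidate dyadic cube $Q_t$, so the side-length bound $\ell(Q_t)\le 6\ell(Q)$ is automatic. The work is in locating an appropriate translate, one coordinate at a time.

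Next, for each coordinate $i\in\{1,\dots,n\}$ I would verify the following one-dimensional statement: if $J$ is any interval of length $\ell\le 2^k/3$, then there exists $t_i\in\{0,1/3\}$ and $m_i\in\mathbb{Z}$ such that $J$ is contained in the interval $[\,2^k m_i+2^k(-1)^k t_i,\,2^k m_i+2^k(-1)^k t_i + 2^k\,)$, which is exactly the one-dimensional slice of a cube in $\mathscr{D}^{t_i}$ of side length $2^k$. The proof is elementary: the union of the ``left endpoints'' generated by $t_i=0$ and $t_i=1/3$ is (up to the sign $(-1)^k$) the set $\{2^k m\}\cup\{2^k m+2^k/3\}$, whose consecutive gaps along the real line are either $2^k/3$ or $2\cdot 2^k/3$. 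Thus any real number $a$ admits a left endpoint of one of the two grids in $[a-2^k+\ell,\,a]$, because this interval has length $2^k-\ell\ge 2\cdot 2^k/3$, which exceeds the largest gap. Taking $a$ to be the left endpoint of $J$ and $m_i$ the corresponding integer shift produces the desired containment.

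Applying this coordinate-wise to the projections of $Q$ onto the $n$ axes yields $t=(t_1,\dots,t_n)\in\{0,1/3\}^n$ and integers $m_1,\dots,m_n$ such that the product cube $Q_t:=\prod_{i=1}^n [2^k m_i+2^k(-1)^k t_i,\,2^k m_i+2^k(-1)^k t_i+2^k)$ lies in $\mathscr{D}^t$, contains $Q$, and has $\ell(Q_t)=2^k\le 6\ell(Q)$. The only step with any content is the one-dimensional combinatorial lemma above; the rest is bookkeeping with the factor $(-1)^k$ in the definition of $\mathscr{D}^t$, so one should be careful to distinguish the parity of $k$ when writing out the translates, but this does not affect the final bound since $\{(-1)^k t:t\in\{0,1/3\}\}$ equals $\{0,1/3\}$ or $\{0,-1/3\}$ and both produce grids with the same gap structure modulo $2^k$.
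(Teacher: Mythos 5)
The paper does not include a proof of this lemma; it cites Lerner directly, so there is no in-paper argument to compare against. Your proof is the standard coordinate-wise ``one-third trick'' and it is correct: fix the scale $2^k$ with $2^{k-1}\le 3\ell(Q)<2^k$, observe that in each coordinate the union of left endpoints coming from the two shifts $t_i\in\{0,1/3\}$ has gaps alternating between $2^k/3$ and $2\cdot 2^k/3$, and argue that a window of length $2^k-\ell$ to the left of the projection of $Q$ must therefore meet this set.

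One small point worth tightening. To place $J=[a,a+\ell]$ inside the half-open dyadic interval $[L,L+2^k)$ you need the strict inequality $L>a-2^k+\ell$, so the admissible window is really $(a-2^k+\ell,\,a]$, not the closed interval you wrote; equivalently, after locating a grid point in your closed window you should take the rightmost one. This is harmless here because your choice of $k$ gives the strict bound $3\ell(Q)<2^k$, hence $2^k-\ell>2\cdot 2^k/3$ strictly, so even the half-open window has length strictly exceeding the largest gap and must contain a grid point (for instance, the largest grid point $\le a$ lies within distance $<2\cdot 2^k/3$ of $a$). With that adjustment the coordinate-wise construction of $Q_t=\prod_i[L_i,L_i+2^k)\in\mathscr{D}^t$ goes through exactly as you describe, and $\ell(Q_t)=2^k\le 6\ell(Q)$ is immediate from the choice of $k$.
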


Next, we are going to give necessary details of Orlicz spaces. For more details, we refer the reader to \cite{D. Cruz-Uribe and J. M. Martell and C. Perez}. A Young function $\Phi:\thinspace [0,\infty)\rightarrow[0,\infty)$ is a continuous, convex and strictly increasing function with $\Phi(0)=0$ and $\frac{\Phi(t)}{t}\rightarrow\infty$ as $t\rightarrow\infty$. Given a Young function, there exists another Young function, denoted as $\bar{\Phi}$ and referred to as the associate function, that satisfies $t\leqslant\Phi^{-1}(t)\thinspace\bar{\Phi}^{-1}(t)\leqslant2t$ when $t>0$. For instance, the Young function $\Phi(t)=t^p$, $p>1$, has its associate Young function $\bar{\Phi}(t)=t^{p'}$ where $\frac{1}{p}+\frac{1}{p'}=1$. There are many more types of Young functions, but the most commonly seen are the ``log-bump" functions $\Phi(t)=t^r\log(e+t)^s$ for some $r>1$ and $s\in\mathbb{R}$.

The Orlicz average of $f$ over a cube $Q$ is given by
$$
\|f\|_{\Phi,Q}=\inf\left\{\lambda>0:\thinspace\dashint_{Q}\Phi\left(\frac{|f(x)|}{\lambda}\right)dx\leqslant1\right\}
$$
which is equivalent to
$$
\|f\|'_{\Phi,Q}=\inf_{\lambda>0}\left\{\lambda+\frac{\lambda}{|Q|}\int_Q\Phi\left(\frac{|f(x)|}{\lambda}\right)dx\right\}.
$$

This result is due to Krasnosel'ski\u{i} and Ruticki\u{i} \cite{M. A. Kranoselskii and J. B. Rutickii}. In fact,
$$
\|f\|_{\Phi,Q}\leqslant\|f\|'_{\Phi,Q}\leqslant2\|f\|_{\Phi,Q}.
$$

The Orlicz maximal function is then defined to be
$$
M_{\Phi}(f)(x)=\sup_{Q\ni x}\|f\|_{\Phi,Q}.
$$

P\'erez \cite{C. Perez} gave a necessary and sufficient condition for the boundedness of these Orlicz maximal operators.

\begin{theorem} \label{thm3}
For any $p\in(1,\infty)$,
$$
\|M_{\Phi}f\|_{L^p(\mathbb{R}^n)}\leqslant C\thinspace \|f\|_{L^p(\mathbb{R}^n)}
$$
if and only if $\thinspace\Phi$ satisfies the $B_p$ integrability condition, i.e. there exists $c>0$ such that
$$
\int_c^{\infty}\frac{\Phi(t)}{t^{p+1}}\thinspace dt < \infty.
$$
\end{theorem}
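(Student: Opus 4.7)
The plan is to prove the two directions independently, with a weak-type Orlicz inequality bridging the sufficiency argument and a test-function computation handling necessity. For sufficiency, I would first observe the set-theoretic identity
$$\{x : M_\Phi f(x) > \lambda\} = \{x : M(\Phi(|f|/\lambda))(x) > 1\},$$
which follows from the equivalence $\|f\|_{\Phi,Q} > \lambda \Leftrightarrow \dashint_Q \Phi(|f|/\lambda) > 1$ and the definition of the Orlicz maximal. The classical weak $(1,1)$ bound for the Hardy--Littlewood maximal operator $M$ then yields the Orlicz weak-type estimate
$$\bigl|\{x : M_\Phi f(x) > \lambda\}\bigr| \leq C\int_{\R^n}\Phi(|f(x)|/\lambda)\,dx.$$

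Plugging this directly into the distribution function formula for $\|M_\Phi f\|_{L^p}^p$ would produce a divergent $\lambda$-integral near $\lambda = 0$ whenever $\Phi$ grows linearly at the origin, so I would apply the standard truncation trick. After rescaling so that $\Phi(1) = 1$ (which does not affect the $B_p$ class), one has $M_\Phi g \leq \|g\|_{L^\infty}$; splitting $f = f\chi_{\{|f|>\lambda/2\}} + f\chi_{\{|f|\leq\lambda/2\}}$ therefore gives $\{M_\Phi f > \lambda\}\subseteq\{M_\Phi(f\chi_{\{|f|>\lambda/2\}}) > \lambda/2\}$. Combining the weak-type estimate with Fubini and the substitution $t = 2|f(x)|/\lambda$ in the inner $d\lambda$ integral, one obtains
$$\|M_\Phi f\|_{L^p}^p \lesssim \int_{\R^n}\int_0^{2|f(x)|}\lambda^{p-1}\Phi(2|f(x)|/\lambda)\,d\lambda\,dx = 2^p\|f\|_{L^p}^p\int_1^\infty \frac{\Phi(t)}{t^{p+1}}\,dt,$$
which is finite by the $B_p$ hypothesis.

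For the necessity I would test the inequality on $f = \chi_{Q_0}$, the indicator of the unit cube $Q_0$. For every cube $Q \supseteq Q_0$ one computes $\|\chi_{Q_0}\|_{\Phi,Q} = 1/\Phi^{-1}(|Q|)$, so for each $x$ with $|x|\geq 2$ there is a cube of side length comparable to $|x|$ containing both $x$ and $Q_0$, giving $M_\Phi(\chi_{Q_0})(x) \gtrsim 1/\Phi^{-1}(c|x|^n)$. The assumed $L^p$ bound applied to $\chi_{Q_0}$ together with a polar-coordinate computation forces $\int_c^\infty du/\Phi^{-1}(u)^p < \infty$. The substitution $u = \Phi(t)$ followed by one integration by parts (the boundary term at infinity must vanish, for otherwise the integrand would not be summable) converts this into $\int_c^\infty \Phi(t)/t^{p+1}\,dt < \infty$, i.e., $\Phi \in B_p$.

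The main obstacle is handling the lower tail in the sufficiency step: the weak-type Orlicz estimate alone is not integrable near $\lambda = 0$ whenever $\Phi$ is linear at the origin. The truncation at level $\lambda/2$ is what produces the lower limit $t = 1$ after the change of variables, so that the resulting integral is exactly the $B_p$ tail. Thus $B_p$ emerges as the precise integrability threshold needed by this argument, not merely a convenient sufficient condition.
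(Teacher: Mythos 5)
The paper does not prove this result; it cites it directly as a theorem of P\'erez (reference \cite{C. Perez}), so there is no in-paper argument to compare against. Your proof is the standard one for P\'erez's characterization of the $B_p$ class, and it is essentially correct. The sufficiency argument -- the identity $\{M_\Phi f>\lambda\}=\{M(\Phi(|f|/\lambda))>1\}$, the weak $(1,1)$ bound for $M$, the truncation $f\chi_{\{|f|>\lambda/2\}}$ using $M_\Phi$ of a bounded function being at most its $L^\infty$ norm after normalizing $\Phi(1)=1$, and the Fubini/substitution $t=2|f(x)|/\lambda$ -- goes through exactly as written and isolates the $B_p$ tail $\int_1^\infty \Phi(t)/t^{p+1}\,dt$ as the relevant constant. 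The necessity argument via $f=\chi_{Q_0}$ and the computation $\|\chi_{Q_0}\|_{\Phi,Q}=1/\Phi^{-1}(|Q|)$ is also correct.

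One remark on the finishing step of necessity. You pass from $\int_c^\infty du/\Phi^{-1}(u)^p<\infty$ to $\int_c^\infty \Phi(t)/t^{p+1}\,dt<\infty$ by substituting $u=\Phi(t)$ and integrating by parts, arguing that ``the boundary term at infinity must vanish.'' The boundary term need not vanish; what you actually need, and what is automatic, is that $\Phi(T)/T^p\geq 0$, so that
$$p\int_c^T \frac{\Phi(t)}{t^{p+1}}\,dt \;\leq\; \frac{\Phi(c)}{c^p}+\int_c^\infty\frac{\Phi'(t)}{t^p}\,dt,$$
and the right side is finite uniformly in $T$. Alternatively, and more cleanly, convexity with $\Phi(0)=0$ gives $\Phi(t)\leq t\,\Phi'(t)$ (where $\Phi'$ is the right derivative), so $\Phi(t)/t^{p+1}\leq \Phi'(t)/t^p$ pointwise, and the implication follows with no integration by parts at all. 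Either patch makes the step airtight.
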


There is also a generalized H\"older inequality for these Orlicz averages.

\begin{lemma} \label{thm4}
If $\Phi,\Psi,\Theta$ are Young functions such that
$$
\Phi^{-1}(t)\Psi^{-1}(t)\lesssim \Theta^{-1}(t), \hspace{2mm} \forall t\geqslant t_0\geqslant0
$$
then
$$
\|fg\|_{\Theta,Q}\lesssim \|f\|_{\Phi,Q} \|g\|_{\Psi,Q}.
$$
In particular, for any Young function $\psi$,
$$
\dashint_{Q}{|f(x)\thinspace g(x)|\thinspace dx}\leqslant2\thinspace\|f\|_{\psi,Q}\thinspace\|g\|_{\bar{\psi},Q}.
$$
\end{lemma}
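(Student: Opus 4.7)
The plan is to reduce the claim to a pointwise Young-type inequality and then integrate. By the homogeneity of Orlicz averages, we may assume after rescaling that $\|f\|_{\Phi,Q}=\|g\|_{\Psi,Q}=1$, which by the definition of the Luxemburg norm gives us $\dashint_{Q}\Phi(|f|)\,dx\leqslant 1$ and $\dashint_{Q}\Psi(|g|)\,dx\leqslant 1$. The goal then reduces to producing a constant $C$ such that $\dashint_{Q}\Theta(|fg|/C)\,dx\leqslant 1$.

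The heart of the argument is the following pointwise estimate: for all $a,b\geqslant 0$ with $\Phi(a)+\Psi(b)\geqslant t_0$, one has $ab\leqslant c\,\Theta^{-1}(\Phi(a)+\Psi(b))$. To see this, set $s=\Phi(a)+\Psi(b)$; then $\Phi(a)\leqslant s$ and $\Psi(b)\leqslant s$ force $a\leqslant\Phi^{-1}(s)$ and $b\leqslant\Psi^{-1}(s)$, so $ab\leqslant\Phi^{-1}(s)\Psi^{-1}(s)\leqslant c\,\Theta^{-1}(s)$ by the hypothesis on the inverses. Applying $\Theta$ and using its convexity yields $\Theta(ab/c)\leqslant\Phi(a)+\Psi(b)$ pointwise in the ``large'' regime. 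With $a=|f(x)|$ and $b=|g(x)|$, integration over $Q$ gives $\dashint_Q\Theta(|fg|/c)\,dx\leqslant 2+\kappa$, where $\kappa$ is a bounded correction term absorbing the regime $\Phi(a)+\Psi(b)<t_0$ (there both $a$ and $b$ are a priori bounded, so $\Theta(ab/c)$ is bounded there and contributes at most $\Theta(\Phi^{-1}(t_0)\Psi^{-1}(t_0)/c)$). Another application of convexity to divide through by $2+\kappa$ yields $\|fg\|_{\Theta,Q}\lesssim 1$, and unscaling gives the stated bound.

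For the ``in particular'' statement with $\Theta(t)=t$, one prefers a sharper route that does not pass through the inverse-product inequality but rather uses the classical Young inequality $ab\leqslant \psi(a)+\bar\psi(b)$ directly (which is equivalent to the defining property of associate Young functions). Setting $\lambda=\|f\|_{\psi,Q}$ and $\mu=\|g\|_{\bar\psi,Q}$, one writes
\begin{equation*}
\dashint_Q\frac{|f(x)\,g(x)|}{\lambda\mu}\,dx\leqslant \dashint_Q\psi\!\left(\frac{|f(x)|}{\lambda}\right)dx+\dashint_Q\bar\psi\!\left(\frac{|g(x)|}{\mu}\right)dx\leqslant 1+1=2,
\end{equation*}
which is exactly the stated inequality with constant $2$.

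The main obstacle is the restriction $t\geqslant t_0$ in the hypothesis on the Young functions: the pointwise Young-type estimate only holds cleanly in the large regime, so one has to absorb a uniformly bounded correction from the small regime. Since this contributes only an additive constant on a cube of finite measure, it can be swept into the final constant via one more convexity estimate, but it is what prevents the sharp constant $2$ from propagating to the general statement.
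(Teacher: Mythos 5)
Your proof is correct. The paper itself does not prove this lemma --- it is stated as known and the reader is referred to the Cruz-Uribe--Martell--P\'erez monograph --- so there is no paper argument to compare against. Your argument is the standard one (going back to O'Neil): normalize, establish the pointwise inequality $ab\leqslant c\,\Theta^{-1}(\Phi(a)+\Psi(b))$ in the large regime via the inverse-product hypothesis, integrate, and absorb the bounded small-regime contribution by one more convexity step; and for the conjugate-pair case, use the Young inequality $ab\leqslant\psi(a)+\bar\psi(b)$ directly to get the sharp constant $2$. One trivial nit: the step $\Theta(ab/c)\leqslant\Phi(a)+\Psi(b)$ follows from the \emph{monotonicity} of $\Theta$ applied to $ab/c\leqslant\Theta^{-1}(s)$, not its convexity --- convexity is only needed later when you rescale by $2+\kappa$.
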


When $p>1$, a weight $w\in A_p$ if and only if
$$
\sup_Q\left(\dashint_Qw\right)\left(\dashint_Qw^{1-p'}\right)^{p-1}<\infty.
$$
When $p=1$, we have $w\in A_1$ if and only if
$$
Mw(x)\leqslant C\thinspace w(x), \qquad a.e. \thinspace x\in \mathbb{R}^n
$$
where $M$ is the Hardy-Littlewood maximal function.  Finally we define $A_\infty$ as the union of all $A_p$ classes for $p>1$.   Also from \cite{J. Duoandikoetxea} we know the following facts.

\begin{lemma} \label{thm10}
If $w\in A_\infty$ then the following hold:

i) for every $\eta\in(0,1)$, there exists $\kappa\in(0,1)$ such that: given a cube $Q$ and

\hfill $S\subseteq Q$ with $|S|\leqslant\eta\thinspace |Q|$, we will also have $w(S)\leqslant\kappa\thinspace w(Q)$;

ii)  there exist an $m>1$ such that
$$
    \left(\dashint_Qw^{m}\right)^{\frac{1}{m}} \leqslant C \thinspace \dashint_Qw.
$$
\end{lemma}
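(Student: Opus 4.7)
The plan is to derive both statements directly from the fact that $A_\infty = \bigcup_{p>1} A_p$, fixing $p>1$ with $w\in A_p$. Part (i) will follow from a single application of H\"older's inequality and the $A_p$ definition, and part (ii) will follow from (i) via a Calder\'on--Zygmund stopping-time argument.

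For part (i), the starting point is the identity $\chi_S = \chi_S\, w^{1/p}\, w^{-1/p}$ on $Q$. Integrating over $Q$ and applying H\"older with exponents $p$ and $p'$ gives
$$
|S| \;\leqslant\; \left(\int_Q \chi_S\, w\right)^{1/p} \left(\int_Q w^{1-p'}\right)^{1/p'}
\;=\; w(S)^{1/p}\left(\int_Q w^{1-p'}\right)^{1/p'}.
$$
Dividing by $|Q|$, rearranging, and invoking $[w]_{A_p}<\infty$ yields the scale-invariant estimate
$$
\left(\frac{|S|}{|Q|}\right)^{p} \;\leqslant\; [w]_{A_p}\,\frac{w(S)}{w(Q)}.
$$
The direction is ``wrong'' (small Lebesgue measure forces small weighted measure only in reverse), so I would apply this inequality to the complement $T = Q\setminus S$, whose Lebesgue density is at least $1-\eta$. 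That yields $w(T)/w(Q) \geqslant [w]_{A_p}^{-1}(1-\eta)^{p}$, and subtracting from $1$ gives $w(S)\leqslant \kappa\, w(Q)$ with $\kappa = 1 - [w]_{A_p}^{-1}(1-\eta)^{p}<1$, proving (i).

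For part (ii), I would run a standard Calder\'on--Zygmund stop-time scheme on $Q$ to prove a reverse H\"older inequality. Fix a cube $Q$, set $\alpha_0 = \dashint_Q w$, and for each level $\alpha > \alpha_0$ perform the CZ decomposition of $w$ at height $\alpha$ relative to $Q$, producing disjoint maximal dyadic subcubes $\{Q_j^{\alpha}\}$ with $\alpha < \dashint_{Q_j^{\alpha}} w \leqslant 2^{n}\alpha$ and $w\leqslant \alpha$ a.e.\ off $\Omega_\alpha := \bigcup_j Q_j^{\alpha}$. The stopping condition gives $|\Omega_\alpha|\leqslant \alpha^{-1} w(Q)$, so for $\alpha$ large enough depending on $\alpha_0$ the relative Lebesgue density $|\Omega_\alpha \cap Q_k^{\alpha_0}|/|Q_k^{\alpha_0}|$ falls below any prescribed $\eta\in(0,1)$. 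Applying part (i) cube-by-cube to the pair $(\Omega_\alpha\cap Q_k^{\alpha_0}, Q_k^{\alpha_0})$ and summing yields a recursion of the form $w(\Omega_{A\alpha})\leqslant \kappa\, w(\Omega_\alpha)$ for a fixed constant $A=A(n)$ and a fixed $\kappa\in(0,1)$, which iterates to exponential decay $w(\Omega_\alpha)\lesssim (\alpha_0/\alpha)^{\gamma}\, w(Q)$ for some $\gamma>0$. Writing
$$
\int_Q w^{1+\epsilon}\, dx \;=\; \epsilon\int_0^\infty \alpha^{\epsilon-1}\, w\bigl(\{x\in Q: w(x)>\alpha\}\bigr)\, d\alpha \;+\; \alpha_0^{\epsilon}\, w(Q),
$$
bounding $w(\{w>\alpha\})\leqslant w(\Omega_\alpha)$ via the Lebesgue differentiation theorem, and choosing $\epsilon<\gamma$, the tail integral converges and is controlled by a multiple of $\alpha_0^{\epsilon} w(Q) = \bigl(\dashint_Q w\bigr)^{\epsilon}\cdot w(Q)$, giving the reverse H\"older inequality with $m=1+\epsilon$ and $C$ depending only on $n$ and $[w]_{A_p}$.

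The main obstacle is the geometric recursion step in part (ii): producing the decay $w(\Omega_{A\alpha})\leqslant\kappa\, w(\Omega_\alpha)$ requires checking that the newer stopping cubes at level $A\alpha$ occupy only a small Lebesgue fraction of each older stopping cube at level $\alpha$, so that part (i) can be applied uniformly. Once the Lebesgue-density dichotomy (small fraction by CZ maximality, then small weighted fraction by (i)) is organized correctly, everything else is a routine distribution-function computation.
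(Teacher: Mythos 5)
Your proof is correct, and since the paper does not prove this lemma at all (it is quoted as a known fact from \cite{J. Duoandikoetxea}), your argument is exactly the standard one from that reference: H\"older plus the $A_p$ condition applied to the complement $Q\setminus S$ for (i), and the Calder\'on--Zygmund stopping-time proof of the reverse H\"older inequality for (ii). The one step you flag as delicate does close as you describe, since $|\Omega_{A\alpha}\cap Q_k^{\alpha}|\leqslant (A\alpha)^{-1}w(Q_k^{\alpha})\leqslant 2^nA^{-1}|Q_k^{\alpha}|$ by the upper bound on the stopping averages, so taking $A>2^n/\eta$ lets (i) be applied uniformly on each $Q_k^{\alpha}$.
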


Next, we would like to briefly discuss the bilinear Muckenhoupt condition, $A_{\vec{P},q}$ condition, which was introduced by the second author in \cite{K. Moen multilinear}. A set of weights $(w_1,...,w_m)$ is said to be in the class $A_{\vec{P},q}$ if
$$
\sup_Q \left(\dashint_Q (w_1w_2)^q\right)^{\frac{1}{q}}\left(\dashint_Q{w_1^{-p_1'}}\right)^{\frac{1}{p_1'}}\left(\dashint_Q{w_2^{-p_2'}}\right)^{\frac{1}{p_2'}} <\infty.
$$
The second author  also proved that if $p_i\leqslant q_i$ and $\frac{1}{q}=\frac{1}{q_1}+\frac{1}{q_2}$, then
$$
\bigcup_{q_1,q_2}(A_{p_1,q_1}\times A_{p_2,q_2})\subsetneq A_{\vec{P},q}
$$
where the inclusion was shown to be strict.  \begin{theorem} \label{thm5}
Suppose $1<p_1,p_2<\infty$, and $(w_1,w_2)\in A_{\vec{P},q}$, then we have
$$
(w_1w_2)^q\in A_{2q} \qquad \text{and} \qquad w_i^{-p_i'}\in A_{2p_i'}.
$$
\end{theorem}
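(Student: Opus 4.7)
The plan is to extract both containments directly from the $A_{\vec P,q}$ quantity via a single mechanism: H\"older's inequality on the probability average $\dashint_Q$, with a dummy factor of $1$ absorbing slack whenever the reciprocal exponents sum to at most one. This reduces each assertion to a one-line computation.

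For $(w_1w_2)^q\in A_{2q}$: with $\theta=q/(2q-1)$, the $A_{2q}$ condition demands boundedness of
$$\left(\dashint_Q(w_1w_2)^q\right)\left(\dashint_Q w_1^{-\theta}w_2^{-\theta}\right)^{2q-1}.$$
I would apply H\"older to the second average with exponents $p_1'/\theta$, $p_2'/\theta$, and a dummy; admissibility reduces to $\theta(1/p_1'+1/p_2')\le 1$, which a short calculation shows is equivalent to $p\le q$. This yields
$$\dashint_Q w_1^{-\theta}w_2^{-\theta}\le \left(\dashint_Q w_1^{-p_1'}\right)^{\theta/p_1'}\left(\dashint_Q w_2^{-p_2'}\right)^{\theta/p_2'},$$
and raising to $2q-1$ (where $\theta(2q-1)=q$) matches exactly the $q$th power of the $A_{\vec P,q}$ constant.

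For $w_1^{-p_1'}\in A_{2p_1'}$ (the $i=2$ case being symmetric), write $\sigma_1=p_1'/(2p_1'-1)$; we must bound $\bigl(\dashint_Q w_1^{-p_1'}\bigr)\bigl(\dashint_Q w_1^{\sigma_1}\bigr)^{2p_1'-1}$. The key move is to reintroduce $w_2$ via the identity $w_1^{\sigma_1}=(w_1w_2)^{\sigma_1}\,w_2^{-\sigma_1}$ and then apply the same padded H\"older, now with exponents $q/\sigma_1$ and $p_2'/\sigma_1$, yielding
$$\dashint_Q w_1^{\sigma_1} \le \left(\dashint_Q (w_1w_2)^q\right)^{\sigma_1/q}\left(\dashint_Q w_2^{-p_2'}\right)^{\sigma_1/p_2'}.$$
Raising to $2p_1'-1$ (where $\sigma_1(2p_1'-1)=p_1'$) and multiplying by $\dashint_Q w_1^{-p_1'}$ produces the $p_1'$th power of the $A_{\vec P,q}$ quantity.

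The one step worth verifying in each half is the padding admissibility, which after short algebra reduces in both cases to $1/p_1'+1/p_2'\le 2-1/q$, i.e., $p\le q$; this is the natural regime in which the $A_{\vec P,q}$ class is of interest and is already built into the hypotheses of the paper's main theorems. Nothing deeper (reverse H\"older, $A_\infty$ self-improvement, extrapolation) is required.
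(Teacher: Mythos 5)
Your proof is correct. The calculations check out: with $\theta=q/(2q-1)$ the $A_{2q}$ condition for $(w_1w_2)^q$ is indeed $\bigl(\dashint_Q(w_1w_2)^q\bigr)\bigl(\dashint_Q(w_1w_2)^{-\theta}\bigr)^{2q-1}$, and the padded H\"older step with exponents $p_1'/\theta$, $p_2'/\theta$ (plus a slack slot) is admissible precisely when $\theta\bigl(\frac{1}{p_1'}+\frac{1}{p_2'}\bigr)=\theta\bigl(2-\frac1p\bigr)\le1$, which is $p\le q$; raising to the power $2q-1$ then reproduces the $q$th power of the $A_{\vec P,q}$ constant exactly. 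The second half is the nicer observation: factoring $w_1^{\sigma_1}=(w_1w_2)^{\sigma_1}w_2^{-\sigma_1}$ with $\sigma_1=p_1'/(2p_1'-1)$ reintroduces the missing weight, the padding condition $\sigma_1(\frac1q+\frac1{p_2'})\le1$ again reduces to $p\le q$, and after raising to $2p_1'-1$ you recover the $p_1'$th power of the $A_{\vec P,q}$ constant. Be aware that the paper itself states Theorem \ref{thm5} without proof, deferring to \cite{K. Moen multilinear}; your argument is essentially the standard one given there (padded H\"older on the averages), so this is a faithful reconstruction rather than a genuinely new route. The only caveat worth making explicit in a write-up is that the hypothesis $p\le q$ must be stated (or inherited from the defining Sobolev relation for $A_{\vec P,q}$), since the theorem as reproduced in this paper omits it; you flagged this correctly, and without it the padding fails because the reciprocal exponents would exceed one.
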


Cruz-Uribe, Martell and P\'erez \cite{D. Cruz-Uribe and J. M. Martell and C. Perez} proved an extrapolation theorem for $A_\infty$ weights. Namely,

\begin{theorem} \label{thm6}
Suppose there exist $p_0\in (0,\infty)$ such that
$$
\int_{\mathbb{R}^n}|f|^{p_0}w\leqslant C \int_{\mathbb{R}^n}|g|^{p_0}w \qquad \forall w\in A_\infty
$$
then we have
$$
\int_{\mathbb{R}^n}|f|^pw\leqslant C \int_{\mathbb{R}^n}|g|^pw \qquad \forall w\in A_\infty, \forall p\in(0,\infty).
$$
\end{theorem}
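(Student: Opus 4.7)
The plan is to use a Rubio de Francia iteration algorithm adapted to the $A_\infty$ setting, exploiting the fact that the hypothesis holds for \emph{every} $A_\infty$ weight and therefore grants us the freedom to change weights at will. The argument splits naturally into the cases $p > p_0$ and $p < p_0$.

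For $p > p_0$, fix $w \in A_\infty$. Since $p/p_0 > 1$, duality in $L^{p/p_0}(w)$ gives
\[
\Bigl(\int_{\R^n}|f|^p\thinspace w\, dx\Bigr)^{p_0/p} = \sup_h \int_{\R^n} |f|^{p_0}\thinspace h\thinspace w\, dx,
\]
where $h$ ranges over nonnegative functions with $\|h\|_{L^{(p/p_0)'}(w)} \le 1$. The crucial step is to replace $h$ by a pointwise larger majorant $H$ satisfying $\|H\|_{L^{(p/p_0)'}(w)} \le 2$ and, critically, $Hw \in A_\infty$ with constant independent of $h$. This is constructed via the iteration
\[
H = \sum_{k=0}^\infty \frac{(M^c_w)^k h}{(2\|M^c_w\|)^k},
\]
where $M^c_w$ is the centered weighted maximal operator, bounded on $L^s(w)$ for every $s>1$ with norm independent of $w$. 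Once $Hw \in A_\infty$ is verified, applying the hypothesis at $p_0$ to this weight yields
\[
\int_{\R^n} |f|^{p_0}\thinspace Hw\, dx \le C \int_{\R^n} |g|^{p_0}\thinspace Hw\, dx,
\]
and H\"older's inequality in $L^{p/p_0}(w)$ combined with the control on $\|H\|_{L^{(p/p_0)'}(w)}$ dualizes the right-hand side back to $\|g\|_{L^p(w)}^{p_0}$, completing this case.

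For $p < p_0$, a symmetric duality argument in $L^{p_0/p}(w)$ reduces matters to constructing a Rubio de Francia majorant for test functions in $L^{(p_0/p)'}(w)$, and the same iteration scheme applies after swapping the roles of $f$ and $g$ in the dual pairing.

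The principal technical obstacle is establishing that the product $Hw$ lies in $A_\infty$ with constants independent of $h$. By construction, the iterate satisfies the weighted $A_1$-type estimate $M^c_w H \le 2\|M^c_w\| \cdot H$; combining this with the reverse H\"older inequality for $w$ from Lemma \ref{thm10}(ii), one obtains that $Hw$ satisfies a uniform $A_\infty$ condition. This closure property of $A_\infty$ under multiplication by weighted $A_1$ weights is the technical heart of the argument, and it is what permits the extrapolation to run unobstructed across the full range $p \in (0,\infty)$ despite the absence of the Muckenhoupt structure that undergirds the classical $A_p$ extrapolation.
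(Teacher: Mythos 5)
The paper itself does not prove this result; it is quoted verbatim from Cruz--Uribe, Martell and P\'erez and cited as such, so there is no in-paper proof against which to match your argument. Evaluating your sketch on its own merits, the case $p>p_0$ is essentially the standard Rubio de Francia argument for $A_\infty$ extrapolation and is correct: duality in $L^{p/p_0}(w)$, construction of the majorant $H$ via iteration of $M_w^c$, the uniform $A_1(w)$ estimate $M_w^c H\leqslant 2\|M_w^c\|H$, the fact that $Hw\in A_\infty$, and the final H\"older step all go through. (One small simplification: you do not actually need reverse H\"older to see that $Hw\in A_\infty$. If $w\in A_q$, then directly from the $A_1(w)$ condition $\frac{1}{w(Q)}\int_Q Hw\leqslant K\inf_Q H$ and from $H^{-1/(q-1)}\leqslant(\inf_Q H)^{-1/(q-1)}$ one gets $[Hw]_{A_q}\leqslant K[w]_{A_q}$, with the $\inf_Q H$ factors cancelling; no RH needed.)

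The genuine gap is the case $p<p_0$. You assert that ``a symmetric duality argument in $L^{p_0/p}(w)$'' works ``after swapping the roles of $f$ and $g$,'' but the two directions are not symmetric, and the swap does not produce a valid argument. If you dualize $\bigl(\int|f|^{p_0}w\bigr)^{p/p_0}=\sup_h\int|f|^p h\,w$ with $\|h\|_{L^{(p_0/p)'}(w)}\leqslant1$, the quantity $\int|f|^p h\,w$ is what you are trying to \emph{bound}, not the hypothesis, so duality only re-expresses the hypothesis, not the conclusion. The correct downward argument (which is what Cruz--Uribe--Martell--P\'erez actually carry out) is structurally different: one applies a Rubio de Francia--type construction to $g$ \emph{itself}, not to a dual test function. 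Writing $\int|f|^p w\leqslant\bigl(\int|f|^{p_0}u\,w\bigr)^{p/p_0}\bigl(\int u^{-p/(p_0-p)}w\bigr)^{1-p/p_0}$ by H\"older and trying to apply the hypothesis to $uw$, one is forced to take $u\approx(\mathcal Rg)^{-(p_0-p)}$ with $\mathcal Rg\geqslant|g|$, so that (a) $(\mathcal Rg)^{-(p_0-p)}w\in A_\infty$, and (b) $\|\mathcal Rg\|_{L^p(w)}\lesssim\|g\|_{L^p(w)}$. Neither of these follows from ``the same iteration scheme.'' For (a), $\mathcal Rg\in A_1(w)$ gives control of $Vw$ for $V=\mathcal Rg$, but here one needs $V^{-\mu}w\in A_\infty$ for $\mu=p_0-p>0$, a negative power, and the $\inf_Q V$ cancellation that worked in the upward case goes the wrong way. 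For (b), when $p\leqslant1$ the iteration $\sum_k(M_w^c)^k/(2\|M_w^c\|)^k$ cannot be run in $L^p(w)$ because $M_w^c$ is not bounded there. Both obstructions are real and require the additional machinery in the CMP proof; as written, your sketch does not close this case.
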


Finally, we will need the concept of bounded mean oscillation. Let $BMO$ denote the space of functions of bounded mean oscillation, i.e., functions $b$ such that
$$
\|b\|_{BMO}=\sup_Q\dashint_Q|b(x)-b_Q| \thinspace dx < \infty
$$
where $b_Q=\dashint_Qb(x) \thinspace dx$.

$BMO$ functions satisfy the exponential integrability which is a consequence of the John-Nirenberg theorem.

\begin{theorem} \label{thm7}
Given $b\in BMO$, there exists a constant $c_n$ such that for every cube Q,
$$
\sup_Q \dashint_Q\exp \left(\frac{|b(x)-b_Q|}{2^{n+2}\|b\|_{BMO}}\right)dx \leqslant c_n.
$$
In particular,
$$
\|b-b_Q\|_{\exp L,Q}\leqslant c_n 2^{n+2} \|b\|_{BMO}.
$$
\end{theorem}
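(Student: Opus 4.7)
The plan is to follow the classical route: first establish the John--Nirenberg distributional inequality
$$|\{x \in Q : |b(x) - b_Q| > t\}| \leqslant c_1 \exp\!\Bigl(-\tfrac{c_2\, t}{\|b\|_{BMO}}\Bigr)\, |Q|, \qquad t > 0,$$
for every cube $Q$ with dimensional constants $c_1, c_2 > 0$, and then integrate against the exponential to obtain the Orlicz bound.

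To produce the distributional estimate, I would fix $Q_0$ and run a Calder\'on--Zygmund stopping-time decomposition on the function $|b - b_{Q_0}|$ at the level $\lambda = 2^n\|b\|_{BMO}$. Since $\avgint_{Q_0}|b - b_{Q_0}|\leqslant \|b\|_{BMO} < \lambda$, the selection produces a disjoint family of maximal dyadic subcubes $\{Q_j^{(1)}\}$ of $Q_0$ satisfying
$$\lambda < \avgint_{Q_j^{(1)}}|b - b_{Q_0}| \leqslant 2^n\lambda, \qquad |b - b_{Q_0}|\leqslant \lambda \ \text{a.e. on}\ Q_0\setminus \bigcup_j Q_j^{(1)},$$
together with $\sum_j |Q_j^{(1)}|\leqslant \lambda^{-1}\int_{Q_0}|b-b_{Q_0}|\leqslant 2^{-n}|Q_0|$, and the crucial comparison $|b_{Q_j^{(1)}} - b_{Q_0}|\leqslant 2^n\lambda$. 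I would then iterate: apply the identical stopping-time to $|b - b_{Q_j^{(1)}}|$ on each $Q_j^{(1)}$. After $N$ iterations one produces cubes $\{Q_\ell^{(N)}\}$ with $\sum_\ell |Q_\ell^{(N)}|\leqslant 2^{-nN}|Q_0|$ and $|b(x) - b_{Q_0}|\leqslant N\cdot 2^n\lambda$ for a.e.\ $x$ outside their union, by telescoping the comparisons of successive averages along the nested chain. Given $t > 0$, selecting the smallest $N$ with $N \cdot 2^{2n}\|b\|_{BMO} \geqslant t$ yields the claimed exponential decay with $c_2$ of order $2^{-2n}\log 2$.

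Once the distributional inequality is in hand, the layer-cake formula for $0 < \alpha < c_2/\|b\|_{BMO}$ gives
$$\avgint_Q e^{\alpha|b - b_Q|}\,dx = 1 + \alpha\int_0^\infty e^{\alpha t}\,\frac{|\{x\in Q : |b - b_Q| > t\}|}{|Q|}\,dt \leqslant 1 + \frac{c_1\alpha}{c_2/\|b\|_{BMO} - \alpha}.$$
Specialising $\alpha = 1/(2^{n+2}\|b\|_{BMO})$ renders the right-hand side bounded by a dimensional constant $c_n$, which is precisely the first conclusion of the theorem. The second conclusion, $\|b - b_Q\|_{\exp L, Q}\leqslant c_n\, 2^{n+2}\|b\|_{BMO}$, follows by unwinding the Luxemburg definition applied to the Young function $\Phi(t) = e^t - 1$.

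The main technical obstacle is not conceptual but arithmetic: the dimensional factor $2^{2n}$ that naturally appears in the exponent from the stopping-time must be dominated by the denominator $2^{n+2}$ recorded in the statement. I anticipate this forces a modest refinement of the iteration (e.g.\ taking $\lambda$ slightly larger at the cost of accepting a slower geometric decay of $\sum|Q_\ell^{(N)}|$) in order to push the effective $c_2$ above $1/2^{n+2}$; alternatively one may allow $c_n$ to absorb additional dimensional factors. Beyond this bookkeeping, the only remaining step is the elementary exponential integral, and the assertion reduces to the conjunction of the two standard displays above.
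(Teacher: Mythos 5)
The paper itself offers no proof of Theorem \ref{thm7}; it simply refers to \cite{J. L. Journe}. Your Calder\'on--Zygmund iteration is the classical route and the overall strategy is sound, but the constant tracking as written does not deliver the stated denominator $2^{n+2}$, and both of your proposed repairs are incorrect.

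With the stopping level $\lambda = 2^n\|b\|_{BMO}$, the iteration gives, for $t>0$,
$$
\bigl|\{x\in Q_0 : |b(x)-b_{Q_0}|>t\}\bigr| \;\leqslant\; 2^n \exp\!\Bigl(-\,\frac{n\log 2}{2^{2n}}\,\frac{t}{\|b\|_{BMO}}\Bigr)\,|Q_0|,
$$
so $c_2 = n\log 2/2^{2n}$. Convergence of the layer-cake integral at $\alpha = 1/(2^{n+2}\|b\|_{BMO})$ requires $\alpha < c_2/\|b\|_{BMO}$, i.e.\ $2^{n-2} < n\log 2$, which fails already at $n=4$ and for all larger $n$. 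When it fails, $\int_0^\infty e^{\alpha t}\,|\{|b-b_{Q_0}|>t\}|\,dt$ is genuinely infinite, so your fallback of ``letting $c_n$ absorb additional dimensional factors'' is not available: no finite constant repairs a divergent integral. Your other suggestion, ``take $\lambda$ slightly larger,'' goes the wrong way. Writing $s=\lambda/\|b\|_{BMO}>1$, the telescoped step size is $2^n s\,\|b\|_{BMO}$ and the per-level measure decay is $s^{-1}$, so $c_2$ is proportional to $\log s/(2^n s)$; this is maximized at $s=e$ and decreases for $s>e$, whereas your choice $s=2^n$ is already far past the optimum, and increasing $s$ only worsens the exponent (and, contrary to your phrasing, makes the geometric decay \emph{faster}, not slower). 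The correct repair is to take $\lambda$ equal to a \emph{small absolute} multiple of $\|b\|_{BMO}$, e.g.\ $\lambda = 2\|b\|_{BMO}$: then $\sum_j|Q_j^{(1)}|\leqslant\frac12|Q_0|$, the telescoping bound becomes $N\cdot 2^{n+1}\|b\|_{BMO}$, and one obtains $c_2=\log 2/2^{n+1}$. Since $\log 2>1/2$, this exceeds $1/2^{n+2}$, the integral converges, and the dimensional constant $c_n$ emerges exactly as in your final display. With that single change of stopping level the rest of your argument is correct.
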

A proof of Theorem \ref{thm7} can be found in \cite{J. L. Journe}.

\begin{cor} \label{thm8}
If $b\in BMO$, then for any $\xi>0$,
$$
\bigl\||b-b_Q|^\xi\bigr\|^\frac{1}{\xi}_{\exp(L^\frac{1}{\xi}),Q}\lesssim c_n 2^{n+2} \|b\|_{BMO}
$$
where $\exp(L^\frac{1}{\xi})$ stands for the Young function $\psi(t)\approx \exp(t^\frac{1}{\xi})-1$.
\end{cor}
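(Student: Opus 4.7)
The plan is to reduce the statement directly to Theorem \ref{thm7} by a scaling substitution inside the Luxemburg definition of the Orlicz average. The whole corollary is essentially a change of variables: replacing $b-b_Q$ by $|b-b_Q|^\xi$ and the Young function $e^t-1$ by $\exp(t^{1/\xi})-1$ should exactly cancel.

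First I would unwind the defining infimum. Writing $\psi(t)\approx \exp(t^{1/\xi})-1$, the Luxemburg norm is
\begin{equation*}
\bigl\||b-b_Q|^\xi\bigr\|_{\psi,Q}=\inf\left\{\lambda>0:\;\dashint_Q\psi\!\left(\frac{|b(x)-b_Q|^\xi}{\lambda}\right)dx\leqslant 1\right\}.
\end{equation*}
For any admissible $\lambda$, the integrand is (up to the fixed equivalence constants absorbed in $\psi$) equal to $\exp\!\bigl(|b(x)-b_Q|/\lambda^{1/\xi}\bigr)-1$. I would then substitute $\mu=\lambda^{1/\xi}$: the admissibility condition for $\lambda$ translates verbatim into the admissibility condition for $\mu$ in the definition of $\|b-b_Q\|_{\exp L,Q}$. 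Taking the infimum on both sides yields
\begin{equation*}
\bigl\||b-b_Q|^\xi\bigr\|_{\psi,Q}\;\approx\;\|b-b_Q\|_{\exp L,Q}^{\,\xi}.
\end{equation*}
Raising both sides to the power $1/\xi$ and applying Theorem \ref{thm7} gives the bound $\bigl\||b-b_Q|^\xi\bigr\|_{\psi,Q}^{1/\xi}\lesssim c_n\,2^{n+2}\|b\|_{BMO}$.

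The only subtlety is to keep track of the equivalence constants in $\psi(t)\approx \exp(t^{1/\xi})-1$: one can fix a pair of genuine Young functions sandwiching $\psi$ and argue the rescaling for each, so that the multiplicative constants are harmlessly absorbed into the $\lesssim$. Since everything else is literally a change of variables, I do not expect any genuine obstacle beyond this bookkeeping.
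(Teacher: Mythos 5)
Your proposal is correct and follows exactly the same route as the paper's proof: the substitution $\mu=\lambda^{1/\xi}$ inside the Luxemburg infimum, the resulting identity $\bigl\||b-b_Q|^\xi\bigr\|_{\exp(L^{1/\xi}),Q}=\|b-b_Q\|_{\exp L,Q}^{\xi}$, and then an appeal to Theorem \ref{thm7}. The only cosmetic difference is that you flag the bookkeeping for the $\approx$ in the definition of $\psi$, which the paper glosses over.
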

\begin{proof}
By definition, we have
\begin{equation*}
\begin{split}
    \bigl\||b-b_Q|^\xi\bigr\|_{\exp(L^\frac{1}{\xi}),Q}
    & = \inf \left\{\lambda>0: \hspace{1mm}\frac{1}{|Q|}\int_Q\left[\exp\left(\frac{|b(x)-b_Q|}{\lambda^\frac{1}{\xi}}\right)-1\right] dx \leqslant 1\right\} \\
    & = \inf \left\{\lambda^{\xi}>0: \hspace{1mm}\frac{1}{|Q|}\int_Q\left[\exp\left(\frac{|b(x)-b_Q|}{\lambda}\right)-1\right] dx \leqslant 1\right\} \\
    & = \bigl\|b-b_Q\bigr\|_{\exp L,Q}^\xi
\end{split}
\end{equation*}
which implies the desired estimate.
\end{proof}

Through out this paper, we will make extensive use of the following proposition, which is actually a discrete H\"older inequality.

\begin{prop} \label{thm9}
Suppose $p_1,p_2>1$, $p_3>0$, and  $\frac{1}{p}=\frac{1}{p_1}+\frac{1}{p_2}<1\leqslant\frac{1}{p_1}+\frac{1}{p_2}+\frac{1}{p_3}$. We have the following inequality for non-negative sequences $\{a_j\}$, $\{b_j\}$, and $\{c_j\}$
$$
\sum_j a_jb_jc_j\leqslant \left(\sum_ja_j^{p_1}\right)^{\frac{1}{p_1}} \left(\sum_jb_j^{p_2}\right)^{\frac{1}{p_2}} \left(\sum_jc_j^{p_3}\right)^{\frac{1}{p_3}}.
$$
\end{prop}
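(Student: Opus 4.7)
The plan is to reduce this to the classical three-exponent H\"older inequality by introducing the dual exponent $p' = p/(p-1)$, and then to convert the resulting $\ell^{p'}$-norm of $\{c_j\}$ into an $\ell^{p_3}$-norm using the monotonicity of discrete $\ell^r$-norms. Since $\frac{1}{p_1}+\frac{1}{p_2} = \frac{1}{p} < 1$ by hypothesis, the exponent $p' \in (1,\infty)$ is well-defined and satisfies $\frac{1}{p_1}+\frac{1}{p_2}+\frac{1}{p'} = 1$. Standard H\"older then gives
$$\sum_j a_j b_j c_j \leq \left(\sum_j a_j^{p_1}\right)^{\frac{1}{p_1}} \left(\sum_j b_j^{p_2}\right)^{\frac{1}{p_2}} \left(\sum_j c_j^{p'}\right)^{\frac{1}{p'}}.$$

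Next, the hypothesis $\frac{1}{p_1}+\frac{1}{p_2}+\frac{1}{p_3} \geq 1$ rearranges precisely to $\frac{1}{p_3} \geq \frac{1}{p'}$, i.e.\ $p_3 \leq p'$. For any non-negative sequence $(c_j)$ and any $0 < p_3 \leq p'$ one has
$$\left(\sum_j c_j^{p'}\right)^{\frac{1}{p'}} \leq \left(\sum_j c_j^{p_3}\right)^{\frac{1}{p_3}}.$$
This is the usual nesting of the discrete $\ell^r$-spaces; to see it by hand, homogeneity reduces to the case $\sum_j c_j^{p_3} = 1$, in which each $c_j \leq 1$, so $p' \geq p_3$ forces $c_j^{p'} \leq c_j^{p_3}$ termwise, and summing gives $\sum_j c_j^{p'} \leq 1$. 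Multiplying this bound into the H\"older estimate above yields the desired inequality.

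The argument is essentially soft and presents no real obstacle; the only point that requires care is the direction of the $\ell^r$-nesting for sequences (as opposed to the opposite direction that holds on probability spaces). The role of the hypothesis $\frac{1}{p_1}+\frac{1}{p_2}+\frac{1}{p_3} \geq 1$, rather than equality, is exactly to guarantee $p_3 \leq p'$ so that the embedding goes in the useful direction.
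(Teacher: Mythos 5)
Your proof is correct. The paper states this proposition without a proof — calling it merely ``a discrete H\"older inequality'' — so there is nothing to compare against; your argument (three-term H\"older with the dual exponent $p'$, then the nesting $\ell^{p_3}\hookrightarrow\ell^{p'}$ for $p_3\leqslant p'$) is exactly the standard and intended way to supply the missing details, and your aside about the direction of the $\ell^r$-nesting and the role of the inequality $\tfrac{1}{p_1}+\tfrac{1}{p_2}+\tfrac{1}{p_3}\geqslant 1$ is the one point worth spelling out.
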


\hfill

\section{Proof of Theorem \ref{thmA}}

Without loss of generality, we may assume that $f$ and $g$ are non-negative, bounded and compactly supported. By induction, we can prove that

\begin{equation} \label{eq41}
\begin{split}
    [\vec{b}, & \mathsf{BI}_\alpha]_{\vec{\beta}}(f,g)(x) \\
    = & \int_{\mathbb{R}^n} \prod_{i=1}^{m}\bigl(b_i(x)-b_i(x-y)\bigr) \prod_{i=m+1}^{N}\bigl(b_i(x)-b_i(x+y)\bigr) \frac{f(x-y)g(x+y)}{|y|^{n-\alpha}} dy.
\end{split}
\end{equation}
For each $Q\in \mathscr{D}$, let $\lambda_i=\lambda_i(Q)=\dashint_{3Q}b_i(x)dx$ where $i=1,...,N$, we have

\begin{equation*}
\begin{split}
    \prod_{i=1}^{m}\bigl(b_i(x)-b_i(x-y)\bigr) & = \prod_{i=1}^{m}\Bigl[\bigl(b_i(x)-\lambda_i\bigr)+\bigl(\lambda_i-b_i(x-y)\bigr)\Bigr] \\
    & = \sum_{A\subseteq\{1,...,m\}}\prod_{i\in A}\bigl(b_i(x)-\lambda_i\bigr) \prod_{i\in \bar{A}}\bigl(\lambda_i-b_i(x-y)\bigr)
\end{split}
\end{equation*}
and similarly,
$$
\prod_{i=m+1}^{N}\bigl(b_i(x)-b_i(x+y)\bigr)=\sum_{B\subseteq\{m+1,...,N\}}\prod_{i\in B}\bigl(b_i(x)-\lambda_i\bigr) \prod_{i\in \bar{B}}\bigl(\lambda_i-b_i(x+y)\bigr).
$$
Hence
\begin{equation*}
\begin{split}
    & \prod_{i=1}^{m}\bigl(b_i(x)-b_i(x-y)\bigr) \prod_{i=m+1}^{N}\bigl(b_i(x)-b_i(x+y)\bigr) = \\
    & \sum_{A\subseteq\{1,...,m\}}\sum_{B\subseteq\{m+1,...,N\}} \prod_{i\in A\cup B}\bigl(b_i(x)-\lambda_i\bigr) \prod_{i\in \bar{A}}\bigl(\lambda_i-b_i(x-y)\bigr) \prod_{i\in \bar{B}}\bigl(\lambda_i-b_i(x+y)\bigr).
\end{split}
\end{equation*}
This estimate together with \eqref{eq41} yield

\begin{equation} \label{eq42}
\begin{split}
    & \bigl|[\vec{b}, \mathsf{BI}_\alpha]_{\vec{\beta}}(f,g)(x)\bigr| \\
    & \leqslant \sum_{A\subseteq\{1,...,m\}}\sum_{B\subseteq\{m+1,...,N\}} \int_{\mathbb{R}^n} \prod_{i\in A\cup B}|b_i(x)-\lambda_i| \prod_{i\in \bar{A}}|b_i(x-y)-\lambda_i| \\
    & \hspace{6.7cm} \prod_{i\in \bar{B}}|b_i(x+y)-\lambda_i| \frac{f(x-y)g(x+y)}{|y|^{n-\alpha}} dy \\
    & \lesssim \sum_{A\subseteq\{1,...,m\}}\sum_{B\subseteq\{m+1,...,N\}} \sum_{Q\in\mathscr{D}} |Q|^{\frac{\alpha}{n}-1}\int_{|y|_\infty\leqslant\ell(Q)} \prod_{i\in A\cup B}|b_i(x)-\lambda_i| \\
    & \hspace{2.7cm} \prod_{i\in \bar{A}}|b_i(x-y)-\lambda_i| \prod_{i\in \bar{B}}|b_i(x+y)-\lambda_i| f(x-y)g(x+y)\thinspace dy \hspace{1mm} \chi_Q(x). \\
\end{split}
\end{equation}

\noindent Since $q\leqslant1$, we have

\begin{equation*}
\begin{split}
    & \int_{\mathbb{R}^n}\bigl|[\vec{b}, \mathsf{BI}_\alpha]_{\vec{\beta}}(f,g)(x)\bigr|^q u(x) \thinspace dx \\
    & \lesssim \sum_{A\subseteq\{1,...,m\}}\sum_{B\subseteq\{m+1,...,N\}} \sum_{Q\in\mathscr{D}} |Q|^{\left(\frac{\alpha}{n}-1\right)q} \\
    & \hspace{1cm} \int_Q\left[\int_{|y|_\infty\leqslant\ell(Q)} \prod_{i\in \bar{A}}|b_i(x-y)-\lambda_i| \prod_{i\in \bar{B}}|b_i(x+y)-\lambda_i| f(x-y)g(x+y)\thinspace dy\right]^q \\
    & \hspace{8cm} \left[\prod_{i\in A\cup B}|b_i(x)-\lambda_i|\right]^q u(x) \thinspace dx.
\end{split}
\end{equation*}

\noindent If we use H\"older inequality with the pair $\left(\frac{1}{q},\frac{1}{1-q}\right)$, we will arrive at the inequality

\begin{equation*}
\begin{split}
    & \int_{\mathbb{R}^n}\bigl|[\vec{b}, \mathsf{BI}_\alpha]_{\vec{\beta}}(f,g)(x)\bigr|^q u(x) \thinspace dx \\
    & \lesssim \sum_{A\subseteq\{1,...,m\}}\sum_{B\subseteq\{m+1,...,N\}} \sum_{Q\in\mathscr{D}} |Q|^{\left(\frac{\alpha}{n}-1\right)q} \\
    & \hspace{0.7cm} \left[\int_Q\int_{|y|_\infty\leqslant\ell(Q)} \prod_{i\in \bar{A}}|b_i(x-y)-\lambda_i| \prod_{i\in \bar{B}}|b_i(x+y)-\lambda_i| f(x-y)g(x+y)\thinspace dydx\right]^q \\
    & \hspace{6.2cm} \left[\int_Q \prod_{i\in A\cup B}|b_i(x)-\lambda_i|^\frac{q}{1-q} u(x)^\frac{1}{1-q} \thinspace dx\right]^{1-q}.
\end{split}
\end{equation*}

\noindent By a change of variables, we have

\begin{equation} \label{eq43}
\begin{split}
    & \int_{\mathbb{R}^n}\bigl|[\vec{b}, \mathsf{BI}_\alpha]_{\vec{\beta}}(f,g)(x)\bigr|^q u(x) \thinspace dx \\
    & \lesssim \sum_{A\subseteq\{1,...,m\}}\sum_{B\subseteq\{m+1,...,N\}} \sum_{Q\in\mathscr{D}} |Q|^{\frac{\alpha}{n}q+1} \\
    & \hspace{4.1cm} \left[\dashint_{3Q} \prod_{i\in \bar{A}}|b_i(t)-\lambda_i|f(t)\thinspace dt \hspace{1mm} \dashint_{3Q} \prod_{i\in \bar{B}}|b_i(z)-\lambda_i|g(z)\thinspace dz\right]^q \\
    & \hspace{6cm} \left[\dashint_{3Q} \prod_{i\in A\cup B}|b_i(x)-\lambda_i|^\frac{q}{1-q} u(x)^\frac{1}{1-q} \thinspace dx\right]^{1-q}.
\end{split}
\end{equation}
Now we use the generalized H\"older inequality, Theorem \ref{thm7} and Corollary \ref{thm8} to obtain the following estimates:
\begin{equation*}
\begin{split}
    \dashint_{3Q} \prod_{i\in \bar{A}}|b_i(t)-\lambda_i|f(t)\thinspace dt
    & \lesssim \prod_{i\in \bar{A}} \|b_i-\lambda_i\|_{\exp L,3Q} \hspace{1mm}\|f\|_{L(\log L)^{|\bar{A}|},3Q} \\
    & \lesssim \prod_{i\in \bar{A}} \|b_i\|_{BMO} \hspace{1mm} \|f\|_{L(\log L)^{|\bar{A}|},3Q}
\end{split}
\end{equation*}
and
\begin{equation*}
\begin{split}
    \dashint_{3Q} \prod_{i\in \bar{B}}|b_i(z)-\lambda_i|g(z)\thinspace dz
    & \lesssim \prod_{i\in \bar{B}} \|b_i-\lambda_i\|_{\exp L,3Q} \hspace{1mm}\|g\|_{L(\log L)^{|\bar{B}|},3Q} \\
    & \lesssim \prod_{i\in \bar{B}} \|b_i\|_{BMO} \hspace{1mm} \|g\|_{L(\log L)^{|\bar{B}|},3Q}.
\end{split}
\end{equation*}
Then we have

\begin{align*}
   \lefteqn{\dashint_{3Q} \prod_{i\in A\cup B}|b_i(x)-\lambda_i|^\frac{q}{1-q}  u(x)^\frac{1}{1-q} \thinspace dx} \\
    &\quad \lesssim \prod_{i\in A\cup B}\bigl\||b_i-\lambda_i|^\frac{q}{1-q}\bigr\|_{\exp(L^\frac{1-q}{q}),3Q} \|u^\frac{1}{1-q}\|_{L(\log L)^\frac{q|A\cup B|}{1-q},3Q} \\
    & \quad\lesssim \prod_{i\in A\cup B} \|b_i\|_{BMO}^\frac{q}{1-q} \hspace{1mm} \|u^\frac{1}{1-q}\|_{L(\log L)^\frac{q|A\cup B|}{1-q},3Q}.
\end{align*}

Substituting these estimates into \eqref{eq43} and use the facts: $|\bar{A}|\leqslant m$, $|\bar{B}|\leqslant N-m$, $|A\cup B|\leqslant N$, and stronger Young functions provide bigger Orlicz norms, we come up with the following estimates.

\begin{align} 
\lefteqn{\int_{\mathbb{R}^n}\bigl|[\vec{b}, \mathsf{BI}_\alpha]_{\vec{\beta}}(f,g)(x)\bigr|^q u(x) \thinspace dx\lesssim \|\vec{b}\|^q \sum_{A\subseteq\{1,...,m\}}\sum_{B\subseteq\{m+1,...,N\}} \sum_{Q\in\mathscr{D}} |Q|^{\frac{\alpha}{n}q+1}} \nonumber \\
    &\hspace{3cm} \times\Bigl(\|f\|_{L(\log L)^m,3Q} \thinspace \|g\|_{L(\log L)^{N-m},3Q}\Bigr)^q \|u^\frac{1}{1-q}\|_{L(\log L)^\frac{qN}{1-q},3Q}^{1-q} \nonumber\\
    & \lesssim \|\vec{b}\|^q \sum_{A}\sum_{B} \sum_{t=1}^{3^n}\sum_{Q\in\mathscr{D}^t} |Q|^{\frac{\alpha}{n}q+1} \Bigl(\|f\|_{L(\log L)^m,Q} \thinspace \|g\|_{L(\log L)^{N-m},Q}\Bigr)^q\|u^\frac{1}{1-q}\|_{L(\log L)^\frac{qN}{1-q},Q}^{1-q}.\nonumber \\ \label{eq44}
\end{align}

The last inequality in \eqref{eq44} comes from the fact that each $3Q$ is contained in a $Q_t\in\mathscr{D}^t$, $t\in\{1,2,...,3^n\}$, with the property $\ell(3Q)\leqslant\ell(Q_t)\leqslant6\thinspace\ell(3Q)$. We note here that each $Q_t$ like that may contain more than one but at most $6^n$ such $3Q$'s where the $Q$'s are from a same layer of $\mathscr{D}$, and there are at most 3 possible layers. From here, it suffices to estimate inner most sum of the last expression in \eqref{eq44} for a generic dyadic grid $\mathscr{D}$. We will denote this sum as 
$$\mathsf{S}=\sum_{Q\in\mathscr{D}} |Q|^{\frac{\alpha}{n}q+1} \Bigl(\|f\|_{L(\log L)^m,Q} \thinspace \|g\|_{L(\log L)^{N-m},Q}\Bigr)^q  \|u^\frac{1}{1-q}\|_{L(\log L)^\frac{qN}{1-q},Q}^{1-q}.$$
 For simplicity, we will write $\mathscr{D}$ to mean any of the $\mathscr{D}^t$'s, $t=1,...,3^n$.

Next, we will replace the sum over dyadic cubes with the sum over a spare family of Calder\'on-Zygmund cubes. More precisely, for each $k\in\mathbb{Z}$, let $\{Q_j^k\}_j$ be a collection of disjoint dyadic cubes that are maximal with respect to

$$
\|f\|_{L(\log L)^m,Q_j^k} \thinspace \|g\|_{L(\log L)^{N-m},Q_j^k} > a^k,
$$
where $a>1$ will be chosen later.  This is possible because $\|f\|_{L(\log L)^m,Q}$ and $\|g\|_{L(\log L)^{N-m},Q}$ all tend to 0 as $\ell(Q)$ tends to $\infty$. Let $\Omega_k=\bigcupdot_jQ_j^k$ and $E_j^k=Q_j^k\setminus\Omega_{k+1}$, so that the family $\{E_j^k\}_{j,k}$ is pairwise disjoint and $\bigl|Q_j^k\bigr|\leqslant2\bigl|E_j^k\bigr|$. In fact,

\begin{align*}
    \lefteqn{\bigl|Q_j^k \cap \thinspace\Omega_{k+1}\bigr| = \sum_{Q_i^{k+1}\subseteq Q_j^k}\bigl|Q_i^{k+1}\bigr|} \\
    & \leqslant \frac{1}{a^{\frac{k+1}{2}}} \sum_{i} \Bigl(\bigl|Q_i^{k+1}\bigr| \thinspace \|f\|_{L(\log L)^m,Q_i^{k+1}}\Bigr)^\frac{1}{2} \Bigl(\bigl|Q_i^{k+1}\bigr| \thinspace \|g\|_{L(\log L)^{N-m},Q_i^{k+1}}\Bigr)^\frac{1}{2} \\
    & \leqslant \frac{1}{a^{\frac{k+1}{2}}} \left(\sum_{i} \bigl|Q_i^{k+1}\bigr| \thinspace \bigl\|f\bigr\|_{L(\log L)^m,Q_i^{k+1}}\right)^\frac{1}{2} \left(\sum_{i} \bigl|Q_i^{k+1}\bigr| \thinspace \|g\|_{L(\log L)^{N-m},Q_i^{k+1}}\right)^\frac{1}{2}. 
      \end{align*}
 If $\lambda,\mu>0$ then the previous sum is bounded by
  \begin{align*}  
    & \leqslant \frac{1}{a^{\frac{k+1}{2}}} \left[\sum_{i} \bigl|Q_i^{k+1}\bigr| \left(\lambda + \frac{\lambda}{\bigl|Q_i^{k+1}\bigr|}\int_{Q_i^{k+1}}\gamma_1\left(\frac{|f|}{\lambda}\right)\right)\right]^\frac{1}{2}\\
    & \hspace{2cm} \left[\sum_{i} \bigl|Q_i^{k+1}\bigr| \left(\mu + \frac{\mu}{\bigl|Q_i^{k+1}\bigr|}\int_{Q_i^{k+1}}\gamma_2\left(\frac{|g|}{\mu}\right)\right)\right]^\frac{1}{2}\\
   & = \frac{1}{a^{\frac{k+1}{2}}} \left[\sum_{i} \lambda \int_{Q_i^{k+1}}\left(1+\gamma_1\left(\frac{|f|}{\lambda}\right)\right)\right]^\frac{1}{2} \left[\sum_{i}\mu\int_{Q_i^{k+1}}\left(1+\gamma_2\left(\frac{|g|}{\mu}\right) \right)\right]^\frac{1}{2} \\
    & \leqslant \frac{1}{a^{\frac{k+1}{2}}} \left[\lambda \int_{Q_j^k}\left(1+\gamma_1\left(\frac{|f|}{\lambda}\right)\right)\right]^\frac{1}{2} \left[\mu\int_{Q_j^k}\left(1+\gamma_2\left(\frac{|g|}{\mu}\right) \right)\right]^\frac{1}{2} \\
    & \leqslant \frac{2^n}{a^{\frac{k+1}{2}}} \bigl|Q_j^k\bigr| \left[\lambda + \frac{\lambda}{\bigl|P\bigr|} \int_{P}\gamma_1\left(\frac{|f|}{\lambda}\right)\right]^\frac{1}{2} \left[\mu + \frac{\mu}{\bigl|P\bigr|} \int_{P}\gamma_2\left(\frac{|g|}{\mu}\right)\right]^\frac{1}{2}
\end{align*}
where $\gamma_1(t)=t\log(e+t)^m$, $\gamma_2(t)=t\log(e+t)^{N-m}$, and $P$ is an immediate dyadic parent of $Q_j^k$. By taking infimum over all $\lambda>0$ and all $\mu>0$, we have
\begin{equation*}
\begin{split}
    \bigl|Q_j^k\cap \Omega_{k+1}\bigr| & \leqslant \frac{2^{n+1}}{a^{\frac{k+1}{2}}} \bigl|Q_j^k\bigr| \Bigl(\|f\|_{L(\log L)^m,P}\thinspace \|g\|_{L(\log L)^{N-m},P}\Bigr)^\frac{1}{2} \\
    & \leqslant \frac{2^{n+1}}{a^{\frac{k+1}{2}}} \bigl|Q_j^k\bigr| \thinspace a^\frac{k}{2} \\
    & = \frac{2^{n+1}}{a^{\frac{1}{2}}} \bigl|Q_j^k\bigr| 
\end{split}
\end{equation*}
where the second inequality comes from the maximality of $Q_j^k$. With an appropriate choice of $a$, we will have $\bigl|Q_j^k\cap\thinspace\Omega_{k+1}\bigr|\leqslant\frac{1}{2}\bigl|Q_j^k\bigr|$, and hence $\bigl|Q_j^k\bigr|\leqslant2\bigl|E_j^k\bigr|$ as we wish. Now, let
$$
C_k=\left\{Q\in\mathscr{D}:\thinspace a^k < \|f\|_{L(\log L)^m,Q}\thinspace \|g\|_{L(\log L)^{N-m},Q} \leqslant a^{k+1}\right\}
$$
and notice that every $Q\in \mathscr{D}$ for which the summand of $\mathsf{S}$ is non-zero must be in some $C_k$, and every $Q\in C_k$ is contained in a unique $Q_j^k$. So we have

\begin{align} 
    \mathsf{S} & \leqslant \sum_{k\in\mathbb{Z}}\sum_{Q\in C_k} |Q|^{\frac{\alpha}{n}q+1} \Bigl(\|f\|_{L(\log L)^m,Q} \thinspace \|g\|_{L(\log L)^{N-m},Q}\Bigr)^q \|u^\frac{1}{1-q}\|_{L(\log L)^\frac{qN}{1-q},Q}^{1-q} \nonumber\\
    & \leqslant \sum_{k\in\mathbb{Z}} a^{(k+1)q} \sum_{Q\in C_k} |Q|^{\frac{\alpha}{n}q+1} \thinspace \bigl\|u^\frac{1}{1-q}\bigr\|_{L(\log L)^\frac{qN}{1-q},Q}^{1-q} \nonumber\\
    & \leqslant \sum_{k\in\mathbb{Z}} a^{(k+1)q} \sum_{j\in\mathbb{Z}} \sum_{\substack{Q\in \mathscr{D} \\ Q\subseteq Q_j^k}} |Q|^{\frac{\alpha}{n}q+1} \thinspace \bigl\|u^\frac{1}{1-q}\bigr\|_{L(\log L)^\frac{qN}{1-q},Q}^{1-q}.\label{eq45}
\end{align}
For each $\lambda>0$, the most inner sum is bounded by
\begin{align*}
\lefteqn{\leqslant \sum_{\substack{Q\in \mathscr{D} \\ Q\subseteq Q_j^k}} |Q|^{\frac{\alpha}{n}q+1} \left[ \lambda + \frac{\lambda}{|Q|}\int_{Q} \gamma\left(\frac{\bigl|u^\frac{1}{1-q}\bigr|}{\lambda}\right)\right]^{1-q},} \hspace{2mm}  \\
    & = \sum_{r=0}^{\infty}\sum_{\substack{Q\in \mathscr{D}, \thinspace Q\subseteq Q_j^k \\ \ell(Q)=2^{-r}\ell(Q_j^k)}} |Q|^{\left(\frac{\alpha}{n}+1\right)q} \left[ \lambda \int_{Q} \left(1+\gamma\left(\frac{\bigl|u^\frac{1}{1-q}\bigr|}{\lambda}\right)\right) \right]^{1-q} \\
    & = \bigl|Q_j^k\bigr|^{\left(\frac{\alpha}{n}+1\right)q} \sum_{r=0}^{\infty} 2^{-qr\alpha-qrn} \sum_{\substack{Q\in \mathscr{D}, \thinspace Q\subseteq Q_j^k \\ \ell(Q)=2^{-r}\ell(Q_j^k)}} \left[\lambda \int_{Q} \left(1+\gamma\left(\frac{\bigl|u^\frac{1}{1-q}\bigr|}{\lambda}\right)\right) \right]^{1-q} \\
& \qquad \times \bigl|Q_j^k\bigr|^{\left(\frac{\alpha}{n}+1\right)q} \sum_{r=0}^{\infty} 2^{-qr\alpha-qrn} \left[\sum_{\substack{Q\in \mathscr{D}, \thinspace Q\subseteq Q_j^k \\ \ell(Q)=2^{-r}\ell(Q_j^k)}} \lambda \int_{Q} \left(1+\gamma\left(\frac{\bigl|u^\frac{1}{1-q}\bigr|}{\lambda}\right)\right) \right]^{1-q} \\
    & \hspace{9cm} \left(\sum_{\substack{Q\in \mathscr{D}, \thinspace Q\subseteq Q_j^k \\ \ell(Q)=2^{-r}\ell(Q_j^k)}} 1 \right)^q \\
    & = \bigl|Q_j^k\bigr|^{\left(\frac{\alpha}{n}+1\right)q} \left[\lambda \int_{Q_j^k} \left(1+\gamma\left(\frac{\bigl|u^\frac{1}{1-q}\bigr|}{\lambda}\right)\right) \right]^{1-q} \sum_{r=0}^{\infty} 2^{-qr\alpha} \\
    & = \frac{2^{\alpha q}}{2^{\alpha q}-1}\thinspace \bigl|Q_j^k\bigr|^{\frac{\alpha}{n}q+1} \left[\lambda + \frac{\lambda}{\bigl|Q_j^k\bigr|} \int_{Q_j^k}\gamma\left(\frac{\bigl|u^\frac{1}{1-q}\bigr|}{\lambda}\right)\right]^{1-q}, 
\end{align*}
where $\gamma(t)=t\log(e+t)^\frac{qN}{1-q}$. By taking infimum over all $\lambda>0$ and then substituting the result into \eqref{eq45}, we have

\begin{equation} \label{eq46}
\begin{split}
    \mathsf{S} & \lesssim \sum_{k\in\mathbb{Z}} a^{(k+1)q} \sum_{j\in\mathbb{Z}} \thinspace \bigl|Q_j^k\bigr|^{\frac{\alpha}{n}q+1} \thinspace \bigl\|u^\frac{1}{1-q}\bigr\|_{L(\log L)^\frac{qN}{1-q},Q_j^k}^{1-q} \\
    & \lesssim \sum_{k,j} \thinspace \bigl|Q_j^k\bigr|^{\frac{\alpha}{n}q+1} \Bigl(\|f\|_{L(\log L)^m,Q_j^k} \thinspace \|g\|_{L(\log L)^{N-m},Q_j^k}\Bigr)^q \|u^\frac{1}{1-q}\|_{L(\log L)^\frac{qN}{1-q},Q_j^k}^{1-q}.
\end{split}
\end{equation}

Now we consider the following Young functions.
\begin{equation*}
    \tau_1(t)  =\frac{t^{p_1}}{\log(e+t)^{1+(p_1-1)\delta}} \ \ \text{and} \ \   \tau_2(t) =\frac{t^{p_2}}{\log(e+t)^{1+(p_2-1)\delta}}.
\end{equation*}
Straightforward calculations show that $\tau_1\in B_{p_1}$, $\tau_2\in B_{p_2}$, and
\begin{equation*}
    \tau_1^{-1}(t) \thinspace \phi_1^{-1}(t) \approx \frac{t}{\log(e+t)^m} \ \ \text{and} \ \     \tau_2^{-1}(t) \thinspace \phi_2^{-1}(t) \approx \frac{t}{\log(e+t)^{N-m}}.
\end{equation*}

Using the generalized H\"older inequality and the imposed conditions on the weights, from \eqref{eq46} we have

\begin{equation*}
\begin{split}
    \mathsf{S} & \lesssim \sum_{k,j} \thinspace \bigl|Q_j^k\bigr|^{\frac{\alpha}{n}q+1} \Bigl(\bigl\|fv_1^\frac{1}{p_1}\bigr\|_{\tau_1,Q_j^k} \thinspace \bigl\|v_1^{-\frac{1}{p_1}}\bigr\|_{\phi_1,Q_j^k} \thinspace \bigl\|gv_2^\frac{1}{p_2}\bigr\|_{\tau_2,Q_j^k} \thinspace \bigl\|v_2^{-\frac{1}{p_2}}\bigr\|_{\phi_2,Q_j^k}\Bigr)^q \\
    & \hspace{9.2cm} \bigl\|u^\frac{1}{1-q}\bigr\|_{L(\log L)^\frac{qN}{1-q},Q_j^k}^{1-q} \\
    & \lesssim \sum_{k,j} \thinspace \bigl|Q_j^k\bigr|^{\frac{q}{p}} \Bigl(\bigl\|fv_1^\frac{1}{p_1}\bigr\|_{\tau_1,Q_j^k} \thinspace \bigl\|gv_2^\frac{1}{p_2}\bigr\|_{\tau_2,Q_j^k}\Bigr)^q. \\
\end{split}
\end{equation*}

From here, we are going to use: the condition that $p\leqslant q$, the fact that $\bigl|Q_j^k\bigr|\leqslant2\bigl|E_j^k\bigr|$, discrete H\"older inequality with the pair  $\left(\frac{p_1}{p},\frac{p_2}{p}\right)$, and theorem \ref{thm3} to obtain the following estimates.
\begin{equation} \label{eq47}
\begin{split}
    \mathsf{S} & \lesssim \left(\sum_{k,j} \thinspace \bigl|Q_j^k\bigr| \thinspace \bigl\|fv_1^\frac{1}{p_1}\bigr\|^p_{\tau_1,Q_j^k} \thinspace \bigl\|gv_2^\frac{1}{p_2}\bigr\|^p_{\tau_2,Q_j^k}\right)^{\frac{q}{p}} \\
    & \lesssim \left(\sum_{k,j} \thinspace \bigl|E_j^k\bigr|^\frac{p}{p_1} \thinspace \bigl\|fv_1^\frac{1}{p_1}\bigr\|^p_{\tau_1,Q_j^k} \thinspace \bigl|E_j^k\bigr|^\frac{p}{p_2} \thinspace \bigl\|gv_2^\frac{1}{p_2}\bigr\|^p_{\tau_2,Q_j^k}\right)^{\frac{q}{p}} \\
    & \leqslant \left(\sum_{k,j} \thinspace \bigl|E_j^k\bigr| \thinspace \bigl\|fv_1^\frac{1}{p_1}\bigr\|^{p_1}_{\tau_1,Q_j^k}\right)^{\frac{q}{p_1}} \left(\sum_{k,j} \thinspace \bigl|E_j^k\bigr| \thinspace \bigl\|gv_2^\frac{1}{p_2}\bigr\|^{p_2}_{\tau_2,Q_j^k}\right)^{\frac{q}{p_2}} \\
    & \leqslant \left(\sum_{k,j} \int_{E_j^k} M_{\tau_1}\Bigl(fv_1^\frac{1}{p_1}\Bigr)(x)^{p_1}dx\right)^{\frac{q}{p_1}} \left(\sum_{k,j} \int_{E_j^k} M_{\tau_2}\Bigl(gv_2^\frac{1}{p_2}\Bigr)(x)^{p_2}dx\right)^{\frac{q}{p_2}} \\
    & \leqslant \left(\int_{\mathbb{R}^n} M_{\tau_1}\Bigl(fv_1^\frac{1}{p_1}\Bigr)(x)^{p_1}dx\right)^{\frac{q}{p_1}} \left(\int_{\mathbb{R}^n} M_{\tau_2}\Bigl(gv_2^\frac{1}{p_2}\Bigr)(x)^{p_2}dx\right)^{\frac{q}{p_2}} \\
    & \lesssim \|f\|^q_{L^{p_1}(v_1)} \|g\|^q_{L^{p_2}(v_2)}.
\end{split}
\end{equation}

\hfill

Substituting the result in \eqref{eq47} into \eqref{eq44} will give us the desired estimate

$$
\bigl\|[\vec{b},\mathsf{BI}_{\alpha}]_{\vec{\beta}}(f,g)\bigr\|_{L^q(u)} \lesssim \|\vec{b}\|\left\|f\right\|_{L^{p_1}(v_1)} \left\|g\right\|_{L^{p_2}(v_2)}.
$$

\section{Proof of Theorem \ref{thmB}}

By duality, it suffices to prove that for all $f\in L^{p_1}(v_1)$, all $g\in L^{p_2}(v_2)$ and all  $h\in L^{q'}(\mathbb{R}^n)$ with $\|h\|_{q'}=1$,

\begin{equation*}
\int_{\mathbb{R}^n}\bigl|[\vec{b}, \mathsf{BI}_\alpha]_{\vec{\beta}}(f,g)(x)\bigr| \thinspace h(x) \thinspace u(x)^\frac{1}{q} \thinspace dx \lesssim \|\vec{b}\|\left\|f\right\|_{L^{p_1}(v_1)} \left\|g\right\|_{L^{p_2}(v_2)}.
\end{equation*}

\hfill

Without loss of generality, we may assume that $f$ and $g$ are non-negative, bounded and compactly supported. From \eqref{eq42}, we have

\begin{align*}
\lefteqn{ \int_{\mathbb{R}^n}\bigl|[\vec{b}, \mathsf{BI}_\alpha]_{\vec{\beta}}(f,g)(x)\bigr| \thinspace h(x) \thinspace u(x)^\frac{1}{q} \thinspace dx} \\
    & \lesssim \sum_{A\subseteq\{1,...,m\}}\sum_{B\subseteq\{m+1,...,N\}} \sum_{Q\in\mathscr{D}} |Q|^{\frac{\alpha}{n}-1} \\
    & \hspace{1.7cm} \int_Q\int_{|y|_\infty\leqslant\ell(Q)} \prod_{i\in \bar{A}}|b_i(x-y)-\lambda_i| \prod_{i\in \bar{B}}|b_i(x+y)-\lambda_i| f(x-y)g(x+y)\thinspace dy \\
    & \hspace{7.6cm} \prod_{i\in A\cup B}|b_i(x)-\lambda_i| \thinspace \thinspace h(x) \thinspace u(x)^\frac{1}{q} \thinspace dx.
\end{align*}

If we use H\"older inequality with the pair $(r,s)$ for the inner integral, and then perform a change in variables, we will get

\begin{align} 
\lefteqn{\int_{\mathbb{R}^n}\bigl|[\vec{b}, \mathsf{BI}_\alpha]_{\vec{\beta}}(f,g)(x)\bigr| \thinspace h(x) \thinspace u(x)^\frac{1}{q} \thinspace dx}\nonumber \\
    & \lesssim \sum_{A\subseteq\{1,...,m\}}\sum_{B\subseteq\{m+1,...,N\}} \sum_{Q\in\mathscr{D}} |Q|^{\frac{\alpha}{n}-1} \nonumber\\
    & \hspace{2cm} \left[\int_{3Q} \prod_{i\in \bar{A}}|b_i(t)-\lambda_i|^rf(t)^r dt\right]^\frac{1}{r} \left[\int_{3Q} \prod_{i\in \bar{B}}|b_i(z)-\lambda_i|^sg(z)^s dz\right]^\frac{1}{s} \nonumber\\
    & \hspace{7.1cm} \int_Q\prod_{i\in A\cup B}|b_i(x)-\lambda_i| \thinspace \thinspace h(x) \thinspace u(x)^\frac{1}{q} \thinspace dx.\nonumber \\ \label{eq51}
\end{align}
We now use the generalized H\"older inequality, Theorem \ref{thm7} and Corollary \ref{thm8} to get the following estimates.

\begin{equation*}
\begin{split}
    \dashint_{3Q} \prod_{i\in \bar{A}}|b_i(t)-\lambda_i|^rf(t)^r\thinspace dt
    & \lesssim \prod_{i\in \bar{A}} \||b_i-\lambda_i|^r\|_{\exp(L^\frac{1}{r}),3Q} \hspace{1mm}\|f^r\|_{L(\log L)^{r|\bar{A}|},3Q} \\
    & \lesssim \prod_{i\in \bar{A}} \|b_i\|^r_{BMO} \hspace{1mm} \|f^r\|_{L(\log L)^{r|\bar{A}|},3Q}
\end{split}
\end{equation*}
\begin{equation*}
\begin{split}
    \dashint_{3Q} \prod_{i\in \bar{B}}|b_i(z)-\lambda_i|^sg(z)^s\thinspace dz
    & \lesssim \prod_{i\in \bar{B}} \||b_i-\lambda_i|^s\|_{\exp(L^\frac{1}{s}),3Q} \hspace{1mm}\|g^s\|_{L(\log L)^{s|\bar{B}|},3Q} \\
    & \lesssim \prod_{i\in \bar{B}} \|b_i\|^s_{BMO} \hspace{1mm} \|g^s\|_{L(\log L)^{s|\bar{B}|},3Q}
\end{split}
\end{equation*}
\begin{equation*}
\begin{split}
    \dashint_Q\prod_{i\in A\cup B}|b_i(x)-\lambda_i| \thinspace \thinspace h(x) \thinspace u(x)^\frac{1}{q} \thinspace dx & \lesssim \prod_{i\in A\cup B}\|b_i-\lambda_i\|_{\exp L,3Q} \thinspace \|hu^\frac{1}{q}\|_{L(\log L)^{|A\cup B|},3Q} \\
    & \lesssim \prod_{i\in A\cup B} \|b_i\|_{BMO} \thinspace \|hu^\frac{1}{q}\|_{L(\log L)^{|A\cup B|},3Q}.
\end{split}
\end{equation*}

Substituting these estimates into \eqref{eq51} and using the facts: $|\bar{A}|\leqslant m$, $|\bar{B}|\leqslant N-m$, $|A\cup B|\leqslant N$, and stronger Young functions provide bigger Orlicz norms, we come up with the following estimates.

\begin{align} 
\lefteqn{ \int_{\mathbb{R}^n}\bigl|[\vec{b}, \mathsf{BI}_\alpha]_{\vec{\beta}}(f,g)(x)\bigr| \thinspace h(x) \thinspace u(x)^\frac{1}{q} \thinspace dx} \nonumber\\
    & \lesssim \|\vec{b}\| \sum_{A\subseteq\{1,...,m\}}\sum_{B\subseteq\{m+1,...,N\}} \sum_{Q\in\mathscr{D}} |Q|^{\frac{\alpha}{n}+1} \|f^r\|^\frac{1}{r}_{L(\log L)^{mr},3Q} \thinspace \|g^s\|^\frac{1}{s}_{L(\log L)^{(N-m)s},3Q}\nonumber\\
    & \hspace{2.5cm} \times \|hu^\frac{1}{q}\|_{L(\log L)^N,3Q}\nonumber \\
    & \lesssim \|\vec{b}\| \sum_{A}\sum_{B} \sum_{t=1}^{3^n} \sum_{Q\in\mathscr{D}} |Q|^{\frac{\alpha}{n}+1} \|f^r\|^\frac{1}{r}_{L(\log L)^{mr},Q} \thinspace \|g^s\|^\frac{1}{s}_{L(\log L)^{(N-m)s},Q} \thinspace \|hu^\frac{1}{q}\|_{L(\log L)^N,Q}.\nonumber \\ \label{eq52}
\end{align}

It suffices to control the inner most sum of the last expression in \eqref{eq52} for a general dyadic grid $\mathscr{D}$.  To do so, we will replace the sum over dyadic cubes with the sum over a spare family of Calder\'on-Zygmund cubes. Let $a>1$ be a number that will be chosen later. For each $k\in\mathbb{Z}$, let $\{Q_j^k\}_j$ be a collection of disjoint dyadic cubes that are maximal with respect to
$$
\|f^r\|^\frac{1}{r}_{L(\log L)^{mr},Q} \thinspace \|g^s\|^\frac{1}{s}_{L(\log L)^{(N-m)s},Q} > a^k.
$$

Let $\Omega_k=\bigcupdot_jQ_j^k$ and $E_j^k=Q_j^k\setminus\Omega_{k+1}$, so that the family $\{E_j^k\}_{j,k}$ is pairwise disjoint and $\bigl|Q_j^k\bigr|\leqslant2\bigl|E_j^k\bigr|$. In fact,

\begin{align*}
\lefteqn{\bigl|Q_j^k \cap \thinspace\Omega_{k+1}\bigr| = \sum_{Q_i^{k+1}\subseteq Q_j^k}\bigl|Q_i^{k+1}\bigr|} \\
    & \leqslant \frac{1}{a^{k+1}} \sum_{i} \Bigl(\bigl|Q_i^{k+1}\bigr| \thinspace \|f^r\|_{L(\log L)^{mr},Q_i^{k+1}}\Bigr)^\frac{1}{r} \Bigl(\bigl|Q_i^{k+1}\bigr| \thinspace \|g^s\|_{L(\log L)^{(N-m)s},Q_i^{k+1}}\Bigr)^\frac{1}{s}. \\
    & \leqslant \frac{1}{a^{k+1}} \left(\sum_{i} \bigl|Q_i^{k+1}\bigr| \thinspace \|f^r\|_{L(\log L)^{mr},Q_i^{k+1}}\right)^\frac{1}{r} \left(\sum_{i} \bigl|Q_i^{k+1}\bigr| \thinspace \|g^s\|_{L(\log L)^{(N-m)s},Q_i^{k+1}}\right)^\frac{1}{s}.
     \end{align*}
 If $\lambda,\mu>0$ then the previous sum is bounded by
 \begin{align*}
    & \leqslant \frac{1}{a^{k+1}} \left[\sum_{i} \bigl|Q_i^{k+1}\bigr| \left(\lambda + \frac{\lambda}{\bigl|Q_i^{k+1}\bigr|}\int_{Q_i^{k+1}}\gamma_1\left(\frac{|f|^r}{\lambda}\right)\right)\right]^\frac{1}{r} \\
    & \hspace{2.5cm} \left[\sum_{i} \bigl|Q_i^{k+1}\bigr| \left(\mu + \frac{\mu}{\bigl|Q_i^{k+1}\bigr|}\int_{Q_i^{k+1}}\gamma_2\left(\frac{|g|^s}{\mu}\right)\right)\right]^\frac{1}{s}, \hspace{1mm} \forall \mu>0 \\
     & = \frac{1}{a^{k+1}} \left[\sum_{i} \lambda \int_{Q_i^{k+1}}\left(1+\gamma_1\left(\frac{|f|^r}{\lambda}\right)\right)\right]^\frac{1}{r} \left[\sum_{i}\mu\int_{Q_i^{k+1}}\left(1+\gamma_2\left(\frac{|g|^s}{\mu}\right) \right)\right]^\frac{1}{s}
\end{align*}

\begin{equation*}
\begin{split}
    & \leqslant \frac{1}{a^{k+1}} \left[\lambda \int_{Q_j^k}\left(1+\gamma_1\left(\frac{|f^r|}{\lambda}\right)\right)\right]^\frac{1}{r} \left[\mu\int_{Q_j^k}\left(1+\gamma_2\left(\frac{|g^s|}{\mu}\right) \right)\right]^\frac{1}{s} \\
    & \leqslant \frac{2^n}{a^{k+1}} \bigl|Q_j^k\bigr| \left[\lambda + \frac{\lambda}{\bigl|P\bigr|} \int_{P}\gamma_1\left(\frac{|f^r|}{\lambda}\right)\right]^\frac{1}{r} \left[\mu + \frac{\mu}{\bigl|P\bigr|} \int_{P}\gamma_2\left(\frac{|g^s|}{\mu}\right)\right]^\frac{1}{s}
\end{split}
\end{equation*}
where $\gamma_1(t)=t\log(e+t)^{mr}$, $\gamma_2(t)=t\log(e+t)^{(N-m)s}$, and $P$ is an immediate dyadic parent of $Q_j^k$. By taking infimum over all $\lambda>0$ and all $\mu>0$, we have
\begin{equation*}
\begin{split}
    \bigl|Q_j^k\cap \Omega_{k+1}\bigr| & \leqslant \frac{2^{n+1}}{a^{k+1}} \bigl|Q_j^k\bigr| \|f\|^\frac{1}{r}_{L(\log L)^{mr},P}\thinspace \|g\|^\frac{1}{s}_{L(\log L)^{(N-m)s},P} \\
    & \leqslant \frac{2^{n+1}}{a^{k+1}} \bigl|Q_j^k\bigr| \thinspace a^k = \frac{2^{n+1}}{a}\bigl|Q_j^k\bigr| 
\end{split}
\end{equation*}
where the last inequality comes from the maximality of $Q_j^k$. With an appropriate choice of $a$, we will have $\bigl|Q_j^k\bigr|\leqslant2\bigl|E_j^k\bigr|$. Now, let
$$
C_k=\left\{Q\in\mathscr{D}:\thinspace a^k < \|f^r\|^\frac{1}{r}_{L(\log L)^{mr},Q} \thinspace \|g^s\|^\frac{1}{s}_{L(\log L)^{(N-m)s},Q} \leqslant a^{k+1}\right\}
$$
and notice that every $Q\in \mathscr{D}$ for which the summand of $\mathsf{S}$ is non-zero must be in some $C_k$, and every $Q\in C_k$ is contained in a unique $Q_j^k$. So we have

\begin{equation} \label{eq53}
\begin{split}
    \mathsf{S}& \leqslant \sum_{k\in\mathbb{Z}}\sum_{Q\in C_k} |Q|^{\frac{\alpha}{n}+1} \|f^r\|^\frac{1}{r}_{L(\log L)^{mr},Q} \thinspace \|g^s\|^\frac{1}{s}_{L(\log L)^{(N-m)s},Q} \|hu^\frac{1}{q}\|_{L(\log L)^N,Q} \\
    & \leqslant \sum_{k\in\mathbb{Z}} a^{k+1} \sum_{Q\in C_k} |Q|^{\frac{\alpha}{n}+1} \thinspace \|hu^\frac{1}{q}\|_{L(\log L)^N,Q} \\
    & \leqslant \sum_{k\in\mathbb{Z}} a^{k+1} \sum_{j\in\mathbb{Z}} \sum_{\substack{Q\in \mathscr{D} \\ Q\subseteq Q_j^k}} |Q|^{\frac{\alpha}{n}+1} \thinspace \|hu^\frac{1}{q}\|_{L(\log L)^N,Q}.
\end{split}
\end{equation}
For all $\lambda>0$ the most inner sum is bounded by
\begin{equation*}
\begin{split}
    & \leqslant \sum_{\substack{Q\in \mathscr{D} \\ Q\subseteq Q_j^k}} |Q|^{\frac{\alpha}{n}+1} \left[ \lambda + \frac{\lambda}{|Q|}\int_{Q} \gamma\left(\frac{\bigl|hu^\frac{1}{q}\bigr|}{\lambda}\right)\right], \hspace{2mm}  \\
    & = \sum_{r=0}^{\infty}\sum_{\substack{Q\in \mathscr{D}, \thinspace Q\subseteq Q_j^k \\ \ell(Q)=2^{-r}\ell(Q_j^k)}} |Q|^{\frac{\alpha}{n}}  \lambda \int_{Q} \left[1+\gamma\left(\frac{\bigl|hu^\frac{1}{q}\bigr|}{\lambda}\right)\right] \\
    & = \lambda \bigl|Q_j^k\bigr|^{\frac{\alpha}{n}} \sum_{r=0}^{\infty} 2^{-r\alpha} \sum_{\substack{Q\in \mathscr{D}, \thinspace Q\subseteq Q_j^k \\ \ell(Q)=2^{-r}\ell(Q_j^k)}} \int_{Q} \left[1+\gamma\left(\frac{\bigl|hu^\frac{1}{q}\bigr|}{\lambda}\right)\right] \\
    & = \frac{2^{\alpha}}{2^{\alpha}-1}\thinspace \bigl|Q_j^k\bigr|^{\frac{\alpha}{n}+1} \left[\lambda + \frac{\lambda}{\bigl|Q_j^k\bigr|} \int_{Q_j^k}\gamma\left(\frac{\bigl|hu^\frac{1}{q}\bigr|}{\lambda}\right)\right], \hspace{2mm} 
\end{split}
\end{equation*}
where $\gamma(t)=t\log(e+t)^N$. By taking infimum over all $\lambda>0$ and then substituting the result into \eqref{eq53}, we end up having

\begin{equation} \label{eq54}
\begin{split}
   \mathsf{S} & \lesssim \sum_{k\in\mathbb{Z}} a^{k+1} \sum_{j\in\mathbb{Z}} \thinspace \bigl|Q_j^k\bigr|^{\frac{\alpha}{n}+1} \thinspace \|hu^\frac{1}{q}\|_{L(\log L)^N,Q_j^k} \\
    & \lesssim \sum_{k,j} \thinspace \bigl|Q_j^k\bigr|^{\frac{\alpha}{n}+1} \|f^r\|^\frac{1}{r}_{L(\log L)^{mr},Q_j^k} \thinspace \|g^s\|^\frac{1}{s}_{L(\log L)^{(N-m)s},Q_j^k} \thinspace \|hu^\frac{1}{q}\|_{L(\log L)^N,Q_j^k}.
\end{split}
\end{equation}

Now we consider the following Young functions.
\begin{equation*}
\begin{split}
    \tau_1(t) & =\frac{t^{\frac{p_1}{r}}}{\log(e+t)^{1+(\frac{p_1}{r}-1)\delta}} \\
    \tau_2(t) & =\frac{t^{\frac{p_2}{s}}}{\log(e+t)^{1+(\frac{p_2}{s}-1)\delta}} \\
    \tau(t) & =\frac{t^{q'}}{\log(e+t)^{1+(q'-1)\delta}}
\end{split}
\end{equation*}

Straight forward calculations show that $\tau_1\in B_{\frac{p_1}{r}}$, $\tau_2\in B_{\frac{p_2}{s}}$, $\tau\in B_{q'}$, and
\begin{equation*}
\begin{split}
    \tau_1^{-1}(t) \thinspace \phi_1^{-1}(t) &\approx \frac{t}{\log(e+t)^{mr}}  \\
    \tau_2^{-1}(t) \thinspace \phi_2^{-1}(t) &\approx \frac{t}{\log(e+t)^{(N-m)s}}   \\
    \tau^{-1}(t) \thinspace \psi^{-1}(t) &\approx \frac{t}{\log(e+t)^N}. 
\end{split}
\end{equation*}

So, by using the generalized H\"older inequality and the imposed conditions on the weights, from \eqref{eq54} we have

\begin{equation*}
\begin{split}
\mathsf{S}& \lesssim \sum_{k,j} \thinspace \bigl|Q_j^k\bigr|^{\frac{\alpha}{n}+1} \bigl\|f^rv_1^\frac{r}{p_1}\bigr\|^\frac{1}{r}_{\tau_1,Q_j^k} \thinspace \bigl\|v_1^{-\frac{r}{p_1}}\bigr\|^\frac{1}{r}_{\phi_1,Q_j^k} \thinspace \bigl\|g^sv_2^\frac{s}{p_2}\bigr\|^\frac{1}{s}_{\tau_2,Q_j^k} \thinspace \bigl\|v_2^{-\frac{s}{p_2}}\bigr\|^\frac{1}{s}_{\phi_2,Q_j^k} \\
    & \hspace{9.2cm} \|h\|_{\tau,Q_j^k} \thinspace \|u^\frac{1}{q}\|_{\psi,Q_j^k} \\
    & \lesssim \sum_{k,j} \thinspace \bigl|Q_j^k\bigr|^{\frac{1}{p}+\frac{1}{q'}} \bigl\|f^rv_1^\frac{r}{p_1}\bigr\|^\frac{1}{r}_{\tau_1,Q_j^k} \thinspace \bigl\|g^sv_2^\frac{s}{p_2}\bigr\|^\frac{1}{s}_{\tau_2,Q_j^k} \thinspace \|h\|_{\tau,Q_j^k}.
\end{split}
\end{equation*}
We are going to use: the fact that $\bigl|Q_j^k\bigr|\leqslant2\bigl|E_j^k\bigr|$, Proposition \ref{thm9} with the triple $(p_1,p_2,q')$, and Theorem \ref{thm3} to obtain the following estimates.

\begin{align} 
   \lefteqn{\mathsf{S} \lesssim \sum_{k,j} \thinspace  \left(\bigl\|f^rv_1^\frac{r}{p_1}\bigr\|^\frac{1}{r}_{\tau_1,Q_j^k} \thinspace \bigl|E_j^k\bigr|^{\frac{1}{p_1}}\right) \left(\bigl\|g^sv_2^\frac{s}{p_2}\bigr\|^\frac{1}{s}_{\tau_2,Q_j^k} \thinspace \bigl|E_j^k\bigr|^{\frac{1}{p_2}}\right) \left(\|h\|_{\tau,Q_j^k} \thinspace \bigl|E_j^k\bigr|^{\frac{1}{q'}}\right)}\nonumber \\
    & \leqslant \left[\sum_{k,j} \thinspace \bigl\|f^rv_1^\frac{r}{p_1}\bigr\|^\frac{p_1}{r}_{\tau_1,Q_j^k} \thinspace \bigl|E_j^k\bigr|\right]^\frac{1}{p_1} \left[\sum_{k,j} \thinspace \bigl\|g^sv_2^\frac{s}{p_2}\bigr\|^\frac{p_2}{s}_{\tau_2,Q_j^k} \thinspace \bigl|E_j^k\bigr|\right]^\frac{1}{p_2} \left[\sum_{k,j} \thinspace \|h\|^{q'}_{\tau,Q_j^k} \thinspace \bigl|E_j^k\bigr|\right]^\frac{1}{q'}  \nonumber\\ \label{eq55}
\end{align}

\begin{equation*}
\begin{split}
    & \leqslant \left[\sum_{k,j} \int_{E_j^k} M_{\tau_1}\Bigl(f^rv_1^\frac{r}{p_1}\Bigr)(x)^\frac{p_1}{r}dx\right]^{\frac{1}{p_1}} \left[\sum_{k,j} \int_{E_j^k} M_{\tau_2}\Bigl(g^sv_2^\frac{s}{p_2}\Bigr)(x)^\frac{p_2}{s}dx\right]^{\frac{1}{p_2}} \\
    & \hspace{8.2cm} \left[\sum_{k,j} \int_{E_j^k} M_{\tau}(h)(x)^{q'}dx\right]^{\frac{1}{q'}}\\
    & \leqslant \left[\int_{\mathbb{R}^n} M_{\tau_1}\Bigl(f^rv_1^\frac{r}{p_1}\Bigr)(x)^\frac{p_1}{r}dx\right]^{\frac{1}{p_1}} \left[\int_{\mathbb{R}^n} M_{\tau_2}\Bigl(g^sv_2^\frac{s}{p_2}\Bigr)(x)^\frac{p_2}{s}dx\right]^{\frac{1}{p_2}} \\
    & \hspace{8.9cm} \left[\int_{\mathbb{R}^n} M_{\tau}(h)(x)^{q'}dx\right]^{\frac{1}{q'}} \\
    & \lesssim \|f\|_{L^{p_1}(v_1)} \|g\|_{L^{p_2}(v_2)} \|h\|_{q'}.
\end{split}
\end{equation*}
Since $\mathsf{S}$ can be any term in \eqref{eq52}, and the number of terms in \eqref{eq52} is finite, substituting the result in \eqref{eq55} into \eqref{eq52} will complete our proof.

\hfill

\section{Proof of Theorem \ref{thmC}}

Again, without loss of generality, we may restrict ourselves  onto working with $f$ and $g$ that are non-negative, bounded and compactly supported. Thanks to Theorem \ref{thm6}, we only need to verify the inequality for a certain $q_0\in(0,\infty)$ and an arbitrary weight $w\in A_\infty$. We will work with $q_0=1$. By mimicking what we did in the proof of Theorem \ref{thmB}, we have

\begin{multline} 
   \int_{\mathbb{R}^n}\bigl|[\vec{b},  \mathsf{BT}_\alpha]_{\vec{\beta}}(f,g)(x)\bigr| \thinspace w(x) \thinspace dx \\
     \lesssim \|\vec{b}\| \sum_{k,j} |Q_j^k|^{\frac{\alpha}{n}+1} \|f^r\|^\frac{1}{r}_{L(\log L)^{mr},Q_j^k} \thinspace \|g^s\|^\frac{1}{s}_{L(\log L)^{(N-m)s},Q_j^k} \thinspace \|w\|_{L(\log L)^N,Q_j^k}\label{eq61}.
\end{multline}

Since $w\in A_\infty$, there exist, by Lemma \ref{thm10}, a number $m>1$ such that
$$
\left(\dashint_Qw^{m}\right)^{\frac{1}{m}} \lesssim \thinspace \dashint_Qw.
$$
The Young function $\psi(t)=t^m$ is stronger than $\phi(t)=t\log(e+t)^N$, which implies
$$
\|w\|_{L(\log L)^N,Q_j^k} \lesssim \left(\dashint_Qw^{m}\right)^{\frac{1}{m}} \lesssim \thinspace \dashint_Qw.
$$
Substituting this result into \eqref{eq61}, we have

\begin{equation*}
\begin{split}
   \int_{\mathbb{R}^n}\bigl|[\vec{b},\mathsf{BT}_\alpha &]_{\vec{\beta}}(f,g)(x)\bigr| \thinspace w(x) \thinspace dx \\
    & \lesssim \|\vec{b}\| \sum_{k,j} |Q_j^k|^{\frac{\alpha}{n}} \|f^r\|^\frac{1}{r}_{L(\log L)^{mr},Q_j^k} \thinspace \|g^s\|^\frac{1}{s}_{L(\log L)^{(N-m)s},Q_j^k} \thinspace w(Q_j^k) \\
    & \lesssim \|\vec{b}\| \sum_{k,j} |Q_j^k|^{\frac{\alpha}{n}} \|f^r\|^\frac{1}{r}_{L(\log L)^{mr},Q_j^k} \thinspace \|g^s\|^\frac{1}{s}_{L(\log L)^{(N-m)s},Q_j^k} \thinspace w(E_j^k) \\
    & \leqslant \|\vec{b}\| \sum_{k,j} \int_{E_j^k} \mathcal{M}^{r,s}_{\alpha}(f,g)(x) \thinspace w(x) \thinspace dx \\
    & \leqslant \|\vec{b}\| \int_{\mathbb{R}^n} \mathcal{M}^{r,s}_{\alpha}(f,g)(x) \thinspace w(x) \thinspace dx
\end{split}
\end{equation*}
where the second inequality is due to Lemma \ref{thm10} and the fact that $w\in A_\infty$.

\hfill

\section{Proof of Theorem \ref{thmG}}

\noindent [\textit{Condition \eqref{eq21} $\Rightarrow$ the weak type boundedness}] 

In light of Theorem \ref{thm1}, it is not hard to see
\begin{equation*}
  \M^{r,s}_{\alpha}(f,g)(x) \leqslant 6^{n-\alpha} \sum_{t\in \{0,1/3\}^n}{\M^{r,s,\mathscr{D}^t}_{\alpha}(f,g)(x)}
\end{equation*}
where
$$
\M^{r,s,\mathscr{D}}_{\alpha}(f,g)(x)=\sup_{\mathscr{D}\ni Q\ni x}{|Q|^{\frac{\alpha}{n}}\left(\dashint_Q{|f|^r}\right)^{\frac{1}{r}}\left(\dashint_Q{|g|^s}\right)^{\frac{1}{s}}}.
$$

So, we will only need to prove the weak type boundedness for $\M^{r,s,\mathscr{D}}_{\alpha}$ where $\mathscr{D}$ is an arbitrary dyadic grid. Without loss of generality, we may assume that $f,g$ are non-negative, bounded and compactly supported. By performing the Calder\'on-Zygmund decomposition algorithm, we have
\begin{equation} \label{eq71}
    E_{\lambda}=\{x\in \mathbb{R}^n:\thinspace \M^{r,s,\mathscr{D}}_{\alpha}(f,g)(x)>\lambda\} = \bigcupdot_j{Q_j}
\end{equation}
where $Q_j$'s are pairwise disjoint maximal dyadic cubes that satisfy
\begin{equation} \label{eq72}
   |Q_j|^{\frac{\alpha}{n}}\left(\dashint_{Q_j}{f^r}\right)^{\frac{1}{r}}\left(\dashint_{Q_j}{g^s}\right)^{\frac{1}{s}}>\lambda.
\end{equation}

From \eqref{eq71} and \eqref{eq72} we have
\begin{equation*}
\begin{split}
    u\left(E_{\lambda}\right) & = \sum_j\int_{Q_j}u= \sum_j|Q_j|\thinspace\dashint_{Q_j}u \\
    & \leqslant \frac{1}{\lambda^q} \sum_j{|Q_j|^{\frac{q\alpha}{n}+1}\left(\dashint_{Q_j}{u}\right)\left(\dashint_{Q_j}{f^r}\right)^{\frac{q}{r}}\left(\dashint_{Q_j}{g^s}\right)^{\frac{q}{s}}} \\
    & \leqslant \frac{1}{\lambda^q} \left[\sum_j{|Q_j|^{\frac{p\alpha}{n}+\frac{p}{q}}\left(\dashint_{Q_j}{u}\right)^{\frac{p}{q}}\left(\dashint_{Q_j}{f^r}\right)^{\frac{p}{r}}\left(\dashint_{Q_j}{g^s}\right)^{\frac{p}{s}}}\right]^{\frac{q}{p}}.
\end{split}
\end{equation*}

By using H\"older inequality for the second and the third dashed integrals and then using condition \eqref{eq21}, we obtain
\begin{equation*}
\begin{split}
    u\left(E_{\lambda}\right) & \leqslant \frac{1}{\lambda^q}\Bigg[
    \sum_j|Q_j|^{\frac{p\alpha}{n}+\frac{p}{q}-1}\left(\dashint_{Q_j}{u}\right)^{\frac{p}{q}}\left(\dashint_Q{v_1^{-\frac{r}{p_1-r}}}\right)^{\frac{p_1-r}{p_1r}p} \left(\dashint_Q{v_2^{-\frac{s}{p_2-s}}}\right)^{\frac{p_2-s}{p_2s}p} \\
    & \hspace{6.2cm}\left(\int_{Q_j}{f^{p_1}v_1}\right)^{\frac{p}{p_1}}\left(\int_{Q_j}{g^{p_2}v_2}\right)^{\frac{p}{p_2}}
    \Bigg]^{\frac{q}{p}} \\
    & \lesssim \frac{1}{\lambda^q} \left[\sum_j{\left(\int_{Q_j}{f^{p_1}v_1}\right)^{\frac{p}{p_1}}\left(\int_{Q_j}{g^{p_2}v_2}\right)^{\frac{p}{p_2}}}\right]^{\frac{q}{p}} \\
    & \leqslant \frac{1}{\lambda^q} \left(\sum_j{\int_{Q_j}{f^{p_1}v_1}}\right)^{\frac{q}{p_1}} \left(\sum_j{\int_{Q_j}{g^{p_2}v_2}}\right)^{\frac{q}{p_2}} \\
    & \leqslant \frac{1}{\lambda^q} \thinspace \|f\|_{L^{p_1}(v_1)}^q \|g\|_{L^{p_2}(v_2)}^q.
\end{split}
\end{equation*}

\hfill

\hfill

\noindent [\textit{The weak type boundedness $\Rightarrow$ condition \eqref{eq21}}]

For any cube $Q$, let $f=v_1^{-\frac{1}{p_1-r}}\chi_Q$ and $g=v_2^{-\frac{1}{p_2-s}}\chi_Q$.

If $\left(\int_Q{f^r}\right)^{\frac{1}{r}}\left(\int_Q{g^s}\right)^{\frac{1}{s}}>0$, then by choosing $\lambda=\frac{1}{2}|Q|^{\frac{\alpha}{n}}\left(\dashint_Q{f^r}\right)^{\frac{1}{r}}\left(\dashint_Q{g^s}\right)^{\frac{1}{s}}$, from the weak type boundedness we have
$$
u(Q)^{\frac{1}{q}}|Q|^{\frac{\alpha}{n}}\left(\dashint_Q{f^r}\right)^{\frac{1}{r}}\left(\dashint_Q{g^s}\right)^{\frac{1}{s}}\leqslant 2C \left(\int_{\mathbb{R}^n}f^{p_1}v_1\right)^{\frac{1}{p_1}}\left(\int_{\mathbb{R}^n}g^{p_2}v_2\right)^{\frac{1}{p_2}}.
$$

Now, if we substitute our specific choices of $f$ and $g$ into the expression, we have
$$
u(Q)^{\frac{1}{q}}|Q|^{\frac{\alpha}{n}-1}\left(\int_Q{v_1^{-\frac{r}{p_1-r}}}\right)^{\frac{1}{r}-\frac{1}{p_1}}\left(\int_Q{v_2^{-\frac{s}{p_2-s}}}\right)^{\frac{1}{s}-\frac{1}{p_2}}\leqslant 2C
$$
which is equivalent to
$$
|Q|^{\frac{\alpha}{n}+\frac{1}{q}-\frac{1}{p}}\left(\dashint_Qu\right)^{\frac{1}{q}}\left(\dashint_Q{v_1^{-\frac{r}{p_1-r}}}\right)^{\frac{p_1-r}{p_1r}}\left(\int_Q{v_2^{-\frac{s}{p_2-s}}}\right)^{\frac{p_2-s}{p_2s}}\leqslant 2C
$$
and this finishes the proof.

\hfill

\section{Proof of Theorem \ref{thmH}}

As explained previously, we only need to treat the dyadic operator $\M^{r,s,\mathscr{D}}_{\alpha}$, and work with non-negative, bounded and compactly supported functions $f$ and $g$. Let $a>1$ to be chosen later. For each $k\in\mathbb{Z}$, we have
$$
\Omega_k=\{x\in\mathbb{R}^n:\thinspace \M^{r,s,\mathscr{D}}_{\alpha}(f,g)(x)>a^k\}=\bigcupdot_jQ_j^k
$$
where $Q_j^k$'s are pairwise disjoint maximal dyadic cubes satisfying
$$
   |Q_j^k|^{\frac{\alpha}{n}}\left(\dashint_{Q_j^k}{f^r}\right)^{\frac{1}{r}}\left(\dashint_{Q_j^k}{g^s}\right)^{\frac{1}{s}}>a^k.
$$

Let $E_j^k=Q_j^k\setminus\Omega_{k+1}$, then by a similar (in fact easier) argument as in the proof of theorem \ref{thmB} we have a disjoint family $\{E_j^k\}_{k,j}$ and that $|Q_j^k|\leqslant2\thinspace|E_j^k|$ with an appropriate choice of $a$. For any $h\in L^{q'}(\mathbb{R}^n)$, we have
\begin{equation*}
\begin{split}
    \int_{\mathbb{R}^n}\M^{r,s,\mathscr{D}}_{\alpha}(f,g) & (x) \thinspace h(x)\thinspace u(x)^\frac{1}{q} \thinspace dx \\
    & = \sum_{k\in \mathbb{Z}}\int_{\Omega_k\setminus\Omega_{k+1}}\M^{r,s,\mathscr{D}}_{\alpha}(f,g) (x) \thinspace h(x)\thinspace u(x)^\frac{1}{q} \thinspace dx \\
    & \leqslant \sum_{k\in \mathbb{Z}}a^{k+1}\sum_j\int_{Q_j^k}h(x)\thinspace u(x)^\frac{1}{q} \thinspace dx \\
    & \leqslant a\sum_{k,j}\left|Q_j^k\right|^{\frac{\alpha}{n}}\left(\dashint_{Q_j^k}{f^r}\right)^{\frac{1}{r}}\left(\dashint_{Q_j^k}{g^s}\right)^{\frac{1}{s}}\int_{Q_j^k}{h(x)\thinspace u(x)^\frac{1}{q} \thinspace dx} \\
    & \lesssim \sum_{k,j} \left|Q_j^k\right|^{\frac{\alpha}{n}+1} \bigl\|f^rv_1^\frac{r}{p_1}\bigr\|_{\bar{\phi}_1,Q_j^k}^{\frac{1}{r}} \bigl\|v_1^\frac{-r}{p_1}\bigr\|_{\phi_1,Q_j^k}^{\frac{1}{r}}  \bigl\|g^sv_2^\frac{s}{p_2}\bigr\|_{\bar{\phi}_2,Q_j^k}^{\frac{1}{s}} \bigl\|v_2^\frac{-s}{p_2}\bigr\|_{\phi_2,Q_j^k}^{\frac{1}{s}} \\
    & \hspace{7.4cm} \|h\|_{\bar{\psi},Q_j^k} \|u^\frac{1}{q}\|_{\psi,Q_j^k} \\
    & \lesssim \sum_{k,j} \left(\bigl\|f^rv_1^\frac{r}{p_1}\bigr\|_{\bar{\phi}_1,Q_j^k}^{\frac{1}{r}}|E_j^k|^{\frac{1}{p_1}}\right)\left(\bigl\|g^sv_2^\frac{s}{p_2}\bigr\|_{\bar{\phi}_2,Q_j^k}^{\frac{1}{s}}|E_j^k|^{\frac{1}{p_2}}\right) \\
    & \hspace{7.3cm} \left(\|h\|_{\bar{\psi},Q_j^k}|E_j^k|^{\frac{1}{q'}}\right).
\end{split}
\end{equation*}

From here, our argument will just be similar to that in \eqref{eq55}, where we will need to use the assumptions: $\bar{\psi}\in B_{q'}$, $\bar{\phi}_1\in B_{\frac{p_1}{r}}$ and $\bar{\phi}_2\in B_{\frac{p_2}{s}}$.

\hfill

\section{Proof of Theorem \ref{thmI}}

When $u^\frac{1}{q}=v_1^\frac{1}{p_1}v_2^\frac{1}{p_2}$ and $\frac{1}{q}=\frac{1}{p}-\frac{\alpha}{n}$, condition \eqref{eq21} becomes
\begin{equation} \label{eq91}
    \sup_Q \left(\dashint_Q{v_1^\frac{q}{p_1}v_2^\frac{q}{p_2}}\right)^{\frac{1}{q}} \left(\dashint_Q{v_1^{-\frac{r}{p_1-r}}}\right)^{\frac{p_1-r}{p_1r}} \left(\dashint_Q{v_2^{-\frac{s}{p_2-s}}}\right)^{\frac{p_2-s}{p_2s}} < \infty
\end{equation}
which implies
$$
u \in A_{2q} \quad \text{ and } \quad v_1^{-\frac{r}{p_1-r}} \in A_{\frac{2p_1r}{p_1-r}} \quad \text{ and } \quad v_2^{-\frac{s}{p_2-s}} \in A_{\frac{2p_2s}{p_2-s}}
$$
by using theorem \ref{thm5}. 
Then by theorem \ref{thm10}, there exists $m>1$ such that

\begin{equation*}
\begin{split}
    \left(\dashint_Q{u^m}\right)^{\frac{1}{mq}} & \leqslant \left(\dashint_Q{u}\right)^{\frac{1}{q}} \\
    \left(\dashint_Q{v_1^{-\frac{mr}{p_1-r}}}\right)^{\frac{p_1-r}{mp_1r}} & \leqslant \left(\dashint_Q{v_1^{-\frac{r}{p_1-r}}}\right)^{\frac{p_1-r}{p_1r}} \\
    \left(\dashint_Q{v_2^{-\frac{ms}{p_2-s}}}\right)^{\frac{p_2-s}{mp_2s}} & \leqslant \left(\dashint_Q{v_2^{-\frac{s}{p_2-s}}}\right)^{\frac{p_2-s}{p_2s}}.
\end{split}
\end{equation*}

These inequalities together with \eqref{eq91} imply
\begin{equation} \label{eq92}
    \sup_Q \left(\dashint_Q{u^m}\right)^{\frac{1}{mq}} \left(\dashint_Q{v_1^{-\frac{mr}{p_1-r}}}\right)^{\frac{p_1-r}{mp_1r}} \left(\dashint_Q{v_2^{-\frac{ms}{p_2-s}}}\right)^{\frac{p_2-s}{mp_2s}} < \infty.
\end{equation}

Now, if we consider the Young functions: $\psi(t)=t^{mq}$, $\phi_1(t)=t^{\frac{mp_1}{p_1-r}}$ and $\phi_2(t)=t^{\frac{mp_2}{p_2-s}}$, then we have $\bar{\psi} \in B_{q'}$, $\bar{\phi}_1 \in B_{\frac{p_1}{r}}$ and $\bar{\phi}_2 \in B_{\frac{p_2}{s}}$. Moreover, we can reformulate \eqref{eq92} as
\begin{equation*}
    \sup_Q |Q|^{\frac{\alpha}{n}+\frac{1}{q}-\frac{1}{p}}\bigl\|u^\frac{1}{q}\bigr\|_{\psi,Q}\thinspace\bigl\|v_1^\frac{-r}{p_1}\bigr\|_{\phi_1,Q}^{\frac{1}{r}}\thinspace\bigl\|v_2^\frac{-s}{p_2}\bigr\|_{\phi_2,Q}^{\frac{1}{s}} < \infty
\end{equation*}
where we used the Sobolev condition $\frac{\alpha}{n}+\frac{1}{q}-\frac{1}{p}=0$. This is exactly the condition on the weights $(u,v_1,v_2)$ in Theorem \ref{thmE}, so the conclusion is immediate.

\hfill

\section{Applications and examples}

In \cite{SW} Stein and Weiss proved the following inequality:
$$\int_{\R^n}\int_{\R^n}\frac{f(x)g(y)}{|x|^\gamma|x-y|^\al|y|^\beta}\,dxdy \lesssim \|f\|_p\|g\|_{q'}$$
where $\al$, $\beta$, and $\gamma$ are positive numbers that depend on $p$ and $q$.  
Below we have a bilinear Stein-Weiss inequality for the case when $1<p\leqslant q<\infty$. The case when $\frac{1}{2}<p\leqslant q\leqslant 1$ was done by the second author \cite{K. Moen bilinear}.

\begin{theorem} \label{bilinear Stein-Weiss}
Suppose $1<p_1,p_2<\infty$ and $1<p\leqslant q<\infty$. If $\alpha,\beta,\gamma_1,\gamma_2$ satisfy
\begin{equation} \label{eq101}
    \beta <\frac{n}{q}, \hspace{2mm} \gamma_1<(p-1)\frac{n}{p_1}, \hspace{2mm} \gamma_2<(p-1)\frac{n}{p_2}
\end{equation}
\begin{equation}
    \alpha+\beta+\gamma_1+\gamma_2=n+\frac{n}{q}-\frac{n}{p}
\end{equation}
\begin{equation} \label{eq103}
    \beta+\gamma_1+\gamma_2\geqslant0
\end{equation}
Then we have
\begin{equation} \label{eq104}
   \int_{\mathbb{R}^n}\int_{\mathbb{R}^n}\frac{f(x-y) \thinspace g(x+y) \thinspace h(x)}{|y|^\al \thinspace|x-y|^{\gamma_1} \thinspace|x+y|^{\gamma_2} \thinspace|x|^\beta} \thinspace dxdy \lesssim \|f\|_{p_1}  \thinspace \|g\|_{p_2}  \thinspace \|h\|_{q'}
\end{equation}
for non-negative functions $f,g$, and $h$.  
\end{theorem}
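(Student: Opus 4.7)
The plan is to realize the left-hand side of \eqref{eq104} as a weighted pairing involving the bilinear fractional integral $\mathsf{BI}_{n-\al}$ and then invoke Theorem \ref{thmE}. Setting $F(u)=f(u)|u|^{-\gamma_1}$ and $G(v)=g(v)|v|^{-\gamma_2}$, by the definition of $\mathsf{BI}_{n-\al}$ one has
$$
\mathsf{BI}_{n-\al}(F,G)(x)=\int_{\R^n}\frac{f(x-y)\,g(x+y)}{|x-y|^{\gamma_1}|x+y|^{\gamma_2}|y|^{\al}}\,dy,
$$
so the integral in \eqref{eq104} equals $\int_{\R^n} \mathsf{BI}_{n-\al}(F,G)(x)\,h(x)\,|x|^{-\beta}\,dx$. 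Taking $u(x)=|x|^{-\beta q}$ (so $u^{1/q}=|x|^{-\beta}$), H\"older's inequality bounds this by $\|\mathsf{BI}_{n-\al}(F,G)\|_{L^q(u)}\,\|h\|_{q'}$; hence it suffices to prove
$$
\|\mathsf{BI}_{n-\al}(F,G)\|_{L^q(u)}\lesssim \|f\|_{p_1}\|g\|_{p_2}.
$$

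I would derive this from Theorem \ref{thmE} with the choices $v_1(u)=|u|^{\gamma_1 p_1}$, $v_2(v)=|v|^{\gamma_2 p_2}$, which make $\|F\|_{L^{p_1}(v_1)}=\|f\|_{p_1}$ and $\|G\|_{L^{p_2}(v_2)}=\|g\|_{p_2}$, together with the natural H\"older pair $r=p_1/p$, $s=p_2/p$. Since $1<p<p_i$ for $i=1,2$, one checks $1<r<p_1$, $1<s<p_2$, $1/r+1/s=1$, and $(p_1/r)'=(p_2/s)'=p'$. Take power Young functions $\psi(t)=t^a$ and $\phi_i(t)=t^{b_i}$; then $\bar\psi\in B_{q'}$ is equivalent to $a>q$, while $\bar\phi_1\in B_{p_1/r}$ and $\bar\phi_2\in B_{p_2/s}$ are equivalent to $b_1, b_2>p'$. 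Picking $a$ slightly larger than $q$ and $b_1,b_2$ slightly larger than $p'$, the strict hypotheses in \eqref{eq101} leave enough room to also arrange $\beta a<n$, $\gamma_1 r b_1<n$, and $\gamma_2 s b_2<n$ (the latter two amount to $\gamma_i<(p-1)n/p_i$), which guarantees that $|x|^{-\beta a}$, $|x|^{-\gamma_1 r b_1}$, $|x|^{-\gamma_2 s b_2}$ are locally integrable on every cube.

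To verify the bump condition I use the standard estimate $\dashint_Q |x|^t\,dx\approx R_Q^t$ for $t>-n$, where $R_Q:=\max(|c_Q|,\ell(Q))$. This yields $\|u^{1/q}\|_{\psi,Q}\approx R_Q^{-\beta}$, $\|v_1^{-r/p_1}\|_{\phi_1,Q}^{1/r}\approx R_Q^{-\gamma_1}$, and $\|v_2^{-s/p_2}\|_{\phi_2,Q}^{1/s}\approx R_Q^{-\gamma_2}$. The balance equation $\al+\beta+\gamma_1+\gamma_2=n+n/q-n/p$ simplifies the scaling exponent to $|Q|^{(n-\al)/n+1/q-1/p}=\ell(Q)^{\beta+\gamma_1+\gamma_2}$, so the supremum in the hypothesis of Theorem \ref{thmE} collapses to
$$
\sup_Q \bigl(\ell(Q)/R_Q\bigr)^{\beta+\gamma_1+\gamma_2}\leqslant 1,
$$
where the final inequality uses $R_Q\geqslant \ell(Q)$ together with \eqref{eq103}. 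Theorem \ref{thmE} then yields \eqref{eq104}.

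The only delicate point is the parameter bookkeeping: the natural pick $r=p_1/p$, $s=p_2/p$ makes $(p_1/r)'=(p_2/s)'=p'$, which is precisely what turns the two integrability thresholds $\gamma_1 r b_1<n$, $\gamma_2 s b_2<n$ into the hypotheses $\gamma_i<(p-1)n/p_i$, and the $B_{q'}$ threshold into $\beta<n/q$; simultaneously the balance equation together with \eqref{eq103} reduces the residual cube-scaling to the trivially bounded quantity $(\ell(Q)/R_Q)^{\beta+\gamma_1+\gamma_2}\leqslant 1$. Beyond this arithmetic the argument is routine.
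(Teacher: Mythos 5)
Your proposal is correct and follows essentially the same route as the paper: realize the left side as $\int \mathsf{BI}_{n-\alpha}(F,G)\,h\,u^{1/q}$, dualize, and invoke Theorem~\ref{thmE} with $u=|x|^{-q\beta}$, $v_1=|x|^{\gamma_1 p_1}$, $v_2=|x|^{\gamma_2 p_2}$, $r=p_1/p$, $s=p_2/p$, using the power-weight average estimate and the balance equation to reduce the bump supremum to $(\ell(Q)/R_Q)^{\beta+\gamma_1+\gamma_2}\leqslant 1$.

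One small remark, in your favor: the paper's write-up only explicitly checks the unbumped power condition \eqref{eq21} (their display \eqref{eq105}), whereas Theorem~\ref{thmE} actually requires the Orlicz-bumped condition with $\bar\psi\in B_{q'}$, $\bar\phi_i\in B_{p_i/r}$ (and $\bar\phi_2\in B_{p_2/s}$). Your choice of $\psi(t)=t^a$, $\phi_i(t)=t^{b_i}$ with $a$ slightly above $q$ and $b_i$ slightly above $p'=(p_i/r)'=(p_i/s)'$, together with the observation that the strict inequalities in \eqref{eq101} leave exactly the room needed for $\beta a<n$ and $\gamma_i r b_i<n$, supplies the detail the paper elides. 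So your argument is not merely equivalent but a little more complete than the printed proof.
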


\begin{remark}
Condition \eqref{eq101} corresponds to the condition
$$
\beta <(1-q)\frac{n}{q}, \hspace{2mm} \gamma_1<\frac{n}{p_1}, \hspace{2mm} \gamma_2<\frac{n}{p_2}
$$
which is stated in \cite{K. Moen bilinear}. The interesting phenomenon here is that the factor $1-q$ (for the case $p\leqslant q\leqslant 1$) has become $p-1$ (for the case $1<p\leqslant q$). This may reveal some clues about the case $p\leqslant1<q$.
\end{remark}

\begin{remark}
If we think of the linear case just as a restriction of the bilinear one, then we can just drop $p_2$ and $\gamma_2$, and identify $p_1$ with $p$, $\gamma_1$ with $\gamma$. At that time, conditions \eqref{eq101}-\eqref{eq103} will become
\begin{equation*}
    \beta <\frac{n}{q}, \hspace{2mm} \gamma_1<\frac{n}{p'}
\end{equation*}
\begin{equation*}
    \alpha+\beta+\gamma=n+\frac{n}{q}-\frac{n}{p}
\end{equation*}
\begin{equation*}
    \beta+\gamma\geqslant0
\end{equation*}
which are exactly the needed conditions for the (linear) Stein-Weiss inequality to hold true.
\end{remark}

\begin{remark}
The inequality \eqref{eq104} can also be written as 
\begin{equation*}
   \int_{\mathbb{R}^n}\int_{\mathbb{R}^n}\frac{f(x) \thinspace g(y) \thinspace h\big(\frac{x+y}{2})}{|x-y|^\al \thinspace|x|^{\gamma_1} \thinspace|y|^{\gamma_2} \thinspace|x+y|^\beta} \thinspace dxdy \lesssim \|f\|_{p_1}  \thinspace \|g\|_{p_2}  \thinspace \|h\|_{q'}.
\end{equation*}
\end{remark}

\begin{proof} [Proof of Theorem \ref{bilinear Stein-Weiss}]
Inequality \eqref{eq104} is just a dualized form of the following inequality with some appropriate weight-scalings on the functions $f$ and $g$.

\begin{equation*}
\begin{split}
    & \left[\int_{\mathbb{R}^n}\Bigl(\bigl|\mathsf{BI}_{n-\alpha}(f,g)(x)\bigr||x|^{-\beta}\Bigr)^qdx\right]^\frac{1}{q} \\
    & \hspace{2cm} \lesssim \left[\int_{\mathbb{R}^n}\Bigl(|f(x)||x|^{\gamma_1}\Bigr)^{p_1}dx\right]^\frac{1}{p_1} \left[\int_{\mathbb{R}^n}\Bigl(|g(x)||x|^{\gamma_2}\Bigr)^{p_2}dx\right]^\frac{1}{p_2}.
\end{split}
\end{equation*}

We are going to apply theorem \ref{thmE} here, so we only need to check condition \eqref{eq21} with $u=|x|^{-q\beta}$, $v_1=|x|^{p_1\gamma_1}$, $v_2=|x|^{p_2\gamma_2}$, $r=\frac{p_1}{p}$ and $s=\frac{p_2}{p}$. To be clearer, we need to show that
\begin{equation} \label{eq105}
    \sup_Q |Q|^{\frac{n-\alpha}{n}+\frac{1}{q}-\frac{1}{p}}\left(\dashint_Q{|x|^{-\beta q}}\right)^{\frac{1}{q}} \left(\dashint_Q{|x|^{\frac{\gamma_1p_1}{p-1}}}\right)^{\frac{p-1}{p_1}} \left(\dashint_Q{|x|^{\frac{\gamma_2p_2}{p-1}}}\right)^{\frac{p-1}{p_2}} < \infty.
\end{equation}

\hfill

Now, for any cube $Q$, let $Q_0=Q\bigl(\text{O},\ell(Q)\bigr)$. We then either have $Q\cap Q_0=\emptyset$ or $Q\cap Q_0\neq\emptyset$. If $Q\cap Q_0=\emptyset$, then $|x|\sim|x|_\infty\geqslant\ell(Q)$ for all $x\in Q$. This implies that the left hand side of \eqref{eq105} is bounded by

$$
 \sup_Q  |\ell(Q)|^{n-\alpha+\frac{n}{q}-\frac{n}{p}}\thinspace|\ell(Q)|^{-\beta-\gamma_1-\gamma_2}=1.
$$

\hfill

If $Q\cap Q_0\neq\emptyset$, then $|x|\leqslant\sqrt{n}\thinspace|x|_\infty\leqslant2\sqrt{n}\thinspace\ell(Q)$ for all $x\in Q$. This implies that $Q\subset B=B\bigl(\text{O},2\sqrt{n}\thinspace\ell(Q)\bigr)$, and hence the left hand side of \eqref{eq105} is bounded by a constant. 
\end{proof}

Finally we end the paper with an example to show that our condition, condition \eqref{onevecwp=q}, on the weights for $\mathsf{BM}$ is more general than the known results $(w_1,w_2)\in A_p\times A_p$.  In fact we will show that there exists weights $(w_1,w_2)$ that satisfy \eqref{onevecwp=q} but $w_1\notin A_p$ and $w_2\notin A_p$. Here, we are going to give an example of weights $w_1$, and $w_2$ for this fact. Consider $w_1=|x|^\alpha$, and $w_2=|x|^\beta$. We shall prove

$$
K = \sup_Q\left(\dashint_Q|x|^{\frac{p\alpha}{p_1}+\frac{p\beta}{p_2}}\right)^\frac{1}{p} \left(\dashint_Q|x|^\frac{\alpha}{1-p}\right)^\frac{p-1}{p_1} \left(\dashint_Q|x|^\frac{\beta}{1-p}\right)^\frac{p-1}{p_2} < \infty.
$$

\hfill

For every cube $Q$, we have 2 situations: either $|c_Q|_\infty\leqslant2\ell(Q)$ or $|c_Q|_\infty>2\ell(Q)$.

If $|c_Q|_\infty\leqslant2\ell(Q)$, then
\begin{equation*}
\begin{split}
    K & \leqslant \ell(Q)^{-n} \left(\int_{B_0}|x|^{\frac{p\alpha}{p_1}+\frac{p\beta}{p_2}}\right)^\frac{1}{p}  \left(\int_{B_0}|x|^\frac{\alpha}{1-p}\right)^\frac{p-1}{p_1} \left(\int_{B_0}|x|^\frac{\beta}{1-p}\right)^\frac{p-1}{p_2} \\
    & \approx \ell(Q)^{-n+\frac{\alpha}{p_1}+\frac{\beta}{p_2}+\frac{n}{p}+\frac{n(p-1)-\alpha}{p_1}+\frac{n(p-1)-\beta}{p_2}} = 1
\end{split}
\end{equation*}
where $B_0=B\bigl(3\sqrt{n}\thinspace\ell(Q)\bigr)$, and whenever $\alpha<n(p-1)$, $\beta<n(p-1)$, $-n<\frac{p\alpha}{p_1}+\frac{p\beta}{p_2}$.

If $|c_Q|_\infty>2\ell(Q)$, then $|x|\sim|x|_\infty\sim|c_Q|_\infty\sim|c_Q|$ and hence
\begin{equation*}
    K \approx |c_Q|^{\frac{\alpha}{p_1}+\frac{\beta}{p_2}-\frac{\alpha}{p_1}-\frac{\beta}{p_2}} = 1.
\end{equation*}

These mean that $K<\infty$ whenever $\alpha<n(p-1)$, $\beta<n(p-1)$, $-n<\frac{p\alpha}{p_1}+\frac{p\beta}{p_2}$. So, we may have $\alpha$ get close to $-n(1+p_1-p)$, which is less than $-n$, as long as $\beta<n(p-1)$. Similarly, we may have $\beta$ get close to $-n(1+p_2-p)$, which is less than $-n$, as long as $\alpha<n(p-1)$. This fact provides a wider range for $\alpha$ and $\beta$ because the $A_p\times A_p$ requires $-n<\alpha,\beta<n(1-p)$.


\bibliographystyle{plain}

\hfill

\end{document}